\pgfplotsset{compat=1.18}
\DeclareSymbolFont{cyrletters}{OT2}{wncyr}{m}{n}
\DeclareMathSymbol{\Sha}{\mathalpha}{cyrletters}{"58}
\newcolumntype{C}{>{$}c<{$}}
\newcounter{NoTableEntry}
\renewcommand*{\theNoTableEntry}{NTE-\the\value{NoTableEntry}}
\newcommand{\genlegendre}[4]{%
  \genfrac{(}{)}{}{#1}{#3}{#4}%
  \if\relax\detokenize{#2}\relax\else_{\!#2}\fi
}
\theoremstyle{plain}
\newtheorem{theorem}{Theorem}[section]
\newtheorem{corollary}[theorem]{Corollary}
\newtheorem{lemma}[theorem]{Lemma}
\newtheorem{proposition}[theorem]{Proposition}
\theoremstyle{definition}
\newtheorem{remark}[theorem]{Remark}
\numberwithin{equation}{section}
\newdimen\plusheight
\def\+{\;\lower\plusheight\hbox{$+$}\;}
\newdimen\minusheight
\def\-{\;\lower\minusheight\hbox{$-$}\;}
\newdimen\cdotsheight
\def\cds{\lower\cdotsheight\hbox{$\cdots$}}
\def\C{\mathbb{C}}
\begin{document}
\title{Central $L$ values of congruent number elliptic curves}

\author{Xuejun Guo}
\author{Dongxi Ye}
\author{Hongbo Yin}

\address{
\parbox[t]{15cm}{School of Mathematics, Nanjing University, Nanjing 210093, People’s Republic of China}}
\email{guoxj@nju.edu.cn}

\address{
\parbox[t]{15cm}{School of Mathematics (Zhuhai), Sun Yat-sen University, Zhuhai 519082, Guangdong,
People's Republic of China
}
}
\email{yedx3@mail.sysu.edu.cn}

\address{
\parbox[t]{15cm}{%  % Adjust width as needed
    School of Mathematics, Shandong University, Jinan, Shandong, 250100, People's Republic of China\\
 State Key Laboratory of Cryptography and Digital Economy Security, Shandong University, Jinan, 250100, People's Republic of China
  }%
}
\email{yhb@sdu.edu.cn}

% \address{Department of Mathematics, University X \\ 123 Math Lane \\ City, Country}
% \email{john.doe@universityx.edu}

\subjclass[2000]{11G40, 11F67}
\keywords{Congruent number problem, elliptic curve, central value of $L$-function, CM value of theta function}

\thanks{The first author was supported by the Natural Science Foundation of China (Grant No. 12231009). The second author was supported by the Guangdong Basic and Applied Basic Research Foundation (Grant No. 2024A1515030222). The third author was supported by the National Key Research and Development Program of China (Grant No.2021YFA1000700) and the Natural Science Foundation of
 Shandong Province (Grant No.ZR2023MA001)}

\begin{abstract}
Let  $E_n$  be the congruent number elliptic curve $y^2=x^3-n^2x$, where $n$ is square-free and not divisible by primes $p\equiv 3\pmod 4$. In this paper, we prove that
$L(E_n,1)$ can be expressed as the square of CM values of some simple theta functions, generalizing two classical formulas of Gauss. Our result is meaningful  in both theory and practical computation. 
\end{abstract}

\maketitle
\allowdisplaybreaks

\section{Introduction}
In October 1798, Gauss proved the following  formulas in his study of the arithmetic-geometric mean,

\begin{align}
\sqrt{\frac{\varpi}{\pi}}& = 1 - 2e^{-\pi} + 2e^{-4\pi} - 2e^{-9\pi} + \cdots\text{,} \label{eq:series1}\\
\sqrt{\frac{\varpi}{\pi}}& =2\left(e^{-\frac{1}{4}\pi} + e^{-\frac{9}{4}\pi} +e^{-\frac{25}{4}\pi} + \cdots\right)\text{,} \label{eq:series2} 
\end{align}
where 
\[
\varpi=2 \int_0^1 \frac{\mathrm{~d} t}{\sqrt{1-t^4}} 
\] is the lemniscate constant (see page 418 of \cite{Gauss1}).
 By computing only  the first three terms of equation (\ref{eq:series2}), Gauss was able to determine $\varpi$ with an accuracy of up to 26 decimal places.
See David A. Cox's paper \cite{Cox1984} for an excellent account of Gauss's work on the arithmetic-geometric mean.

It is not hard to see that the right-hand side of equation (\ref{eq:series1}) is a special value of the Jacobi theta function $\theta(\tau)=\sum_{n\in \mathbb{Z}}e^{2\pi i n^2\tau}$ at $\tau=(i+1)/2$, while the right-hand side of equation (\ref{eq:series2}) is a special value of $\theta_{\chi_4}(\tau)=\sum_{n\in \mathbb{Z}}\chi_4(n)e^{2\pi i n^2\tau}$ at $\tau=i/8$, where $\chi_4$ is the trivial character with conductor $4$.
Let $E_n$ be the elliptic curve defined by $y^2=x^3-n^2x$, and let $L(E_n,s)$ be the Hasse-Weil $L$-function of   $E_n$. 
In modern language, Gauss's formulas are equivalent to the following equations.

\begin{align}
2\sqrt{\frac{L(E_1,1)}{\pi}}&=\theta\left(\frac{i+1}{2}\right)\text{,} \label{eq1.3}\\
2^{\frac{7}{4}}\sqrt{\frac{L(E_2,1)}{\pi}}&=\theta_{\chi_4}\left(\frac{i}{8}\right)\text{.}\label{eq1.4}
\end{align} 
In this paper, we will generalize Gauss's  formulas to all $E_n$ with $n$ square-free and not divisible by primes congruent to $3$ modulo $4$. %These can also be viewed as a complement to the seminal work \cite{V2} of Rodriguez-Villegas.

For a positive integer~$n$, assumed to be square-free throughout the remainder of this paper, let
%$E_{n}: y^{2}=x^{3}-n^{2}x $
%be the $n$-th congruent number elliptic curve, 
$\chi_{n}(\cdot)=\left(\frac{\cdot}{n}\right)$ be the Kronecker symbol, and let $$
\theta_{\chi_n}(\tau)=\sum_{k\in \mathbb{Z}}\chi_n(k)e^{2\pi i k^2\tau}.
$$

\begin{theorem}\label{Theorem1.1}
Let  $m=\prod_{i=1}^s p_i$ with  each prime $p_i\equiv1\pmod{4}$, $b$  an even integer such that $b^2\equiv -1\pmod{m^2}$, $\tau_m=\frac{b+i}{2
m^2}$, $\tau'_{m}=\frac{b+m^2+i}{2
m^2}$. Then 

    \[
L(E_n,1)=\begin{cases}\displaystyle{\frac{\pi \left\lvert\theta_{\chi_{n}}\left(\tau_n\right)\right\rvert^2}{4\sqrt{2}n }},\ &\text{if }\ n=m;\\[1.5ex]
\displaystyle\frac{\pi \left\lvert\theta_{\chi_{n/2}}(\tau_{n/2}')\right\rvert^2}{\sqrt{2}n }  ,\ &\text{if }\ n=2m .
\end{cases}
\]
In particular, $L(E_n,1)=0$ if and only if $\theta_{\chi_{n}}\left(\tau_n\right)=0$ when $n$ is odd,  or $\theta_{\chi_{n/2}}(\tau'_{n/2})=0$  when $n$ is even.
\end{theorem}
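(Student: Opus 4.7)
The plan is to exploit the CM structure of $E_n$ by $\mathbb{Z}[i]$ and match $L(E_n,1)$ with $|\theta_{\chi_n}(\tau_n)|^2$ through a lattice-sum identity. Because $E_n$ has complex multiplication by $\mathbb{Z}[i]$, the theory of CM elliptic curves gives $L(E_n,s) = L(s,\psi_n)$ for a Hecke character $\psi_n$ on $K = \mathbb{Q}(i)$ of infinity type $(1,0)$ with conductor dividing $(2n)$. The hypothesis that no prime $\equiv 3\pmod 4$ divides $n$ ensures that every odd prime of $n$ splits in $K$, so that on principal ideals $\psi_n((\alpha)) = \alpha\cdot\chi^{K}(\alpha)$ for an explicit finite-order character $\chi^{K}$ of $(\mathcal{O}_K/2n)^\times$. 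The first task is to verify, for $\alpha = k+i\ell$ coprime to $2n$, a factorization
\[
\chi^{K}(\alpha) \;=\; \chi_n(k)\,\chi_n(\ell)\,\varepsilon(\alpha),
\]
where $\varepsilon$ is a root-of-unity character determined locally at the prime $2$; this identity is exactly what singles out the rational theta function $\theta_{\chi_n}$ among all weight-$1/2$ theta series attached to $\psi_n$.

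Next I would use the Mellin representation $L(E_n,1) = 2\pi\int_0^\infty f_n(iy)\,dy$ for the weight-$2$ CM newform $f_n$ of level $32n^2$, split the integral at a symmetric point, and apply the functional equation of $f_n$. This converts the central value into a rapidly convergent lattice sum of the shape
\[
L(E_n,1) \;=\; c_n \sum_{\alpha \in \mathcal{O}_K\setminus\{0\}} \frac{\psi_n(\alpha)}{N\alpha}\, W\!\bigl(N\alpha/n^2\bigr),
\]
where $W$ is an explicit incomplete-gamma weight; since $\psi_n$ has infinity type $(1,0)$, pairing $\alpha$ with $\bar\alpha$ produces a positive real quantity of the expected $|\cdot|^2$ shape.

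The heart of the proof is then to expand
\[
|\theta_{\chi_n}(\tau_n)|^2 \;=\; \sum_{k,\ell \in \mathbb{Z}} \chi_n(k)\chi_n(\ell)\, e^{\pi i b(k^2-\ell^2)/n^2}\, e^{-\pi(k^2+\ell^2)/n^2},
\]
to substitute $\alpha = k+i\ell$ so that $N\alpha = k^2+\ell^2$, and to use the congruence $b^2 \equiv -1 \pmod{n^2}$ to identify $\chi_n(k)\chi_n(\ell)\,e^{\pi i b(k^2-\ell^2)/n^2}$ with the Hecke character value $\psi_n(\alpha)/\alpha$ (up to a unit). Matching with the lattice sum of the second step, and tracking $|\mathcal{O}_K^\times|=4$ together with the lattice covolume $\sqrt{|\mathrm{disc}\,K|} = 2$, yields $\pi/(4\sqrt{2}n)$ in the odd case. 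For $n = 2m$ even, the shift $\tau_n \to \tau'_{n/2} = \tau_{n/2}+1/2$ introduces the phase $e^{\pi i k^2} = (-1)^k$, which is precisely what cancels the $2$-local part $\varepsilon$ of $\chi^{K}$; this explains both why the underlying character reduces to $\chi_{n/2}$ and why the constant becomes $\pi/(\sqrt{2}n)$.

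The main obstacle is the matching in the third step: converting the rational quadratic exponential $\chi_n(k)\chi_n(\ell)\, e^{\pi i b(k^2-\ell^2)/n^2}$ into the Hecke character value $\psi_n(\alpha)/\alpha$ requires a delicate Gauss-sum analysis at each prime dividing $n$, and is where the congruence $b^2 \equiv -1 \pmod{n^2}$, the evenness of $b$, and the case distinction between $n = m$ and $n = 2m$ all play their role together. A secondary obstacle is carefully tracking the powers of $\sqrt{2}$ and of $n$ coming from the level $32n^2$ of $f_n$ through the Mellin-and-functional-equation calculation, so as to recover exactly the stated normalizations.
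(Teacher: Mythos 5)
There is a genuine gap at what you yourself call the heart of the proof, and it is not merely a matter of ``delicate Gauss-sum analysis.'' Your step two produces a lattice sum whose summand is $\psi_n(\alpha)/N\alpha$ times an exponential weight; since $\psi_n((\alpha))=\alpha\cdot\chi^{K}(\alpha)$, this summand carries a factor $1/\bar\alpha$, i.e.\ it is a weight-one (antiholomorphic Eisenstein-type) term whose absolute value decays like $|\alpha|^{-1}e^{-cN\alpha}$. The expansion of $|\theta_{\chi_n}(\tau_n)|^2$ you write in step three has summands of absolute value $e^{-\pi(k^2+\ell^2)/n^2}$ with no $1/|\alpha|$ factor, and moreover the exponential rates do not agree (for the weight-$2$ form of level $32n^2$ the natural rate is $2\pi/(4\sqrt 2\,n)$ per unit of $N\alpha$, versus $\pi/n^2$ in the theta square). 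So the substitution $\alpha=k+i\ell$ cannot identify the two sums term by term for any choice of the finite-order character: the identity $L(E_n,1)=c\,|\theta_{\chi_n}(\tau_n)|^2$ is not a rearrangement of a single lattice sum but a genuine analytic identity between a weight-one object and a product of two weight-$\frac12$ theta series. The paper supplies exactly this missing ingredient: after writing $L(E_n,1)$ as a linear combination of CM values of the binary theta function $\Theta(n'\tau_{\mathcal A}/2)$ over ray classes (via the Hecke trick, Lemmas~\ref{eisenstein1}--\ref{thetatheta} and Proposition~\ref{main1}), it invokes the Rodriguez-Villegas--Zagier factorization lemma in Rosu's form (Lemma~\ref{rosulem} and Corollaries~\ref{cordd}, \ref{cortheta}) to factor each such CM value into a bilinear expression $\sum_r \theta_{[ar/D,1/2]}(\tau_{\mathcal A^2\mathcal A_1})\overline{\theta_{[r/D,1/2]}(\tau_{\mathcal A_1})}$ in unary theta values. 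Without an analogue of this lemma your step three has no mechanism to produce the $|\cdot|^2$ structure.

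A second, related omission: your proposal never confronts the sum over the ray class group $J_{\mathfrak m}^*/J_{\mathfrak m,\mathbb Z}^*$, which is nontrivial for $n>2$ even though $\mathbb Z[i]$ has class number one. In the paper the square arises only after (i) choosing representatives with $n_a\equiv 1\pmod{4n}$ (Lemma~\ref{lemna}), (ii) using Shimura reciprocity (Lemma~\ref{shimuraff}) to show $f_{r,n}(\tau_{\mathcal A})=f_{r,n}(b+i)$ is independent of the class, and (iii) observing that the resulting double sum $\sum_a\sum_r\chi_n(ar)f_{ar,n}\,\overline{\chi_n(r)f_{r,n}}$ collapses to $\bigl|\sum_r\chi_n(r)f_{r,n}\bigr|^2$. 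Your plan for the even case (the shift by $1/2$ cancelling the $2$-local character and changing the constant from $\pi/(4\sqrt2 n)$ to $\pi/(\sqrt2 n)$) is a plausible heuristic for the final bookkeeping, but it presupposes the square structure that the proposal does not actually establish.
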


% \begin{proof}
%     \begin{align*}
%         \theta_{\chi_{m}}(\tau_{m}')&=\sum \chi_{m}(k)q(\tau_{m}')^{k^{2}}\\
%         &=\sum \chi_{m}(k)(-1)^{k}q(\tau_{m})^{k^{2}}\\
%         &=\sum\chi_{m}(2k)q(\tau_{m})^{(2k)^{2}}-\sum\chi_{m}(2k+1)q(\tau_{m})^{(2k+1)^{2}}.
%     \end{align*}
%     \begin{align*}
%         \theta_{\chi_{4m}}(\tau_{m}/4)&=\sum \chi_{4m}(k)q(\tau_{m}/4)^{k^{2}}\\
%         &=\sum \chi_{m}(2k+1)q(\tau_{m}/4)^{(2k+1)^{2}}
%         %&=\sum\chi_{m}(2k)q(\tau_{m})^{(2k)^{2}}-\sum\chi_{m}(2k+1)q(\tau_{m})^{(2k+1)^{2}}.
%     \end{align*}
% \end{proof}

Note that in the case $n=2m$, we also have $\left|\theta_{\chi_{n/2}}(\tau'_{n/2})\right|={\left|\theta_{\chi_{2n}}\left(\tau_{n/2}/4\right)\right|} $ (see Remark~\ref{rem1}). 
If $n=1$, then we can take   $b=0$. By Theorem \ref{Theorem1.1}, we have \[
L(E_1,1)=\frac{\pi \left|\theta\left(\frac{i}{2
}\right)\right|^2}{4\sqrt{2} }
\]
which reduces to  Gauss's  formula by applying 
  the identity $ \theta^2\left(\frac{i}{2}\right)= \sqrt{2}\theta^2\left(\frac{1+i}{2}\right)$. If $n=2$, then $m=1$, and $b=1$. Then Theorem~\ref{Theorem1.1} gives
\[
L(E_2,1)=\frac{\pi \left|\theta_{\chi_4}\left(\frac{1+i}{8
}\right)\right|^2}{2\sqrt{2} }
\]
which reduces to  Gauss's  formula by applying 
  the identity $ \theta_{\chi_4}\left(\frac{i}{8}\right)= \theta_{\chi_4}\left(\frac{1+i}{8}\right)$. 
  %Theorem~\ref{Theorem1.1} may be %compared with Tunnell’s Theorem 3 in %\cite{Tunnell}.

Writing  central values of $L$-functions as special values of special functions has a long history, see for example \cite{H,S1,S2,R1}. Rodriguez-Villegas \cite{V1,V1.5} first related the central values of Hecke $L$-functions of imaginary quadratic fields to the square of CM values of half weight theta functions. As an application, he obtained an effective way to compute the predicted order of the Tate-Shafarevich group of the Gross elliptic curves. His  proof contains two main ingredients. One is Hecke's formula expressing central $L$-values as a sum of values of binary theta functions, which is in fact a consequence of Kronecker's limit formula. The other is a factorization lemma that expresses
  the CM values of the binary theta functions as the product of CM values of the  theta functions. This is also the method adopted in this paper and many other works \cite{V2,V3,V4} that yield similar results. Using a different method, i.e., the theta lifting for the reductive pair $(U(1),U(1))$ developed in \cite{Y2}, Rodriguez-Villegas and Yang \cite{V4} generalized these works to some special Hecke characters of more general CM fields. In the subsequent work, Yang and his collaborators \cite{Y3,Y4,Y5} further strengthened the results in this direction.

It is worth noting that all the aforementioned studies assume the imaginary quadratic field $K$ to have discriminant $D<-4$. The cases $D=-3,-4$ are very special in CM theory as their ring of integers has more units. This always causes extra subtleties when one investigates relevant topics. For example, the modular curve corresponding to the theta function $\theta_{\chi_n}$
is $X_0(4n^2)$. In order to avoid technical complexity of the computation of theta lifts at dyadic primes, most of the previous research assume  the ``Heegner Hypothesis" which requires the prime factors of $4n^2$ to be split in $K=\mathbb{Q}(\sqrt{-D})$. Hence, the case $D=-4$ is always excluded. However, in her recent beautiful work \cite{R}, Rosu deals with $D=-3$ case and succeeds in expressing the central $L$-values of the cube sum elliptic curve $A_n: x^3+y^3=n$ as the square of CM values of theta functions. This has arithmetic significance, as $A_n$ is closely connected  to the classical Diophantine problem of writing an integer as the sum of two rational cubes. In this paper, we consider the very last case $D=-4$ and deal with the congruent elliptic curve which is closely related to the  famous congruent number problem in arithmetic. What is interesting is that, compared to previous works, the expression in our situation is significantly
  simpler. 

% by carefully working out highly unapparent relations between the representatives of the associated ray class subgroup, theta functions, and the aforementioned factorization lemma due to Rodriguez-Villegas and Zagier. 

Theorem \ref{Theorem1.1} has several potential applications. Recall that a positive integer is called a congruent number if it is the area of a  right triangle with  rational side lengths.  The determination of which integers are congruent numbers or not originates from an Arabic manuscript in 972 CE and is one of the oldest unsolved arithmetic problems in number theory. It is conjectured that all integers $n\equiv 5,6,7\pmod{8}$ are congruent numbers while $100\%$ integers $n\equiv 1,2,3\mod 8$ are not, see \cite{TYZ}. The second part of the conjecture has been proved by Burungalea and Tian proved in \cite{BT2}.  
For the diverse results on congruent number see \cite{BT2,BT,T, TYZ,Smith,Khanra,Tunnell,Qin}. We just mention that all primes $p\equiv 5,7\mod 8$ are proved to be congruent numbers while all primes $p\equiv 3\mod 8$ are not. But for primes $p\equiv 1 \mod 8$, we still do not know whether there are infinitely many congruent numbers. It is well known that $n$ is a congruent number if and only if $E_n$ has infinitely many rational points if and only if $L(E_n,s)=0$, assuming the Birch and Swinnerton-Dyer (BSD) conjecture is true. Our formulae can be used effectively to predict which integers are congruent numbers and to compute the predicted orders of the Tate-Shafarevich groups of the rank 0 congruent number elliptic curves. Note that by the result in \cite{Tunnell,Qin}, we know that if our theta value is less than a bound, then it should be zero, for details see Proposition \ref{prop8.4}. Beside this, we will show another application in the study of the zeros of theta functions. If the number $n$ in Theorem \ref{Theorem1.1} satisfies $n\equiv 5\pmod 8$, then $L(E_n,1)=0$ due to the fact that the root number of $E_n$ is $-1$. By Theorem \ref{Theorem1.1}, this implies that $\theta_{\chi_n}(\tau_n)=0$. Furthermore, if  $n\equiv 1\pmod 8$ and  $L(E_n,1)=0$, then the order of vanishing  
of  $\theta_{\chi_n}(\tau)$  at $\tau_n$ is at least $2$.

The remainder of this paper is organized as follows. In Sections~\ref{charchi} and~\ref{classfield}, we review  some basic properties of the Hecke character of $E_{n}$ and its associated class fields. Upon these, we decompose $L(E_{n},s)$ in terms of CM values of non-holomorphic Eisenstein series and consequently  express $L(E_{n},1)$ as a linear combination of CM values of the  theta function in Section~\ref{jacobitheta}. As a byproduct, we can indeed express $L(E_{n},1)$ as a Galois trace of some CM value of the  theta function for any $n$. In Section~\ref{factorlem}, we review a factorization formula, which, according to Rodriguez-Villegas \cite{V1}, in fact originates from a somewhat forgotten result of Kronecker.
 Making use of the factorization formula we are able to ``factorize'' the CM values of the theta function in terms of CM values of some particular unary theta functions which will be a key ingredient in our proof of Theorem~\ref{Theorem1.1}. After these, the proof of Theorem~\ref{Theorem1.1} will be established in Section~\ref{square}. Finally, in Section~\ref{mock} we conclude  with a discussion on potential applications of Theorem~\ref{Theorem1.1} to finding mock Heegner zeros of some Dirichlet theta functions.

{\bf Acknowledgment.} The authors are grateful to Tonghai Yang for many helpful and insightful discussions related to this work. The first author also thanks the University of Wisconsin–Madison for its hospitality and support during the completion of this project.
 
% \begin{conjecture}[BSD]
% Fix a $p\equiv1\pmod{4}$.
%     $$
%     \frac{L^{(r)}(1,E_{n})}{r!\Omega_{E_{n}} c_{E_{n}}}=\frac{|{\rm Sha}(E_{n})|R_{E_p}}{|E_{n}(\mathbb{Q})_{tor}|^{2}},
%     $$
%     where
%     $$
%     \Omega_{E_{n}}=,
%     $$
%     $$
%     c_{E_{n}}=,
%     $$
%     $$
%     |E_{n}(\mathbb{Q})_{tor}|^{2}=.
%     $$
% \end{conjecture}

%We have\[
% L(E_1,1)=\frac{\sqrt{2}}{8}\pi \theta^2\left(\frac{i}{2}\right),
% \]
% where $\theta(\tau)=\sum_{n\in \mathbb{Z}}e^{2\pi i n^2\tau}$. Hence \[
% \Omega_{E_{1}}=8 L(E_{1},1)={\sqrt{2}}\pi \theta^2\left(\frac{i}{2}\right)=\pi \theta^2\left(\frac{i+1}{2}\right).
% \]
% For any positive integer $n$,
%  \[
% \Omega_{E_{n}}=\frac{{\sqrt{2}}\pi \theta^2\left(\frac{i}{2}\right)}{\sqrt{n}}.
% \]

% So if $r=0$, then $$\frac{L(E_{n},1))}{\Omega_{E_n} }=
%     \frac{\sqrt{n}L(E_{n},1)}{2{{}}\pi \theta^2\left(\frac{1+i}{2}\right) }=\frac{c_{E_n}|{\rm Sha}(E_{n})| {}}{|E_{n}(\mathbb{Q})_{tor}|^{2}},\qquad \frac{L(E_n,1)}{\Omega_{E_n}}=\frac{\left|\theta_{\chi_{n}}\left(\frac{b+i}{2
% n^2}\right)\right|^2}{ 8\sqrt{n}\theta^2\left(\frac{i}{2}\right)}.
%     $$
% Hence$$\frac{L(E_{n},1))}{\Omega_{E_n} }=\frac{c_{E_n}|{\rm Sha}(E_{n})| {}}{16},\qquad \frac{L(E_n,1)}{\Omega_{E_n}}=\frac{{\rm Tr}_{H_{\mathfrak{m}}/K}(\alpha)}{ 8}=\frac{\left({\rm Tr}_{H_{\mathfrak{m}}/K}(\beta)\right)^2}{ 8}.
% $$

\section{The Hecke character $\tilde{\chi}_{n}$}\label{charchi}

Let $n$ be a positive integer such that $n=2^{e}m$ with $e=0$ or $1$, and $m$ being odd. Fix $K=\mathbb{Q}(i)$, $\mathcal{O}_{K}=\mathbb{Z}[i]$, and $$
\mathfrak{m}=\begin{cases}
    (2+2i)n\mathcal{O}_{K},&\mbox{if $n$ is odd;}\\
    2n\mathcal{O}_{K},&\mbox{otherwise.}
\end{cases}
$$
% Note that
% $$
% \left(\mathcal{O}_{K}/(2+2i)\right)^{\times}=\{\pm1,\pm i\}.
% $$
Denote by $J_{\mathfrak{m}}^{*}$ the multiplicative group generated by the integral ideals of $\mathcal{O}_{K}$ coprime to $\mathfrak{m}$, and denote by $J_{\mathfrak{m},\mathbb{Z}}^{*}$ its subgroup generated by the principal integral ideals $\alpha\mathcal{O}_{K}$ with 
$$
\alpha\equiv \begin{cases}
    1\pmod{(2+2i)\mathcal{O}_{K}}\\
    \ell\pmod{n\mathcal{O}_{K}}
\end{cases}\quad\mbox{if $n$ is odd,}
$$
or
$$
\alpha\equiv \begin{cases}
    1\pmod{4\mathcal{O}_{K}}\\
    \ell\pmod{m\mathcal{O}_{K}}
\end{cases}\quad\mbox{if $n=2m$,}
$$
for some integer~$\ell$ coprime to~$n$. Note that one has the identification
$$
J_{\mathfrak{m}}^{*}/J_{\mathfrak{m},\mathbb{Z}}^{*}\cong\begin{cases}
(\mathbb{Z}/n\mathbb{Z})^{\times},&\mbox{if $n$ is odd;}\\
(\mathbb{Z}/2n\mathbb{Z})^{\times}=\mathbb{Z}/2\mathbb{Z}\times(\mathbb{Z}/m\mathbb{Z})^{\times},&\mbox{if $n=2m$ with $m$ odd,}
\end{cases}
%\left(\cong\left(\mathbb{Z}/m\mathbb{Z}\right)^{\times}\quad\mbox{if $n=2m$}\right).
$$
and the following technical lemma, whose proof is left to the reader.

\begin{lemma}\label{lemalpha}
\begin{enumerate}
    \item When $n$ is odd,  for a class $[\mathcal{A}]\in J_{\mathfrak{m}}^{*}/J_{\mathfrak{m},\mathbb{Z}}^{*}$, one can always pick a representative $\mathcal{A}=\alpha_{\mathcal{A}}\mathcal{O}_{K}$ with $\alpha_{\mathcal{A}}\equiv1\pmod{4\mathcal{O}_{K}}$.

   \item  When $n=2m$ with $m$ being odd, for a class $[\mathcal{A}]\in J_{\mathfrak{m}}^{*}/J_{\mathfrak{m},\mathbb{Z}}^{*}$ such that  $\mathcal{A}=\alpha_{\mathcal{A}}\mathcal{O}_{K}$ with $\alpha_{\mathcal{A}}\equiv1\pmod{4\mathcal{O}_{K}}$, there is different class $[\mathcal{B}]$ from $[\mathcal{A}]$ such that $\mathcal{B}=\alpha_{\mathcal{B}}\mathcal{O}_{K}$ with $\alpha_{\mathcal{B}}\equiv3+2i\pmod{4\mathcal{O}_{K}}$ and $\alpha_{\mathcal{B}}\equiv \alpha_{\mathcal{A}}\pmod{m\mathcal{O}_{K}}$. So if one fixes a 
    $$
    \delta\equiv\begin{cases}
        3+2i\pmod{4\mathcal{O}_{K}}\\
        1\pmod{m\mathcal{O}_{K}},
    \end{cases}
    $$
    then $J_{\mathfrak{m}}^{*}/J_{\mathfrak{m},\mathbb{Z}}^{*}=\{[\mathcal{A}],[\delta\mathcal{A}]:\, \alpha_{\mathcal{A}}\equiv1\pmod{4\mathcal{O}_{K}}\}$.
 \end{enumerate}
   
\end{lemma}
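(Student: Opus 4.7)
The plan is to reduce both assertions to a finite computation in $(\mathcal{O}_K/4\mathcal{O}_K)^{\times}$, exploiting that $K=\mathbb{Q}(i)$ has class number one (so every class $[\mathcal{A}]\in J_{\mathfrak{m}}^{*}/J_{\mathfrak{m},\mathbb{Z}}^{*}$ admits a principal generator) and the Chinese remainder theorem to decouple the residue modulo $4\mathcal{O}_{K}$ from the congruence modulo the odd part $n$ (or $m$). The ambient freedom in choosing a generator of a fixed class consists of (i) multiplication by a unit $u\in\mathcal{O}_K^{\times}=\{\pm 1,\pm i\}$ and (ii) multiplication of the ideal by some $\beta\mathcal{O}_K\in J_{\mathfrak{m},\mathbb{Z}}^{*}$, so the crux is to describe the image of the combined action on the group $(\mathcal{O}_K/4\mathcal{O}_K)^{\times}$, which has order~$8$.

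For part~(1) I will use that $(2+2i)\mathcal{O}_K=(1+i)^{3}\mathcal{O}_K\supsetneq(1+i)^{4}\mathcal{O}_K=4\mathcal{O}_K$, so the condition $\beta\equiv 1\pmod{(2+2i)\mathcal{O}_K}$ leaves a residual freedom of order~$2$ modulo $4\mathcal{O}_K$. A direct check shows that $(2+2i)\gamma$ is congruent to $0$ or $2+2i$ modulo $4\mathcal{O}_K$ according to the parity of $\operatorname{Re}\gamma+\operatorname{Im}\gamma$; hence $\beta\pmod{4\mathcal{O}_K}\in\{1,\,3+2i\}$, and together with the four units the eight products $u\beta$ cover all of $(\mathcal{O}_K/4\mathcal{O}_K)^{\times}$. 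Because $n$ is odd, CRT also lets me arrange $\beta\equiv 1\pmod{n\mathcal{O}_K}$, so $\beta\mathcal{O}_K\in J_{\mathfrak{m},\mathbb{Z}}^{*}$, and I can solve $u\gamma\beta\equiv 1\pmod{4\mathcal{O}_K}$ for any generator $\gamma$ of $\mathcal{A}$ and set $\alpha_{\mathcal{A}}=u\gamma\beta$.

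For part~(2) the condition on $\beta$ is now $\beta\equiv 1\pmod{4\mathcal{O}_K}$ outright, so only the unit action survives modulo~$4$. The four-element orbits of $\mathcal{O}_K^{\times}$ on $(\mathcal{O}_K/4\mathcal{O}_K)^{\times}$ are precisely $\{1,3,i,3i\}$ and $\{3+2i,1+2i,2+3i,2+i\}$, so every class admits a generator congruent modulo $4\mathcal{O}_K$ either to $1$ or to $3+2i$; if a type-$1$ class $[\mathcal{A}]$ and a type-$(3+2i)$ class $[\mathcal{B}]$ coincided, then $3+2i$ would have to be congruent to a unit modulo $4\mathcal{O}_K$, a contradiction. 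Given $[\mathcal{A}]$, CRT produces an element $\alpha_{\mathcal{B}}$ satisfying the prescribed congruences (automatically coprime to $\mathfrak{m}$), and similarly an element $\delta$ with $\delta\equiv 3+2i\pmod{4\mathcal{O}_K}$ and $\delta\equiv 1\pmod{m\mathcal{O}_K}$; since $(3+2i)^{2}\equiv 1\pmod{4\mathcal{O}_K}$, multiplication by $\delta$ is an involution on $J_{\mathfrak{m}}^{*}/J_{\mathfrak{m},\mathbb{Z}}^{*}$ swapping the two types of representatives, which yields the displayed enumeration.

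The only mild subtlety is to check, when solving the various CRT systems, that the resulting element actually lies in $J_{\mathfrak{m},\mathbb{Z}}^{*}$ in part~(1) and truly represents a new class in part~(2); both conclusions follow routinely from the modulo-$4\mathcal{O}_K$ bookkeeping described above, the only real content being the contrast between the two cases according as $\beta$ contributes an extra factor of $2$ modulo~$4\mathcal{O}_K$ or not.
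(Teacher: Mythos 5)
The paper gives no proof of this lemma (it is explicitly ``left to the reader''), so there is nothing to compare against; your argument is correct and is the routine verification the authors evidently intend. The key computations all check out: the condition $\beta\equiv 1\pmod{(2+2i)\mathcal{O}_K}$ indeed forces $\beta\equiv 1$ or $3+2i\pmod{4\mathcal{O}_K}$ because $(2+2i)\mathcal{O}_K=(1+i)^{3}\mathcal{O}_K$ has index $2$ over $4\mathcal{O}_K=(1+i)^{4}\mathcal{O}_K$; combined with the four units these residues cover all eight elements of $(\mathcal{O}_K/4\mathcal{O}_K)^{\times}$ (giving part~(1)), whereas the units alone split that group into the two orbits $\{1,3,i,3i\}$ and $\{3+2i,1+2i,2+i,2+3i\}$ (giving the dichotomy, the distinctness of $[\mathcal{A}]$ and $[\mathcal{B}]$, and, via $(3+2i)^{2}\equiv 1\pmod{4\mathcal{O}_K}$, the involution by $\delta$ in part~(2)), while the CRT steps are unproblematic since $4$ and the odd modulus are coprime.
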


% \begin{proof}

% Embed $(\mathbb{Z}/n'\mathbb{Z})^{\times}$ into $(\mathcal{O}_{K}/n'\mathcal{O}_{K})^{\times}$ by $[\ell]\to[\ell]$. Suppose that $\alpha_{\mathcal{A}}\equiv\alpha\pmod{n'}$ for some  $[\alpha]\in (\mathcal{O}_{K}/n'\mathcal{O}_{K})^{\times}/(\mathbb{Z}/n'\mathbb{Z})^{\times}$.
%     Note that $n'-4\frac{n'-1}{4}=1$. Then take $\alpha_{\mathcal{A}}=n'-4\frac{n'-1}{4}\alpha$, and thus, $\alpha_{\mathcal{A}}\equiv1\pmod{4}$, since $\frac{n'-1}{4}\in\mathbb{Z}$.
% \end{proof}

Define for  $x$  such that $i^{j}x\equiv1\pmod{(2+2i)\mathcal{O}_{K}}$, 
$$
\chi_{1}'(x)=i^{j}.
$$
Then for   $x$ coprime to $\mathfrak{m}$, define
$$
\tilde{\chi}_{n}(x)=\chi_{1}'(x)\left(\frac{n}{N(x)}\right)x,
% =\chi_{1}'(x)\left(\frac{N(x)}{n}\right)x,
$$
% where the second equality follows from the quadratic reciprocity law and the assumption of $n$ being with prime factors that are congruent~$1$ modulo~$4$, 
and write $\chi_{n}'(x)=\chi_{1}'(x)\left(\frac{N(x)}{n}\right)$.
Moreover,  it is defined to be $0$ for all $x$      not
 coprime to~$\mathfrak{m}$. 

Note that if $x=q\equiv3\pmod{4}$ is a prime, then 
$$
\tilde{\chi}_{n}(x)=-x\equiv 1\pmod{2+2i}.
$$
So  in terms of ideal, 
$$
\tilde{\chi}_{n}(x\mathcal{O}_{K})=\left(\frac{n}{N(x)}\right)x,\quad\mbox{and}\quad \chi_{n}'(x\mathcal{O}_{K})=\left(\frac{n}{N(x)}\right)
$$
for a generator~$x\equiv1\pmod{2+2i}$.  
If $x=a+bi$ with $N(x)=q\equiv1\pmod{4}$ a prime and $i^{j}x\equiv1\pmod{2+2i}$, then
$$
\tilde{\chi}_{n}(x)=i^{j}\left(\frac{n}{N(x)}\right)x=\left(\frac{n}{N(i^{j}x)}\right)(i^{j}x)
$$
with $i^{j}x\equiv1\pmod{2+2i}$.
So similarly, in terms of ideal
$$
\tilde{\chi}_{n}(x\mathcal{O}_{K})=\left(\frac{n}{N(x)}\right)x,\quad\mbox{and}\quad \chi_{n}'(x\mathcal{O}_{K})=\left(\frac{n}{N(x)}\right)
$$
for a generator~$x\equiv1\pmod{2+2i}$. One therefore can view and extend ${\chi}'_{n}$ as  a character on $J_{\mathfrak{m}}^{*}$ that is clearly trivial on $J_{\mathfrak{m},\mathbb{Z}}^{*}$. 

Finally, it is classically known, e.g., \cite{K}, that for a congruent number elliptic
 curve~$E_{n}$, one has
$$
L(E_{n},s)=\frac{1}{4}\sum_{\alpha\in \mathcal{O}_{K}}\frac{\tilde{\chi}_{n}(\alpha)}{N(\alpha)^{s}}
$$
for ${\rm Re}(s)>1$.

\section{The class fields $H_{\mathfrak{m}}$ and $H_{4n}$}\label{classfield}

Let $n$ be a positive integer such that $n=2^{e}m$ with $e=0$ or $1$, and  $m$  odd. Recall that   $\mathfrak{m}$ was defined  at the beginning of Section~\ref{charchi}. Denote by $J_{4n,\mathbb{Z}}^{*}$ the subgroup of $J_{\mathfrak{m},\mathbb{Z}}^{*}$ generated by principal integral ideals $\alpha\mathcal{O}_{K}$ with $\alpha\equiv \ell\pmod{4n\mathcal{O}_{K}}$ for some integer~$\ell$ coprime to~$4n$. It is clear that $J_{4n,\mathbb{Z}}^{*}\subset J_{\mathfrak{m},\mathbb{Z}}^{*}$ is of index~$2$.

By class field theory, let $H_{\mathfrak{m}}$ and $H_{4n}$ be the class fields over $K$ respectively corresponding  to $J_{\mathfrak{m},\mathbb{Z}}^{*}$ and $J_{4n,\mathbb{Z}}^{*}$. In particular, the class field $H_{4n}$ is known as the ring class field of conductor~$4n$ over~$K$ that is a quadratic extension of $H_{\mathfrak{m}}$. The following lemma specifies this quadratic extension.

\begin{lemma}\label{HH2}
Let $H_{\mathfrak{m}}$ and $H_{4n}$ be as defined  above. Then one has
$$
H_{4n}=H_{\mathfrak{m}}(\sqrt{2}).
$$

    % \item For $[\mathcal{A}]\in J_{4p}^{*}/J_{\mathfrak{m},\mathbb{Z}}^{*}$, let $\sigma_{\mathcal{A}}\in {\rm Gal}(H_{4p,\mathbb{Z}}/H_{\mathfrak{m},\mathbb{Z}})$ be the Galois element induced by $[\mathcal{A}]$ fixing~$\sqrt{2}$. Then
    % $$
    % \left((1+\sqrt{2})\sqrt{\delta}\frac{\Theta(pi)}{\Theta(i)}\right)^{\sigma_{\mathcal{A}}}=(1+\sqrt{2})\sqrt{\delta}\chi'_{n}([\mathcal{A}])\frac{\Theta(p\tau_{\mathcal{A}})}{\Theta(\tau_{\mathcal{A}})}.
    % $$

    %  \item For $[\mathcal{A}]\in J_{4p}^{*}/J_{\mathfrak{m},\mathbb{Z}}^{*}$, let $\sigma_{\mathcal{A}}\in {\rm Gal}(H_{4p,\mathbb{Z}}/H_{\mathfrak{m},\mathbb{Z}})$ be the Galois element induced by $[\mathcal{A}]$ sending~$\sqrt{2}\to-\sqrt{2}$. Then
    % $$
    % \left((1+\sqrt{2})\sqrt{\delta}\frac{\Theta(pi)}{\Theta(i)}\right)^{\sigma_{\mathcal{A}}}=(1-\sqrt{2})\sqrt{\delta}\chi'_{n}([\mathcal{A}])\frac{\Theta_{\chi_{-4}}(p\tau_{\mathcal{A}}/2)}{\Theta_{\chi_{-4}}(\tau_{\mathcal{A}}/2)}.
    % $$

\end{lemma}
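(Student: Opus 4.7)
The plan is to apply Artin reciprocity to identify both $H_{\mathfrak{m}}(\sqrt{2})$ and $H_{4n}$ as the unique quadratic extension of $H_{\mathfrak{m}}$ singled out by the index-$2$ inclusion $J^{*}_{4n,\mathbb{Z}}\subset J^{*}_{\mathfrak{m},\mathbb{Z}}$. By class field theory, the fact that $[J^{*}_{\mathfrak{m},\mathbb{Z}}:J^{*}_{4n,\mathbb{Z}}]=2$ already gives $[H_{4n}:H_{\mathfrak{m}}]=2$; so it suffices to verify the two inclusions $K(\sqrt{2})\subseteq H_{4n}$ and $K(\sqrt{2})\not\subseteq H_{\mathfrak{m}}$, after which the equality $H_{4n}=H_{\mathfrak{m}}(\sqrt{2})$ drops out for dimension reasons.

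For $K(\sqrt{2})\subseteq H_{4n}$ I would compute the Artin symbol on $K(\sqrt{2})/K$ at a typical generator $\alpha\mathcal{O}_{K}$ of $J^{*}_{4n,\mathbb{Z}}$. By definition one can choose $\alpha\equiv\ell\pmod{4n\mathcal{O}_{K}}$ for some rational integer $\ell$ coprime to $4n$, and then
\[
N(\alpha)=\alpha\bar\alpha\equiv\ell^{2}\pmod{4n}.
\]
Since $\ell$ is odd, $\ell^{2}\equiv 1\pmod{8}$, so $N(\alpha\mathcal{O}_{K})\equiv 1\pmod 8$. The second supplement to quadratic reciprocity then yields that $2$ is a square modulo $N(\alpha\mathcal{O}_{K})$; equivalently, $\alpha\mathcal{O}_{K}$ splits in $K(\sqrt{2})/K$, so its Artin symbol is trivial. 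Thus the Artin map from $J^{*}_{\mathfrak{m}}$ to $\mathrm{Gal}(K(\sqrt{2})/K)$ vanishes on $J^{*}_{4n,\mathbb{Z}}$, giving $K(\sqrt{2})\subseteq H_{4n}$ and hence $H_{\mathfrak{m}}(\sqrt{2})\subseteq H_{4n}$.

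For $K(\sqrt{2})\not\subseteq H_{\mathfrak{m}}$ I would exhibit a prime $\mathfrak{p}=(\pi)\in J^{*}_{\mathfrak{m},\mathbb{Z}}$ whose Frobenius on $K(\sqrt{2})/K$ is nontrivial, i.e., with $N\mathfrak{p}\equiv 5\pmod 8$. For odd $n$, Lemma~\ref{lemalpha}(1) lets me normalize $\pi=a+bi\equiv 1\pmod{4\mathcal{O}_{K}}$; among primes further satisfying the ring-class condition $\pi\equiv\ell\pmod{n\mathcal{O}_{K}}$, Chebotarev produces one with $b\equiv 2\pmod 4$, so that $N\pi=a^{2}+b^{2}\equiv 1+4\equiv 5\pmod 8$. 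For $n=2m$ even, I would invoke the alternative representative $[\delta\mathcal{A}]$ from Lemma~\ref{lemalpha}(2) with $\delta\equiv 3+2i\pmod{4\mathcal{O}_{K}}$, again using Chebotarev to find a prime whose generator, after the appropriate unit normalization, still sits in $J^{*}_{\mathfrak{m},\mathbb{Z}}$ but has norm $\equiv 5\pmod 8$.

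The main obstacle is handling the even case uniformly, since naively the condition $\alpha\equiv 1\pmod{4\mathcal{O}_{K}}$ already forces $N(\alpha)\equiv 1\pmod 8$ and would make $\sqrt{2}$ appear to already lie in $H_{\mathfrak{m}}$. The resolution lies in the ring-class flexibility encoded in Lemma~\ref{lemalpha}(2): the coset switch $[\mathcal{A}]\leftrightarrow[\delta\mathcal{A}]$ modifies the canonical generator modulo $4$ in just the way that changes the quadratic residue class of $2$, which is precisely how the extension $H_{\mathfrak{m}}(\sqrt{2})/H_{\mathfrak{m}}$ is detected by the index-$2$ subgroup $J^{*}_{4n,\mathbb{Z}}$.
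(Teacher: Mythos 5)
Your strategy---detecting the quadratic extension by computing the Artin symbol of $K(\sqrt2)/K$, which equals $\left(\tfrac{2}{N\mathfrak a}\right)$ and is therefore read off from $N\mathfrak a\bmod 8$---is a legitimate and more explicit route than the paper's, which instead identifies $H_4=K(\sqrt2)$ as the ring class field of conductor $4$ and argues that $H_{4n}/H_{\mathfrak m}$ is a quadratic extension ramified only at the dyadic prime. Your first half is essentially correct: writing $\alpha=\ell+4n\beta$ gives $N\alpha=\ell^2+8n\ell\,\mathrm{Re}(\beta)+16n^2N\beta\equiv1\pmod 8$ (you do need this extra line, since for odd $n$ the congruence $N\alpha\equiv\ell^2\pmod{4n}$ by itself only controls $N\alpha$ modulo $4$), so $K(\sqrt2)\subseteq H_{4n}$.

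The second half, $K(\sqrt2)\not\subseteq H_{\mathfrak m}$, is where the proposal breaks down. For odd $n$ your normalization is self-contradictory: $\pi=a+bi\equiv1\pmod{4\mathcal O_K}$ forces $b\equiv0\pmod4$, so no such prime has $b\equiv2\pmod4$. What you actually want are generators with $\pi\equiv3+2i\pmod{4\mathcal O_K}$: these are still $\equiv1\pmod{(2+2i)\mathcal O_K}$, hence genuinely lie in $J^*_{\mathfrak m,\mathbb Z}$, and have $N\pi\equiv13\equiv5\pmod 8$; note that Lemma~\ref{lemalpha}(1) concerns coset representatives of $J^*_{\mathfrak m}/J^*_{\mathfrak m,\mathbb Z}$ and is not the right tool for producing elements \emph{of} $J^*_{\mathfrak m,\mathbb Z}$. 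For $n=2m$ the proposed fix via Lemma~\ref{lemalpha}(2) cannot work: $\delta\mathcal A$ lies in a \emph{different coset} of $J^*_{\mathfrak m,\mathbb Z}$, not in $J^*_{\mathfrak m,\mathbb Z}$ itself, so it cannot witness nontriviality of the Artin map restricted to that subgroup. Your own ``naive'' observation is in fact the correct computation: for even $n$ every generator of $J^*_{\mathfrak m,\mathbb Z}$ is $\equiv1\pmod{4\mathcal O_K}$, hence has norm $\equiv1\pmod 8$, so $\left(\tfrac{2}{N\cdot}\right)$ is trivial on all of $J^*_{\mathfrak m,\mathbb Z}$ and $K(\sqrt2)\subseteq H_{\mathfrak m}$. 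This is not an obstacle to be argued around; it shows your method cannot close the even case, and it actually puts the statement itself under strain there (already for $n=2$ one checks $H_{\mathfrak m}=K(\sqrt2)$, so $H_{\mathfrak m}(\sqrt2)=H_{\mathfrak m}$ while $[H_{8}:H_{\mathfrak m}]=2$; the quadratic character cutting out $H_{4n}/H_{\mathfrak m}$ in that case is a conductor-$(8)$ character detecting $\mathrm{Im}(\alpha)\bmod 8$ rather than $\left(\tfrac{2}{N\alpha}\right)$). At minimum the even case requires a genuinely different mechanism from the one you propose.
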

  
\begin{proof}
The field $H_{4n}$ is a quadratic extension of the field $H_{\mathfrak{m}}$ which is ramified only at dyadic primes. 

\[
\begin{tikzcd}[row sep=1em, column sep=1.5em, every arrow/.append style = {-}]
 & H_{4n} \arrow[dl] \arrow[dr] & \\[6pt]
H_{4} \arrow[dr] & & H_{\mathfrak{m}} \arrow[dl] \\[6pt]
 & K &
\end{tikzcd}
\]
Since $K(\sqrt{2})$ is the unique quadratic extension of $K$ which is only ramified at $2$, we know that  $H_{4}=K(\sqrt{2})$ which implies that  $H_{4n}=H_{\mathfrak{m}}(\sqrt{2})$. 

    % Part (2) just follows from the classical fact that $j(4pi)^{\sigma_{\mathcal{A}}}=j(4p\tau_{\mathcal{A}})$ together with $\frac{\Theta(p\tau)}{\Theta(\tau)}$ being a rational function over~$\mathbb{Q}$ in $j(4p\tau)$ and $j(\tau)$.
\end{proof}

% It is clear that $J_{\mathfrak{m}}^{*}=J_{4p}^{*}\supset J_{\mathfrak{m},\mathbb{Z}}^{*}\supset J_{4p,\mathbb{Z}}^{*}\supset J_{4p,1}^{*}$. By the class field theory, there is a tower of abelian extensions of $K=\mathbb{Q}[i]$,
% $$
% K\subset H_{\mathfrak{m},\mathbb{Z}}\subset H_{4p,\mathbb{Z}}\subset H_{4p,1},
% $$
% where $H_{4p,1}$ is the ray class field with modulus~$4p$, with $[H_{4p,\mathbb{Z}}:H_{\mathfrak{m},\mathbb{Z}}]=2$,
% $$
% {\rm Gal}(H_{\mathfrak{m},\mathbb{Z}}/K)\cong J_{4p}^{*}/J_{\mathfrak{m},\mathbb{Z}}^{*}\cong \mathbb{Z}/(p-1)\mathbb{Z},\quad {\rm Gal}(H_{4p,\mathbb{Z}}/K)\cong J_{4p}^{*}/J_{4p,\mathbb{Z}}^{*}\cong \mathbb{Z}/2\mathbb{Z}\oplus \mathbb{Z}/(p-1)\mathbb{Z}.
% $$

Next note  that $\chi'_{n}$ is trivial on $J_{\mathfrak{m},\mathbb{Z}}^{*}$, and thus, defines a character on both ${\rm Gal}(H_{\mathfrak{m}}/K)$ and ${\rm Gal}(H_{4n}/K)$. So, $\langle\chi'_{n}\rangle=\{{\rm id}, \chi'_{n}\}$ is an order-2 subgroup of the character group of ${\rm Gal}(H_{4n}/K)$. 

Let $L'\subset {\rm Gal}(H_{4n}/K)$ be the stabilized subgroup of $\langle\chi'_{n}\rangle$, and let $L$ be its fixed field. Let $J_L$ be the kernel of $\chi'_n:J_{4n}^* \longrightarrow \mathbb{Z}/2\mathbb{Z}$.  Then $J_L\supset J_{2n,\mathbb{Z}}^*$. Since $\langle\chi'_{n}\rangle$ is trivial on $L'={\rm Gal}(H_{2n}/L)$,  this defines the character group of 
$$
{\rm Gal}(H_{2n}/K)/{\rm Gal}(H_{2n}/L)\cong {\rm Gal}(L/K)\cong \mathbb{Z}/2\mathbb{Z}.
$$
The following lemma is technical but will be useful in our formulation of $L(E_{n},1)$ as a trace.
\begin{lemma}\label{lemchip}
With notation as above, we have $L=K(\sqrt{n})$.  Moreover,  for a class $[\mathcal{A}]\in J_{4n}^{*}/J_{2n,\mathbb{Z}}^{*}$, 
    $$
    \mbox{$\chi'_{n}([\mathcal{A}])=-1$  if and only if $(\sqrt{n})^{\sigma_{\mathcal{A}}}=-\sqrt{n}$.}
    $$
\end{lemma}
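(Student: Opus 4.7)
The plan is to identify the quadratic character $\chi'_n$ on ${\rm Gal}(H_{4n}/K)$ with the character cut out by $K(\sqrt{n})/K$; once that identification is in hand, the second assertion will be an immediate restatement of Artin reciprocity, since $\sigma_\mathcal{A}$ acts nontrivially on $L=K(\sqrt{n})$ (equivalently $\sigma_\mathcal{A}(\sqrt{n})=-\sqrt{n}$) if and only if $\chi'_n([\mathcal{A}])=-1$.

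I would first check that $K(\sqrt{n})$ is a genuine quadratic subfield of $H_{4n}$: the discriminant of $\mathbb{Q}(\sqrt{n})/\mathbb{Q}$ divides $4n$, so the base change $K(\sqrt{n})=K\cdot\mathbb{Q}(\sqrt{n})$ has conductor over $K$ dividing $4n\mathcal{O}_{K}$ and thus lies in $H_{4n}$; and $\sqrt{n}\notin\mathbb{Q}(i)$ for any square-free $n>1$. Denote by $\psi_n$ its Artin character. By Chebotarev (or equivalently by the surjectivity of the Artin map onto ${\rm Gal}(H_{4n}/K)$ achieved on prime ideals), it suffices to show $\psi_n=\chi'_n$ on a set of prime generators of $J_{\mathfrak{m}}^{*}$.

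I would split the verification into two cases. For a split prime $\mathfrak{p}=x\mathcal{O}_{K}$ with $x\equiv 1\pmod{2+2i}$ and $N(x)=p$, the standard reduction $(\sqrt{n})^{p}\equiv n^{(p-1)/2}\sqrt{n}\equiv \left(\frac{n}{p}\right)\sqrt{n}$ modulo a prime above $\mathfrak{p}$ gives $\psi_n(\mathfrak{p})=\left(\frac{n}{p}\right)$, while by definition $\chi'_n(\mathfrak{p})=\left(\frac{p}{n}\right)$; the two agree by quadratic reciprocity, which becomes an equality because $p\equiv 1\pmod 4$, supplemented in the even case $n=2m$ by the Kronecker identity $\left(\frac{p}{2}\right)=\left(\frac{2}{p}\right)$ at the dyadic place. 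For an inert prime $\mathfrak{p}=q\mathcal{O}_{K}$ with $q\equiv 3\pmod 4$, the generator $-q\equiv 1\pmod{2+2i}$ yields $\chi'_n(\mathfrak{p})=\chi'_1(-q)\left(\frac{q^2}{n}\right)=1$; and since every element of $\mathbb{F}_q$ is a square in $\mathbb{F}_{q^2}=\mathcal{O}_{K}/\mathfrak{p}$, the Frobenius acts trivially on $\sqrt{n}$, so $\psi_n(\mathfrak{p})=1$ as well. Matching $\psi_n$ and $\chi'_n$ on every prime coprime to $\mathfrak{m}$ then gives the equality of characters, hence the equality $L=K(\sqrt{n})$.

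The main technical nuisance I anticipate is the Kronecker-symbol bookkeeping when $n=2m$: the required identity $\left(\frac{p}{2m}\right)=\left(\frac{2m}{p}\right)$ splits multiplicatively into $\left(\frac{p}{2}\right)\left(\frac{p}{m}\right)=\left(\frac{2}{p}\right)\left(\frac{m}{p}\right)$, whose two halves are matched respectively by $\left(\frac{p}{2}\right)=\left(\frac{2}{p}\right)$ for odd $p$ and by quadratic reciprocity for odd moduli (which becomes an equality because $m\equiv 1\pmod 4$ as a product of primes $\equiv 1\pmod 4$, together with $p\equiv 1\pmod 4$). A subsidiary check worth recording is that $\chi'_n$ genuinely factors through ${\rm Gal}(H_{2n}/K)$, which follows from the elementary congruence $N(\alpha)\equiv\ell^{2}\pmod{4n}$ whenever $\alpha\equiv\ell\pmod{2n}$, forcing $\chi'_n(\alpha\mathcal{O}_{K})=\left(\frac{\ell^{2}}{n}\right)=1$ on $J^{*}_{2n,\mathbb{Z}}$.
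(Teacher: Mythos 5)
Your proof is correct, but it takes a genuinely different route from the paper's. The paper identifies $L$ with $K(\sqrt{n})$ indirectly: it invokes the proof of Theorem 9.18 in Cox's book to place $K(\sqrt{n})$ inside $H_{2n}\subset H_{4n}$, asserts that the relevant Galois group is cyclic so that $K(\sqrt{n})$ is \emph{the} unique quadratic subextension and hence must be the fixed field of $\ker\chi'_n$, and leaves the ``moreover'' clause implicit in that identification. You instead compute the Artin character $\psi_n$ of $K(\sqrt{n})/K$ on prime ideals and match it with $\chi'_n$ generator by generator: for split primes this is quadratic reciprocity, trivialized by $N(x)\equiv 1\pmod 4$ (plus the Kronecker identity $\left(\tfrac{p}{2}\right)=\left(\tfrac{2}{p}\right)$ when $n=2m$), and for inert primes both characters equal $+1$ because the normalized generator $-q$ contributes $\left(\tfrac{q^2}{n}\right)=1$ while every element of $\mathbb{F}_q$ is a square in $\mathbb{F}_{q^2}$, so Frobenius fixes $\sqrt{n}$. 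Your approach is longer but buys two things: it proves the ``moreover'' equivalence directly as an equality of characters on $J_{4n}^{*}/J_{2n,\mathbb{Z}}^{*}$ rather than as a consequence of an abstract uniqueness statement, and it avoids the cyclicity claim for $\mathrm{Gal}(H_{2n}/K)$, which is the least transparent step of the paper's argument (for $n$ with several prime factors that group is not obviously cyclic, so the uniqueness of the quadratic subextension requires more care there). Your subsidiary verification that $\chi'_n$ is trivial on $J_{2n,\mathbb{Z}}^{*}$ via $N(\alpha)\equiv\ell^{2}\pmod{4n}$ is a useful addition; the only remaining detail worth recording is the degenerate case $n=1$, where $\chi'_n$ is trivial and $L=K=K(\sqrt{1})$, so the statement still holds.
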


\begin{proof}
    By the Artin reciprocity, there
is a subgroup  $G$ of $J_{2n}^{*}$ such that 
$J_{2n}^{*}/G \simeq {\rm Gal}(K(\sqrt{n})/K).
$
By the proof of Theorem 9.18 of \cite{Cox}, $K(\sqrt{n})\subset H_{2n}\subset H_{4n}$.  Note that 
${\rm Gal}(H_{2n,\mathbb{Z}}/K)$
 is a cyclic group. Hence $L=K(\sqrt{n})$ is the unique quadratic sub-extension
  of $K\subset H_{2n}$. 
\end{proof}

% \begin{lemma}[The conductor-discriminant formula]
    
% \end{lemma}

Finally, we end the present section with a lemma describing
 some special CM values of a theta function that lie in the class field $H_{4n}$. Let     $
    \Theta(\tau)=\sum\limits_{j,k=-\infty}^{\infty}q^{j^{2}+k^{2}},%\quad\mbox{and}\quad \Theta_{\chi_{-4}}(\tau)=\Theta(\tau)\otimes \chi_{-4},
    $
   which is the square of  the classical  Jacobi theta function. 

\begin{lemma}\label{thetagalois}
We have
    $$
\frac{\Theta(ni/2)}{\Theta(i/2)}\in H_{2n},
$$
and for any $\sigma_{\mathcal{A}}\in {\rm Gal}(H_{2n}/K)$,
$$
\left(\frac{\Theta(ni/2)}{\Theta(i/2)}\right)^{\sigma_{\mathcal{A}}^{-1}}=\frac{\Theta(n\tau_{\mathcal{A}}/2)}{\Theta(\tau_{\mathcal{A}}/2)}.
$$
\end{lemma}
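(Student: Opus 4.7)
The plan is to derive the lemma as an instance of the main theorem of complex multiplication (Shimura reciprocity) applied to the modular function
$$
g(\tau):=\frac{\Theta(n\tau/2)}{\Theta(\tau/2)}.
$$
First I would verify that $g$ is a weight-zero modular function with rational Fourier expansion. Recall $\Theta(\tau)=\theta(\tau)^{2}$ is a weight-$1$ modular form on $\Gamma_{0}(4)$ with trivial nebentypus and integer $q$-expansion; consequently, after the substitutions $\tau\mapsto n\tau/2$ and $\tau\mapsto\tau/2$, both numerator and denominator are weight-$1$ forms on a common congruence subgroup (containing a suitable $\Gamma_{0}(N)$ with $4n\mid N$ and a $\Gamma(2)$-type condition to absorb the $\tau/2$), and their ratio is a modular function with rational Fourier coefficients in $q^{1/2}$.

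Next I would invoke the main theorem of CM for modular functions. The point $\tau=i$ is a CM point with $\mathbb{Z}+\mathbb{Z} i=\mathcal{O}_{K}$, and for each integral ideal $\mathcal{A}$ of $\mathcal{O}_{K}$ the CM point $\tau_{\mathcal{A}}\in\mathcal{H}$ satisfies $[\mathbb{Z}+\mathbb{Z}\tau_{\mathcal{A}}]=[\mathcal{A}]$ in the class group; as $\mathcal{A}$ varies, the $\tau_{\mathcal{A}}$ lift to the distinct CM points on the modular curve on which $g$ is defined. By the Shimura--Taniyama reciprocity law, $g(i)$ lies in a ring class field $F$ of $K$, and the Galois action is
$$
g(i)^{\sigma_{\mathcal{A}}^{-1}}=g(\tau_{\mathcal{A}}),
$$
which is precisely the identity in the second part of the lemma (after writing $g(\tau_{\mathcal{A}})=\Theta(n\tau_{\mathcal{A}}/2)/\Theta(\tau_{\mathcal{A}}/2)$).

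The final (and most delicate) step is to pin down $F=H_{2n}$ rather than a larger ring class field such as $H_{4n}$. I would use Lemma~\ref{HH2}, which identifies the quadratic extension $H_{4n}/H_{\mathfrak{m}}$ (and hence $H_{4n}/H_{2n}$) with the adjunction of $\sqrt{2}$, reflecting the ambiguity at the dyadic prime. Since $g$ is a ratio of two weight-$1$ forms on $\Gamma_{0}(4)$ with \emph{trivial} characters, the half-integral data associated to the choices of $2$-level structure cancels in the quotient, so $g(i)$ is fixed by $\mathrm{Gal}(H_{4n}/H_{2n})$. Equivalently, one can check directly via Shimura's idelic formulation that the archetypal element of $\mathrm{Gal}(H_{4n}/H_{2n})$ acts trivially on $g$'s $q$-expansion, forcing $g(i)\in H_{2n}$.

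The main obstacle is the precise bookkeeping of levels and characters needed in the last step: making the Shimura reciprocity action on $g(\tau)$ fully explicit and confirming $\mathrm{Gal}(H_{4n}/H_{2n})$-invariance, as opposed to only $\mathrm{Gal}(\overline{K}/H_{4n})$-invariance. Once $g(i)\in H_{2n}$ is established, the second part of the lemma is a direct readout of Shimura reciprocity.
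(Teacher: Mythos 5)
Your overall strategy (evaluate a modular function at a CM point and invoke CM theory/Shimura reciprocity) is the same as the paper's, but the one step you yourself flag as delicate --- pinning the field down to $H_{2n}$ rather than $H_{4n}$ or a larger ray class field --- is exactly where your argument has a genuine gap. Because $\Theta(\tau/2)$ has a $q^{1/2}$-expansion, the function $g(\tau)=\Theta(n\tau/2)/\Theta(\tau/2)$ lives a priori on a group of level roughly $8n$, so naive reciprocity only places $g(i)$ in a field containing $H_{4n}$. Your proposed descent --- that ``the half-integral data associated to the choices of $2$-level structure cancels in the quotient, so $g(i)$ is fixed by $\mathrm{Gal}(H_{4n}/H_{2n})$'' --- is an assertion, not an argument; and appealing to Lemma~\ref{HH2} (which only identifies $H_{4n}=H_{\mathfrak m}(\sqrt2)$) does not by itself show that the relevant Galois involution fixes $g(i)$. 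As written, you have only $\mathrm{Gal}(\overline K/H_{4n})$-invariance plus a promissory note.

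The paper closes this gap with a specific device you do not use: since $\Theta(n\tau)/\Theta(\tau)$ is a modular function for $\Gamma_0(4n)$ with rational $q$-expansion, it equals a rational function over $\mathbb Q$ in $j(\tau)$ and $j(4n\tau)$; substituting $\tau\mapsto\tau/2$ exhibits $g(\tau)$ as a rational function over $\mathbb Q$ in $j(\tau/2)$ and $j(2n\tau)$. Evaluating at $\tau=i$, the points $i/2$ and $2ni$ have CM by the orders $\mathbb Z[2i]$ and $\mathbb Z[2ni]$ of conductors $2$ and $2n$, so $j(i/2), j(2ni)\in H_{2n}$ by classical CM theory for orders, and the Galois action $j(2ni)^{\sigma_{\mathcal A}}=j(2n\tau_{\mathcal A})$ gives the transformation law at once. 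If you want to keep your direct-reciprocity route, you would need to replace your heuristic with an honest computation (e.g.\ the explicit action of the idele corresponding to $\mathrm{Gal}(H_{4n}/H_{2n})$ on the theta constants $\theta_{[\mu,\nu]}$ appearing in $g$); otherwise, the substitution through the $j$-function is the cleaner and complete argument.
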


\begin{proof}
    It is clear that $\frac{\Theta(n\tau)}{\Theta(\tau)}$ is a modular function over~$\mathbb{Q}$ for $\Gamma_{0}(4n)$, so it is a rational function over~$\mathbb{Q}$ in $j(4n\tau)$ and $j(\tau)$, and thus, $\frac{\Theta(n\tau/2)}{\Theta(\tau/2)}$ is a rational function over~$\mathbb{Q}$ in $j(2n\tau)$ and $j(\tau/2)$. It is well known that $j(2ni),j(i/2)\in H_{2n}$, and $j(2ni)^{\sigma_{\mathcal{A}}}=j(2n\tau_{\mathcal{A}})$. The desired assertions follow.
\end{proof}

\section{Eisenstein series and the  theta functions}\label{jacobitheta}

Using the definition and properties of the character~$\chi_{n}'$ given in Section~\ref{charchi}, it is straightforward to see that
$$
L(E_{n},s)=\sum_{\substack{\alpha\in\mathcal{O}_{K}\\\alpha\equiv1\pmod{(2+2i)\mathcal{O}_{K}}}}\frac{\chi_n'(\overline{\alpha})}{\alpha|\alpha|^{2s-2}},
$$
where we recall that
$$
\chi'_{n}(\alpha)=
\begin{cases}
    \left(\frac{n}{N(\alpha)}\right),& \mbox{if $(\alpha,\mathfrak{m})=1$;}\\
    0,&\mbox{otherwise.}
\end{cases}
 $$
% Let $J_{\mathfrak{m},\mathbb{Z}}^{*}$ be the subgroup of $J_{\mathfrak{m}}^{*}$ generated by $\alpha\in\mathcal{O}_{K}$ such that
% $$
% \alpha\equiv\begin{cases}
%     1\pmod{(2+2i)}\\
%     \ell\pmod{n}
% \end{cases}
% $$
% for some $\ell\in\mathbb{Z}$ coprime to~$n$. Clearly, $\chi_{n}'$ is trivial on $J_{\mathfrak{m},\mathbb{Z}}^{*}$. 
Let $[\mathcal{A}]$ be class of $J_{\mathfrak{m}}^{*}/J_{\mathfrak{m},\mathbb{Z}}^{*}$ with $\mathcal{A}=\alpha_{\mathcal{A}}\mathcal{O}_{K}=[a,{b+i}{}]$ being primitive with $$\alpha_{\mathcal{A}}\equiv 1\pmod{(2+2i)\mathcal{O_{K}}},$$  
%$\alpha_{\mathcal{A}}\equiv1\pmod{2+2i}$, 
and 
$$
b\equiv \begin{cases}
    1\pmod{2},&\mbox{if $n$ is odd;}\\
    0\pmod{2},&\mbox{otherwise.}
\end{cases}
$$
 Then it is not hard to see that
 \begin{enumerate}
     \item for $n$ being odd, 
     $$
     L(E_{n},s)=\sum_{[\mathcal{A}]\in J_{\mathfrak{m}}^{*}/J_{\mathfrak{m},\mathbb{Z}}^{*}}\chi_{n}'(\alpha_{\mathcal{A}})\alpha_{\mathcal{A}}|\alpha_{\mathcal{A}}|^{2s-2}
  \displaystyle{  \sum_{\substack{\alpha\in \alpha_{\mathcal{A}}\mathcal{O}_{K}\\\alpha\equiv1\pmod{(2+2i)}\\\alpha\equiv\ell\pmod{n}}}\frac{1}{\alpha|\alpha|^{2s-2}}},
     $$

     \item  for $n$ being even, by Lemma~\ref{lemalpha},
     \begin{align*}
          L(E_{n},s)&=\sum_{\substack{[\mathcal{A}]\in J_{\mathfrak{m}}^{*}/J_{\mathfrak{m},\mathbb{Z}}^{*}\\\alpha_{\mathcal{A}}\equiv1\pmod{4\mathcal{O}_{K}}}}\chi_{n}'(\alpha_{\mathcal{A}})\alpha_{\mathcal{A}}|\alpha_{\mathcal{A}}|^{2s-2}
   \displaystyle{ \sum_{\substack{\alpha\in \alpha_{\mathcal{A}}\mathcal{O}_{K}\\\alpha\equiv1\pmod{4}\\\alpha\equiv\ell\pmod{m}}}\frac{1}{\alpha|\alpha|^{2s-2}}}\\
   &\quad -\sum_{\substack{[\mathcal{A}]\in J_{\mathfrak{m}}^{*}/J_{\mathfrak{m},\mathbb{Z}}^{*}\\\alpha_{\mathcal{A}}\equiv1\pmod{4\mathcal{O}_{K}}}}\chi_{n}'(\alpha_{\mathcal{A}})\alpha_{\mathcal{A}}|\alpha_{\mathcal{A}}|^{2s-2}
   \displaystyle{ \sum_{\substack{\alpha\in \alpha_{\mathcal{A}}\mathcal{O}_{K}\\\alpha\equiv3+2i\pmod{4}\\\alpha\equiv\ell\pmod{m}}}\frac{1}{\alpha|\alpha|^{2s-2}}}
     \end{align*}
    
 \end{enumerate}
where $\ell\in\mathbb{Z}$ coprime to~$n$.
Write $\alpha=ja+k(b+i)$, and note that $a\equiv1\pmod{4}$ and $a=N(\alpha_{\mathcal{A}})$ coprime to~$n$. It is routine to check that 
\begin{enumerate}
    \item when $n$ and $b$ are both odd, the conditions on $\alpha$ force $j$ and $k$ to satisfy that
    \begin{align*}
    k&\equiv0\pmod{2n},\quad j\equiv 1\pmod{4},\quad (j,n')=1,
\end{align*}

\item when $n=2m$ with $m$ being odd, $b$ is even, and
$$
\alpha\equiv\begin{cases}
    1\pmod{4},\\
    \ell\pmod{m},
\end{cases}
$$
these amount to have 
 \begin{align*}
    k&\equiv0\pmod{4m},\quad j\equiv 1\pmod{4},\quad (j,m)=1,
\end{align*}

\item when $n=2m$ with $m\equiv\pmod{4}$, $b$ is even, and
$$
\alpha\equiv\begin{cases}
    3+2i\pmod{4},\\
    \ell\pmod{m},
\end{cases}
$$
these amount to have
 \begin{align*}
    k&\equiv2\pmod{4},\quad k\equiv0\pmod{m},\quad j\equiv 3\pmod{4},\quad (j,m)=1.
\end{align*}
\end{enumerate}
% Then 
% in either case of $n$ being odd or $n=2m$, it is routine to check that their corresponding assumptions on $\alpha$ and $b$ force $j$ and $k$ to satisfy that

% where
% So writing
% $$
% n'=\begin{cases}
%     n&\mbox{if $n$ is odd,}\\
%     m&\mbox{if $n=2m$,}
% \end{cases}
% $$
% one has 
Then writing $L(E_{n},s)$ in terms of lattice points, one respectively has
\begin{enumerate}
    \item  when $n$ is odd,
\begin{align}
    &L(E_{n},s)=\sum_{\substack{[\mathcal{A}]\in J_{\mathfrak{m}}^{*}/J_{\mathfrak{m},\mathbb{Z}}^{*}\\\alpha_{\mathcal{A}}\equiv1\pmod{4\mathcal{O}_{K}}\\b\,odd}}\frac{\chi'_{n}(\alpha_{\mathcal{A}})\alpha_{\mathcal{A}}|\alpha_{\mathcal{A}}|^{2s-2}}{N(\mathcal{A})|N(\mathcal{A})|^{2s-2}}\sum_{\substack{k\equiv0\pmod{2}\\j\equiv1\pmod{4}\\(j,n')=1}}\frac{1}{(j+k(n'\tau_{\mathcal{A}}))|j+k(n'\tau_{\mathcal{A}})|^{2s-2}},\nonumber
   % &\quad\quad\times \left(\sum_{\substack{k\equiv0\pmod{4}\\j\equiv1\pmod{4}\\(j,n')=1}}\frac{1}{(j+k(n\tau_{\mathcal{A}}))|j+k(n\tau_{\mathcal{A}})|^{2s-2}}+\sum_{\substack{k\equiv2\pmod{4}\\j\equiv1\pmod{4}\\(j,n')=1}}\frac{1}{(j+k(n\tau_{\mathcal{A}}))|j+k(n\tau_{\mathcal{A}})|^{2s-2}}\right),\nonumber
\end{align}

\item when $n=2m$ with $m$ being odd,
\begin{align}
    &L(E_{n},s)\nonumber\\
    % &=\sum_{\substack{[\mathcal{A}]\in J_{\mathfrak{m}}^{*}/J_{\mathfrak{m},\mathbb{Z}}^{*}\\\alpha_{\mathcal{A}}\equiv1\pmod{4\mathcal{O}_{K}}}}\left(\frac{\chi'_{n}(\alpha_{\mathcal{A}})\alpha_{\mathcal{A}}|\alpha_{\mathcal{A}}|^{2s-2}}{N(\mathcal{A})|N(\mathcal{A})|^{2s-2}}\nonumber\sum_{\substack{k\equiv0\pmod{4}\\j\equiv1\pmod{4}\\(j,n')=1}}\frac{1}{(j+k(n'\tau_{\mathcal{A}}))|j+k(n'\tau_{\mathcal{A}})|^{2s-2}}\right.\nonumber\\
    % &\quad\quad\quad\quad\quad\quad\quad\quad\quad\quad\quad+\left.\frac{\chi'_{n}(\alpha_{\mathcal{B}})\alpha_{\mathcal{B}}|\alpha_{\mathcal{B}}|^{2s-2}}{N(\mathcal{B})|N(\mathcal{B})|^{2s-2}}\nonumber\sum_{\substack{k\equiv0\pmod{4}\\j\equiv1\pmod{4}\\(j,n')=1}}\frac{1}{(j+k(n'\tau_{\mathcal{B}}))|j+k(n'\tau_{\mathcal{B}})|^{2s-2}}\right)\nonumber\\
     &=\sum_{\substack{[\mathcal{A}]\in J_{\mathfrak{m}}^{*}/J_{\mathfrak{m},\mathbb{Z}}^{*}\\\alpha_{\mathcal{A}}\equiv1\pmod{4\mathcal{O}_{K}}\\b\, even}}\left(\frac{\chi'_{n}(\alpha_{\mathcal{A}})\alpha_{\mathcal{A}}|\alpha_{\mathcal{A}}|^{2s-2}}{N(\mathcal{A})|N(\mathcal{A})|^{2s-2}}\nonumber\sum_{\substack{k\equiv0\pmod{4}\\j\equiv1\pmod{4}\\(j,m)=1}}\frac{1}{(j+k(m\tau_{\mathcal{A}}))|j+k(m\tau_{\mathcal{A}})|^{2s-2}}\right.\nonumber\\
    &\quad\quad\quad\quad\quad\quad\quad\quad\quad\quad\quad-\left.\frac{\chi'_{n}(\alpha_{\mathcal{A}})\alpha_{\mathcal{A}}|\alpha_{\mathcal{A}}|^{2s-2}}{N(\mathcal{A})|N(\mathcal{A})|^{2s-2}}\nonumber\sum_{\substack{k\equiv2\pmod{4}\\j\equiv3\pmod{4}\\(j,m)=1}}\frac{1}{(j+k(m\tau_{\mathcal{B}}))|j+k(m\tau_{\mathcal{B}})|^{2s-2}}\right)\\
    &=\sum_{\substack{[\mathcal{A}]\in J_{\mathfrak{m}}^{*}/J_{\mathfrak{m},\mathbb{Z}}^{*}\\\alpha_{\mathcal{A}}\equiv1\pmod{4\mathcal{O}_{K}}\\b\, even}} \frac{\chi'_{n}(\alpha_{\mathcal{A}})\alpha_{\mathcal{A}}|\alpha_{\mathcal{A}}|^{2s-2}}{N(\mathcal{A})|N(\mathcal{A})|^{2s-2}}\nonumber\sum_{\substack{k\equiv0\pmod{2}\\j\equiv1\pmod{4}\\(j,m)=1}}\frac{1}{(j+k(m\tau_{\mathcal{A}}))|j+k(m\tau_{\mathcal{A}})|^{2s-2}}.
\end{align}
\end{enumerate}
Therefore, writing
$$
n'=\begin{cases}
    n,&\mbox{if $n$ is odd;}\\
    m,&\mbox{if $n=2m$ with $m\equiv 1\pmod{4}$,}
\end{cases}
$$
one can collectively display $L(E_{n},s)$ as
\begin{align}\label{Ls1}
   L(E_{n},s) =\sum_{\substack{[\mathcal{A}]\in J_{\mathfrak{m}}^{*}/J_{\mathfrak{m},\mathbb{Z}}^{*}\\\alpha_{\mathcal{A}}\equiv1\pmod{4\mathcal{O}_{K}}}} \frac{\chi'_{n}(\alpha_{\mathcal{A}})\alpha_{\mathcal{A}}|\alpha_{\mathcal{A}}|^{2s-2}}{N(\mathcal{A})|N(\mathcal{A})|^{2s-2}}\sum_{\substack{k\equiv0\pmod{2}\\j\equiv1\pmod{4}\\(j,n')=1}}\frac{1}{(j+k(n'\tau_{\mathcal{A}}))|j+k(n'\tau_{\mathcal{A}})|^{2s-2}}.
\end{align}
where  the representatives $\mathcal{A}=[a,b+i]$ are taken to be with $b$ being odd for $n$ odd, and even for $n=2m$. 

% where $[\mathcal{B}]$ is the class associated to~$[\mathcal{A}]$ by Lemma~\ref{lemalpha}. Remark that since $\alpha_{\mathcal{B}}\equiv3+2i\pmod{4\mathcal{O}_{K}}$ and $\alpha_{\mathcal{B}}\equiv \alpha_{\mathcal{A}}\pmod{m\mathcal{O}_{K}}$, thus $\chi'_{n}(\alpha_{\mathcal{B}})=-\chi_{n}'(\alpha_{\mathcal{A}})$.
Upon this, we aim to express the double sums inside the parentheses above in terms of theta functions. To this end, we have to invoke the following two lemmas.

\begin{lemma}\label{eisenstein1}
Write 
$$
n'=\begin{cases}
    n,&\mbox{if $n$ is odd};\\
    m,&\mbox{if $n=2m$.}
\end{cases}
$$
Denote by ${\rm rad}(n')$ the radical of~$n'$. Then one has
    \begin{align*}
        &\sum_{\substack{k\equiv0\pmod{2}\\j\equiv1\pmod{4}\\(j,n')=1}}\frac{1}{(j+k(n'\tau_{\mathcal{A}}))|j+k(n'\tau_{\mathcal{A}})|^{2s-2}}\\
        &=\sum_{\substack{p_{1}\cdots p_{\ell}|{\rm rad}(n')\\ p_{1}\cdots p_{\ell}\equiv r\pmod{4}}}\frac{(-1)^{\ell+\frac{r-1}{2}}}{(p_{1}\cdots p_{\ell})|p_{1}\cdots p_{\ell}|^{2s-2}}\sum_{\substack{k\equiv0\pmod{4}\\j\equiv1\pmod{4}}}\frac{1}{\left(j+k\left(\frac{n'}{p_{1}\cdots p_{\ell}}\tau_{\mathcal{A}}\right)\right)\left|j+k\left(\frac{n'}{p_{1}\cdots p_{\ell}}\tau_{\mathcal{A}}\right)\right|^{2s-2}},
    %     \intertext{and}
    %     &\sum_{\substack{k\equiv0\pmod{2}\\j\equiv1\pmod{4}\\(j,n')=1}}\frac{1}{j+k(p\tau_{\mathcal{A}})}\\
    %     &=\sum_{p_{1}\cdots p_{\ell}|{\rm rad}(n')}\frac{(-1)^{\ell}}{(p_{1}\cdots p_{\ell})|p_{1}\cdots p_{\ell}|^{2s-2}}\sum_{\substack{k\equiv0\pmod{4}\\j\equiv1\pmod{4}}}\frac{1}{\left(j+k\left(\frac{n'}{p_{1}\cdots p_{\ell}}\tau_{\mathcal{A}}\right)\right)\left|j+k\left(\frac{n'}{p_{1}\cdots p_{\ell}}\tau_{\mathcal{A}}\right)\right|^{2s-2}},
    \end{align*}
    where $\ell$ is set to be~$0$ when $p_{1}\cdots p_{\ell}=1$.
\end{lemma}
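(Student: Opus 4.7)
I would prove this by Möbius inversion on the coprimality condition $(j,n')=1$, followed by a change of variable in $j$ and a sign-flipping symmetry to normalize residues. The starting point is
\begin{equation*}
\sum_{(j,n')=1}f(j)=\sum_{d=p_{1}\cdots p_{\ell}\mid\mathrm{rad}(n')}(-1)^{\ell}\sum_{d\mid j}f(j),
\end{equation*}
applied to the inner sum on the left-hand side (with the restrictions on $j$ and $k$ held fixed). After interchanging the order of summation, this produces a sum indexed by squarefree divisors $d=p_{1}\cdots p_{\ell}$ of $n'$, each weighted by $(-1)^{\ell}$ and carrying the extra condition $d\mid j$.

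For each such $d$, I substitute $j=dj'$. Since $n'$ is odd, $d$ is odd, so $d^{2}\equiv 1\pmod 8$ and hence $d^{-1}\equiv d\pmod 4$; the congruence $dj'\equiv 1\pmod 4$ therefore becomes $j'\equiv d\pmod 4$. Simultaneously, the denominator factors as $j+kn'\tau_{\mathcal{A}}=d\bigl(j'+k(n'/d)\tau_{\mathcal{A}}\bigr)$, producing a factor $1/d^{2s-1}=1/(d\cdot d^{2s-2})$, which matches precisely the weight attached to each squarefree divisor on the right-hand side.

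To normalize the $j'$-congruence uniformly to $j'\equiv 1\pmod 4$, I deal with the divisors $d\equiv 3\pmod 4$ by applying the involution $(j',k)\mapsto(-j',-k)$. This involution preserves the congruence condition on $k$, converts $j'\equiv 3\pmod 4$ into $j'\equiv 1\pmod 4$, and contributes an extra factor of $-1$ coming from the linear term in the denominator (the modulus factor is insensitive to this sign change). Combining with the Möbius sign $(-1)^{\ell}$ yields the claimed overall weight $(-1)^{\ell+(r-1)/2}$, where $r=d\bmod 4\in\{1,3\}$.

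The step I expect to require the most care, and which is the main technical hurdle, is tracking the precise congruence condition on $k$ throughout the substitutions and matching it against the stated form on the right-hand side. This requires revisiting the origin of the $k$-constraints in the derivation of the display formula~(4.3) for $L(E_{n},s)$, particularly the interplay in the even case $n=2m$ between the $\alpha\equiv 1\pmod 4$ and $\alpha\equiv 3+2i\pmod 4$ contributions, whose difference, after the sign-flipping symmetry $(j,k)\mapsto(-j,-k)$, combines into a single sum with the stated congruence on~$k$.
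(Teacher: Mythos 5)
Your proposal is correct and is exactly the paper's approach: the paper's entire proof of this lemma is the one-line remark that it ``follows immediately from the inclusion-exclusion principle,'' and your Möbius inversion over squarefree divisors $d=p_1\cdots p_\ell$ of $n'$, the substitution $j=dj'$ (producing the factor $d^{-1}|d|^{2-2s}$ and the congruence $j'\equiv d\pmod{4}$), and the involution $(j',k)\mapsto(-j',-k)$ for $d\equiv3\pmod{4}$ are the standard way to carry that out, yielding precisely the weight $(-1)^{\ell+(r-1)/2}$. As for the step you flagged: the $k$-congruence is simply untouched by all of these manipulations, so it remains $k\equiv0\pmod{2}$; the condition $k\equiv0\pmod{4}$ on the right-hand side of the stated lemma is a typo for $k\equiv0\pmod{2}$, as one can confirm from Corollary~\ref{eisentheta}, which evaluates each inner sum via Lemma~\ref{thetatheta} whose sum runs over $k\equiv0\pmod{2}$.
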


\begin{proof}
    This follows immediately from the inclusion-exclusion principle.
\end{proof}

% \begin{lemma}
%     Let $n=p\equiv1\pmod{4}$ be a prime.
%     \begin{align*}
%         \sum_{\substack{k\equiv0\pmod{4}\\j\equiv1\pmod{4}\\(j,p)=1}}\frac{1}{j+k(p\tau_{\mathcal{A}})}&=\sum_{\substack{k\equiv0\pmod{4}\\j\equiv1\pmod{4}}}\frac{1}{j+k(p\tau_{\mathcal{A}})}-\frac{1}{p}\sum_{\substack{k\equiv0\pmod{4}\\j\equiv1\pmod{4}}}\frac{1}{j+k\tau_{\mathcal{A}}},\\
%         \sum_{\substack{k\equiv2\pmod{4}\\j\equiv1\pmod{4}\\(j,p)=1}}\frac{1}{j+k(p\tau_{\mathcal{A}})}&=\sum_{\substack{k\equiv2\pmod{4}\\j\equiv1\pmod{4}}}\frac{1}{j+k(p\tau_{\mathcal{A}})}-\frac{1}{p}\sum_{\substack{k\equiv2\pmod{4}\\j\equiv1\pmod{4}}}\frac{1}{j+k\tau_{\mathcal{A}}}.
%     \end{align*}
% \end{lemma}

\begin{lemma}\label{thetatheta}
   We have 
    \begin{align*} 
    \underset{s=1}{{\rm CT}}\sum_{\substack{k\equiv0\pmod{2}\\j\equiv1\pmod{4}}}\frac{1}{(j+k\tau)|j+k\tau|^{2s-2}}&=\frac{\pi}{4}\Theta(\tau/2).
    % \\
    %     \underset{s=1}{{\rm CT}} \sum_{\substack{k\equiv0\pmod{4}\\j\equiv1\pmod{4}}}\frac{1}{(j+k\tau)|j+k\tau|^{2s-2}}&=\frac{\pi}{4}\Theta(\tau).
    \end{align*}

\end{lemma}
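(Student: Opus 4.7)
The plan is to evaluate the constant term at $s=1$ by summing over $j$ first via Eisenstein's identity $\sum_{n\in\mathbb{Z}}(n+w)^{-1}=\pi\cot(\pi w)$, then Fourier-expanding the cotangent and identifying the resulting $q$-expansion with that of $\frac{\pi}{4}\Theta(\tau/2)$. Writing $j=4a+1$ and $k=2b$, Eisenstein's identity yields, for each $b$,
\begin{equation*}
\sum_{a\in\mathbb{Z}}\frac{1}{4a+1+2b\tau}=\frac{\pi}{4}\cot\!\left(\frac{\pi(1+2b\tau)}{4}\right),
\end{equation*}
so the $b=0$ contribution is $\frac{\pi}{4}\cot(\pi/4)=\frac{\pi}{4}$, which is $L(\chi_{4},1)$.

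For $b\neq 0$, I would apply the expansions $\cot(\pi w)=-i-2i\sum_{n\geq 1}e^{2\pi inw}$ in the upper half plane and its conjugate in the lower. Using $e^{2\pi in(1+2b\tau)/4}=i^{n}e^{\pi inb\tau}$ and pairing $b$ with $-b$, the $\mp i$ constants cancel and each exponential pair comes with coefficient $(-i)^{n}-i^{n}$, which is $0$ for $n$ even and $-2i\,\chi_{4}(n)$ for $n$ odd. Collecting all contributions one obtains
\begin{equation*}
\frac{\pi}{4}\left(1+4\sum_{m\geq 1}\Bigl(\sum_{\substack{d\mid m\\ d\text{ odd}}}\chi_{4}(d)\Bigr)e^{\pi im\tau}\right),
\end{equation*}
and Jacobi's two-square theorem $r_{2}(m)=4\sum_{d\mid m,\,d\text{ odd}}\chi_{4}(d)$ identifies this with $\frac{\pi}{4}\sum_{m\geq 0}r_{2}(m)e^{\pi im\tau}=\frac{\pi}{4}\Theta(\tau/2)$, completing the identification.

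The main obstacle is that the original double series is only conditionally convergent at $s=1$, so the interchange of the $a$- and $b$-sums above is formal at that point. I would justify it by performing the computation first at $\mathrm{Re}(s)>3/2$ (where the regulator $|4a+1+2b\tau|^{2-2s}$ makes everything absolutely convergent), obtaining an analytically continued inner sum of Hurwitz-zeta type, and then letting $s\to 1^{+}$ and extracting the constant term; the non-holomorphic correction vanishes in the limit because the weight-one Eisenstein series in question is holomorphic, matching the holomorphic theta series on the right. This is the standard Hecke--Kronecker regularization underlying the $\mathrm{CT}_{s=1}$ notation, and the same device is used elsewhere in the paper when manipulating weight-one Eisenstein series.
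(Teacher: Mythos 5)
Your proposal is correct and follows essentially the same route the paper sketches: the Hecke trick to regularize for ${\rm Re}(s)>3/2$, the Lipschitz/cotangent Fourier expansion of the weight-one Eisenstein series (your $b=0$ term giving $L(1,\chi_{-4})=\pi/4$, and the paired $b,-b$ terms producing the positive-frequency exponentials while the zero-frequency constants $\mp i$ cancel), and Jacobi's two-squares identity to recognize $\frac{\pi}{4}\Theta(\tau/2)$. The only caveat is notational: what you call $\chi_4$ is the paper's odd character $\chi_{-4}$, since the paper reserves $\chi_4$ for the trivial character of conductor $4$.
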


\begin{proof}
    This basically follows from the Hecke trick \cite{H} and the Fourier expansions of classical Eisenstein series of weight~$1$ (see, e.g., \cite{DS}), as well as the classical identity \cite{J}
    $$
    \Theta(\tau)=1+4\sum_{k=1}^{\infty}\left(\sum_{d|k}\chi_{-4}(d)\right)q^{n}.
    $$
    So we omit the details.
\end{proof}

Upon Lemmas~\ref{eisenstein1} and~\ref{thetatheta}, one immediately has 

\begin{corollary}\label{eisentheta}
Write 
$$
n'=\begin{cases}
    n,&\mbox{if $n$ is odd};\\
    m,&\mbox{if $n=2m$}.
\end{cases}
$$
    Denote by ${\rm rad}(n')$ the radical of~$n'$. Then one has
    \begin{align*}
        \underset{s=1}{{\rm CT}} \sum_{\substack{k\equiv0\pmod{2}\\j\equiv1\pmod{4}\\(j,n')=1}}\frac{1}{(j+k(n'\tau))|j+k\tau|^{2s-2}}&=\frac{\pi}{4} \sum_{\substack{p_{1}\cdots p_{\ell}|{\rm rad}(n')\\ p_{1}\cdots p_{\ell}\equiv r\pmod{4}}}\frac{(-1)^{\ell+\frac{r-1}{2}}}{p_{1}\cdots p_{\ell}}\Theta\left(\frac{n'}{p_{1}\cdots p_{\ell}}(\tau/2)\right).
       %  \\
       % \underset{s=1}{{\rm CT}}  \sum_{\substack{k\equiv0\pmod{4}\\j\equiv1\pmod{4}\\(j,n')=1}}\frac{1}{(j+k(n\tau))|j+k\tau|^{2s-2}}&=\frac{\pi}{4} \sum_{p_{1}\cdots p_{\ell}|{\rm rad}(n')}\frac{(-1)^{\ell}}{p_{1}\cdots p_{\ell}}\Theta\left(\frac{n'}{p_{1}\cdots p_{\ell}}\tau\right).
    \end{align*}
\end{corollary}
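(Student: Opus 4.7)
The corollary is essentially a direct concatenation of the two preceding lemmas, so the plan is to feed the output of Lemma~\ref{eisenstein1} into Lemma~\ref{thetatheta} term by term and take the constant term at $s=1$. First I would apply Lemma~\ref{eisenstein1} with $\tau$ in place of $\tau_{\mathcal{A}}$ to rewrite the left-hand side, turning the restricted sum (with the coprimality condition $(j,n')=1$ and $k\equiv 0\pmod 2$) into the weighted sum over squarefree divisors $p_{1}\cdots p_{\ell}\mid \mathrm{rad}(n')$:
\begin{align*}
\sum_{\substack{k\equiv0\pmod{2}\\j\equiv1\pmod{4}\\(j,n')=1}}\frac{1}{(j+k(n'\tau))|j+k(n'\tau)|^{2s-2}}
=\sum_{\substack{p_{1}\cdots p_{\ell}\mid\mathrm{rad}(n')\\ p_{1}\cdots p_{\ell}\equiv r\pmod{4}}}\frac{(-1)^{\ell+\frac{r-1}{2}}}{(p_{1}\cdots p_{\ell})|p_{1}\cdots p_{\ell}|^{2s-2}}\,S_{p_{1}\cdots p_{\ell}}(\tau,s),
\end{align*}
where $S_{d}(\tau,s)$ denotes the inner sum over $k\equiv0\pmod{4}$, $j\equiv 1\pmod 4$, with argument $\tfrac{n'}{d}\tau$.

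Next I would take the constant term at $s=1$ on both sides. The prefactor $\frac{(-1)^{\ell+(r-1)/2}}{(p_{1}\cdots p_{\ell})|p_{1}\cdots p_{\ell}|^{2s-2}}$ is holomorphic at $s=1$ and evaluates to $\frac{(-1)^{\ell+(r-1)/2}}{p_{1}\cdots p_{\ell}}$ there, so it commutes with $\mathrm{CT}_{s=1}$. For each inner sum, Lemma~\ref{thetatheta} applied with $\tau$ replaced by $\tfrac{n'}{p_{1}\cdots p_{\ell}}\tau$ gives
\[
\underset{s=1}{\mathrm{CT}}\;S_{p_{1}\cdots p_{\ell}}(\tau,s)=\frac{\pi}{4}\,\Theta\!\left(\frac{n'}{p_{1}\cdots p_{\ell}}\cdot\frac{\tau}{2}\right).
\]
Summing these contributions and factoring out $\pi/4$ yields exactly the right-hand side of the corollary.

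There is no real obstacle here: the two ingredients do all the work, and the only thing to verify carefully is that the finite sum over divisors of $\mathrm{rad}(n')$ can be interchanged with the $\mathrm{CT}_{s=1}$ operation. Since the sum is finite and each summand has a well-defined (in fact holomorphic) value at $s=1$ after multiplication by the holomorphic prefactor, this interchange is trivially justified, and no convergence issue arises.
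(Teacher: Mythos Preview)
Your proof is correct and follows exactly the paper's own approach: the paper simply states that the corollary is immediate from Lemmas~\ref{eisenstein1} and~\ref{thetatheta}, and you have spelled out precisely that concatenation, including the trivial justification for passing $\mathrm{CT}_{s=1}$ through the finite divisor sum. (The apparent discrepancy between the congruence $k\equiv 0\pmod 4$ in the inner sum of Lemma~\ref{eisenstein1} and $k\equiv 0\pmod 2$ in Lemma~\ref{thetatheta} is a typo in the paper, not a gap in your argument.)
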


Combining the identities given in Corollary~\ref{eisentheta} and~\eqref{Ls1} gives
\begin{proposition}\label{main1}
      Denote by ${\rm rad}(n')$ the radical of~$n'$. Then
    $$
    L(E_{n},1)=\frac{\pi}{4}\sum_{\substack{[\mathcal{A}]\in J_{\mathfrak{m}}^{*}/J_{\mathfrak{m},\mathbb{Z}}^{*}\\\alpha_{\mathcal{A}}\equiv1\pmod{4\mathcal{O}_{K}}}}\frac{\chi'_{n}(\alpha_{\mathcal{A}})\alpha_{\mathcal{A}}}{N(\mathcal{A})}\sum_{\substack{p_{1}\cdots p_{\ell}|{\rm rad}(n')\\ p_{1}\cdots p_{\ell}\equiv r\pmod{4}}}\frac{(-1)^{\ell+\frac{r-1}{2}}}{p_{1}\cdots p_{\ell}}\Theta\left(\frac{n'}{p_{1}\cdots p_{\ell}}(\tau_{\mathcal{A}}/2)\right),
    $$
     % \item   for $n=2m$ with $m$ being odd,
     % \begin{align*}
     %      L(E_{n},1)&=\frac{\pi}{4}\sum_{\substack{[\mathcal{A}]\in J_{\mathfrak{m}}^{*}/J_{\mathfrak{m},\mathbb{Z}}^{*}\\\alpha_{\mathcal{A}}\equiv1\pmod{4\mathcal{O}_{K}}}}\left(\frac{\chi'_{n}(\alpha_{\mathcal{A}})\alpha_{\mathcal{A}}}{N(\mathcal{A})}\sum_{p_{1}\cdots p_{\ell}|{\rm rad}(n')}\frac{(-1)^{\ell}}{p_{1}\cdots p_{\ell}}\Theta\left(\frac{n'}{p_{1}\cdots p_{\ell}}\tau_{\mathcal{A}}\right)\right.\\
     %      &\quad\quad\quad\quad\quad\quad\quad\quad\quad\quad\quad\quad-\left.\frac{\chi'_{n}(\alpha_{\mathcal{A}})\alpha_{\mathcal{B}}}{N(\mathcal{B})}\sum_{p_{1}\cdots p_{\ell}|{\rm rad}(n')}\frac{(-1)^{\ell}}{p_{1}\cdots p_{\ell}}\Theta\left(\frac{n'}{p_{1}\cdots p_{\ell}}\tau_{\mathcal{B}}\right)\right)
     % \end{align*}
    where $\mathcal{A}=[a,b+i]$ is picked to be a primitive integral ideal
    with $b$ being odd for $n$ odd, and even for $n=2m$.

\end{proposition}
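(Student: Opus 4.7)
The proposition is essentially the combination of the decomposition \eqref{Ls1} with Corollary~\ref{eisentheta}, evaluated at $s=1$, so the plan is largely bookkeeping once the analytic continuation is justified.

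My plan is to start from identity \eqref{Ls1}, which displays $L(E_n,s)$ for $\mathrm{Re}(s)>1$ as a sum over classes $[\mathcal{A}]\in J_{\mathfrak{m}}^{*}/J_{\mathfrak{m},\mathbb{Z}}^{*}$ of the ``outer'' factor $\chi'_n(\alpha_{\mathcal{A}})\alpha_{\mathcal{A}}|\alpha_{\mathcal{A}}|^{2s-2}/(N(\mathcal{A})|N(\mathcal{A})|^{2s-2})$ times the lattice double sum
\[
S_{\mathcal{A}}(s)=\sum_{\substack{k\equiv0\pmod{2}\\ j\equiv1\pmod{4}\\ (j,n')=1}}\frac{1}{(j+k(n'\tau_{\mathcal{A}}))\,|j+k(n'\tau_{\mathcal{A}})|^{2s-2}}.
\]
The outer factor is an entire function of $s$ that specializes at $s=1$ to $\chi'_n(\alpha_{\mathcal{A}})\alpha_{\mathcal{A}}/N(\mathcal{A})$, so the whole statement reduces to identifying the value (via Hecke regularization) of $S_{\mathcal{A}}(s)$ at $s=1$.

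Next I would invoke Lemma~\ref{eisenstein1} to rewrite $S_{\mathcal{A}}(s)$, via inclusion--exclusion on the coprimality condition $(j,n')=1$, as
\[
S_{\mathcal{A}}(s)=\sum_{\substack{p_1\cdots p_\ell\mid \mathrm{rad}(n')\\ p_1\cdots p_\ell\equiv r\,(4)}}\frac{(-1)^{\ell+\frac{r-1}{2}}}{(p_1\cdots p_\ell)\,|p_1\cdots p_\ell|^{2s-2}}\,T_{\mathcal{A},\ell}(s),
\]
where each inner piece $T_{\mathcal{A},\ell}(s)$ is a lattice sum over $j\equiv1\pmod 4$ and $k\equiv0\pmod 4$, attached to the CM point $(n'/(p_1\cdots p_\ell))\tau_{\mathcal{A}}$. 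Then Lemma~\ref{thetatheta}, itself a consequence of the Hecke trick applied to weight-one Eisenstein series together with the standard $q$-expansion of $\Theta$, identifies the constant term at $s=1$ of each $T_{\mathcal{A},\ell}$ with $\tfrac{\pi}{4}\Theta\!\bigl((n'/(p_1\cdots p_\ell))(\tau_{\mathcal{A}}/2)\bigr)$; this is packaged as Corollary~\ref{eisentheta}.

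Substituting Corollary~\ref{eisentheta} into \eqref{Ls1} and evaluating at $s=1$ yields the claimed formula, after combining the outer and inner pieces. The one genuine point I expect to be the main obstacle, and would write out carefully, is the passage from the conditionally convergent series defining $L(E_n,s)$ through \eqref{Ls1} to the Hecke-regularized value at $s=1$: one must justify that taking $\mathrm{CT}_{s=1}$ termwise in the finite $[\mathcal{A}]$-sum is legitimate, and that the Hecke trick outputs for the individual shifted lattice sums assemble to the true value $L(E_n,1)$ rather than a spurious Eisenstein-like correction. Concretely, I would argue that the outer factor is holomorphic at $s=1$, that the sum over $[\mathcal{A}]$ is finite, and that the Hecke-regularized constant term of each inner sum coincides with its analytic continuation at $s=1$ (no pole appears because $\chi'_n$ is non-trivial on the relevant residue classes for $n>1$, and for $n=1$ one checks the potential pole cancels after summation). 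Once this analytic bookkeeping is settled, the identity is a direct substitution.
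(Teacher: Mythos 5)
Your proposal follows exactly the paper's route: the paper derives Proposition~\ref{main1} precisely by substituting Corollary~\ref{eisentheta} (itself the combination of Lemma~\ref{eisenstein1} and Lemma~\ref{thetatheta}) into the decomposition~\eqref{Ls1} and evaluating at $s=1$. Your additional care about justifying the termwise Hecke regularization over the finite class sum is a point the paper leaves implicit, but it does not change the argument.
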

  
As a byproduct of Proposition~\ref{main1}, one can actually formulate $L(E_{n},1)$ as a Galois trace for any $n=2^{e}m$ with $e=0$ or~$1$, and $m$ being odd and square-free. To this end, one needs two additional lemmas. 

\begin{lemma}\label{lemtheta}
     For $\mathcal{A}=\alpha_{\mathcal{A}}=[a,b+i]$ primitive with $\alpha_{\mathcal{A}}\equiv1\pmod{4\mathcal{O}_{K}}$, writing $\tau_{\mathcal{A}}=\frac{b+i}{a}$, one has
        $$
        \frac{\alpha_{\mathcal{A}}}{N(\alpha_{\mathcal{A}})}\Theta(\tau_{\mathcal{A}}/2)=\Theta\left(\frac{1-(-1)^{b}}{4}+\frac{i}{2}\right).
        $$

    %     \item For $\mathcal{B}=\alpha_{\mathcal{B}}=[a,b+i]$ primitive with $\alpha_{\mathcal{B}}\equiv3+2i\pmod{4\mathcal{O}_{K}}$ and $b\equiv0\pmod{2}$, writing $\tau_{\mathcal{B}}=\frac{b+i}{a}$, one has
    %     $$
    %     \frac{\alpha_{\mathcal{B}}}{N(\alpha_{\mathcal{B}})}\Theta(\tau_{\mathcal{B}})=-\frac{1}{2}\left(\Theta((1+i)/2)-\Theta((1+i)/2)\right).
    %     $$
    % \end{enumerate}
\end{lemma}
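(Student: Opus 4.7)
The plan is to realize the identity as the weight-one modular transformation of $\Theta(\tau)=\theta_{3}(2\tau)^{2}$, where $\theta_{3}(\sigma)=\sum_{n\in\mathbb{Z}}e^{\pi i n^{2}\sigma}$ is the classical Jacobi theta function, under an explicit element of $\Gamma_{0}(4)$. Write $\alpha_{\mathcal{A}}=s+ti$; the congruence $\alpha_{\mathcal{A}}\equiv 1\pmod{4\mathcal{O}_{K}}$ forces $s\equiv 1\pmod{4}$ and $t\equiv 0\pmod{4}$, so $a=s^{2}+t^{2}$ is odd. Since $b+i\in\mathcal{A}=\alpha_{\mathcal{A}}\mathcal{O}_{K}$, the quotient $\beta_{0}=(b+i)/\alpha_{\mathcal{A}}=u+vi$ lies in $\mathcal{O}_{K}$, and comparing real and imaginary parts of $(s+ti)(u+vi)=b+i$ yields
$$
b=su-tv,\qquad sv+tu=1.
$$

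Set $\varepsilon=(1-(-1)^{b})/2\in\{0,1\}$ and introduce
$$
\gamma=\begin{pmatrix} v & u-v\varepsilon\\ -t & s+t\varepsilon\end{pmatrix}.
$$
One checks that (i) $\det\gamma=v(s+t\varepsilon)+t(u-v\varepsilon)=sv+tu=1$, so $\gamma\in\mathrm{SL}_{2}(\mathbb{Z})$; (ii) $-t\equiv 0\pmod{4}$, so $\gamma\in\Gamma_{0}(4)$; and (iii) the base point $\varepsilon+i$ is carried to $\tau_{\mathcal{A}}$ by
$$
\gamma(\varepsilon+i)=\frac{v(\varepsilon+i)+u-v\varepsilon}{-t(\varepsilon+i)+s+t\varepsilon}=\frac{u+vi}{s-ti}=\frac{\alpha_{\mathcal{A}}\beta_{0}}{\alpha_{\mathcal{A}}\bar{\alpha}_{\mathcal{A}}}=\frac{b+i}{a}=\tau_{\mathcal{A}}.
$$
The weight-one transformation $\theta_{3}^{2}(\gamma\sigma)=\chi_{-4}(D)(C\sigma+D)\theta_{3}^{2}(\sigma)$ on $\Gamma_{0}(4)$ then gives
$$
\Theta(\tau_{\mathcal{A}}/2)=\theta_{3}^{2}(\tau_{\mathcal{A}})=\chi_{-4}(s+t\varepsilon)\bigl(-t(\varepsilon+i)+s+t\varepsilon\bigr)\theta_{3}^{2}(\varepsilon+i).
$$
Since $s+t\varepsilon\equiv 1\pmod{4}$, the character collapses to $1$ and the factor in parentheses reduces to $s-ti=\bar{\alpha}_{\mathcal{A}}$. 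Rewriting $\theta_{3}^{2}(\varepsilon+i)=\Theta((\varepsilon+i)/2)=\Theta((1-(-1)^{b})/4+i/2)$ and multiplying through by $\alpha_{\mathcal{A}}/N(\alpha_{\mathcal{A}})$ (using $\alpha_{\mathcal{A}}\bar{\alpha}_{\mathcal{A}}=a$) delivers the claim.

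The main obstacle is locating the correct matrix $\gamma$: the requirement $4\mid C$ together with $t\equiv 0\pmod{4}$ essentially forces $C=-t$, after which the bottom-right entry must be corrected by the parity term $\varepsilon$ so that the base point $\varepsilon+i$ maps to $\tau_{\mathcal{A}}$ and the automorphy factor reduces exactly to $\bar{\alpha}_{\mathcal{A}}$. The ansatz above is dictated by the factorization $b+i=\alpha_{\mathcal{A}}(u+vi)$, and integrality of every entry is automatic from $\beta_{0}\in\mathcal{O}_{K}$.
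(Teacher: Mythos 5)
Your proof is correct and follows essentially the same route as the paper's: both arguments produce an explicit matrix in a congruence subgroup carrying the CM point $\tau_{\mathcal{A}}$ to the base point $\frac{1-(-1)^{b}}{4}+\frac{i}{2}$ (you parametrize it via $\alpha_{\mathcal{A}}=s+ti$ and $(b+i)/\alpha_{\mathcal{A}}=u+vi$, the paper via $\alpha_{\mathcal{A}}=ja+k(b+i)$ and a matrix in $\Gamma_{1}(4)$ acting on $\tau_{\mathcal{A}}/2$) and then read the identity off the weight-one automorphy factor of $\Theta$. One small caveat: the transformation law you quote for $\theta_{3}^{2}(\gamma\sigma)$ is not valid on all of $\Gamma_{0}(4)$ in the variable $\sigma$ (it already fails for $\sigma\mapsto\sigma+1$, since $\theta_{3}(\sigma+1)=\theta_{4}(\sigma)$); it additionally requires the upper-right entry $B=u-v\varepsilon$ to be even so that the conjugated matrix $\begin{pmatrix}A&B/2\\2C&D\end{pmatrix}$ lies in $\Gamma_{0}(4)$ — this does hold for your $\gamma$ because $sv+tu=1$ with $t\equiv 0\pmod 4$ forces $v$ odd and $u\equiv b\pmod 2$, but it should be checked explicitly.
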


\begin{proof}
We begin by noting that $\alpha_{\mathcal{A}}=ja+k(b+i)$ with $k\equiv0\pmod{4}$ and $j\equiv1\pmod{4}$ and $(j,k)=1$. There are integers~$A$ and~$B$ such that $Aj-8kB=1$, so that $\gamma=\begin{pmatrix}
    A&B\\8k&j
\end{pmatrix}\in\Gamma_{1}(4)$. Then one can deduce by the modularity of $\Theta(\tau)$ that
$$
\Theta(\gamma(\tau_{\mathcal{A}}/2))=(4k\tau_{\mathcal{A}}+j)\Theta(\tau_{\mathcal{A}}/2)=\frac{\alpha_{\mathcal{A}}}{N(\alpha_{\mathcal{A}})}.
$$
Also, it is routine to check by assumption of $b\equiv1\pmod{2}$ that
$$
\gamma(\tau_{\mathcal{A}}/2)=\frac{1}{2}\frac{A\tau_{\mathcal{A}}+2B}{4k\tau_{\mathcal{A}}+j}\equiv\frac{1-(-1)^{b}}{4}+\frac{i}{2}\pmod{\mathbb{Z}}.
$$
This completes the proof.
\end{proof}

\begin{lemma}\label{thetathetagalois}
    Let $[\mathcal{A}]$ be a class  of $J_{\mathfrak{m}}^{*}/J_{\mathfrak{m},\mathbb{Z}}^{*}$ with $\mathcal{A}=\alpha_{\mathcal{A}}=[a,b+i]$ being primitive such that $\alpha_{\mathcal{A}}\equiv1\pmod{4}$ and $b$ is odd, and denote by $\sigma_{\mathcal{A}}$ the Galois element attached to $[\mathcal{A}]$. Then
    \begin{align*}
         &\left(\sum_{\substack{p_{1}\cdots p_{\ell}|{\rm rad}(n')\\ p_{1}\cdots p_{\ell}\equiv r\pmod{4}}}\frac{(-1)^{\ell+\frac{r-1}{2}}}{p_{1}\cdots p_{\ell}}\frac{\Theta\left(\frac{n'}{p_{1}\cdots p_{\ell}}(1+i)/2\right)}{\Theta\left((1+i)/2\right)}\right)^{\sigma_{\mathcal{A}}^{-1}}
         % \frac{\Theta\left(\frac{n}{p_{1}\cdots p_{\ell}}(1+i)\right)+\Theta_{\chi_{-4}}\left(\frac{n}{p_{1}\cdots p_{\ell}}(1+i)/2\right)}{\Theta(1+i)+\Theta_{\chi_{-4}}((1+i)/2)}\right)^{\sigma_{\mathcal{A}}^{-1}}
         =\sum_{\substack{p_{1}\cdots p_{\ell}|{\rm rad}(n')\\ p_{1}\cdots p_{\ell}\equiv r\pmod{4}}}\frac{(-1)^{\ell+\frac{r-1}{2}}}{p_{1}\cdots p_{\ell}}\frac{\Theta\left(\frac{n'}{p_{1}\cdots p_{\ell}}\tau_{\mathcal{A}}/2\right)}{\Theta\left(\tau_{\mathcal{A}}/2\right)}.
         % \frac{\Theta\left(\frac{n}{p_{1}\cdots p_{\ell}}\tau_{\mathcal{A}}\right)+\Theta_{\chi_{-4}}\left(\frac{n}{p_{1}\cdots p_{\ell}}\tau_{\mathcal{A}}/2\right)}{\Theta(\tau_{\mathcal{A}})+\Theta_{\chi_{-4}}(\tau_{\mathcal{A}}/2)}.
    \end{align*}

   Similarly,  let $[\mathcal{A}]$ be a class  of $J_{\mathfrak{m}}^{*}/J_{\mathfrak{m},\mathbb{Z}}^{*}$ with $\mathcal{A}=\alpha_{\mathcal{A}}=[a,b+i]$ being primitive such that $\alpha_{\mathcal{A}}\equiv1\pmod{4}$ and $b$ is even, and denote by $\sigma_{\mathcal{A}}$ the Galois element attached to $[\mathcal{A}]$. Then
    \begin{align*}
         &\left(\sum_{\substack{p_{1}\cdots p_{\ell}|{\rm rad}(n')\\ p_{1}\cdots p_{\ell}\equiv r\pmod{4}}}\frac{(-1)^{\ell+\frac{r-1}{2}}}{p_{1}\cdots p_{\ell}}\frac{\Theta\left(\frac{n'}{p_{1}\cdots p_{\ell}}(i/2)\right)}{\Theta\left(i/2\right)}\right)^{\sigma_{\mathcal{A}}^{-1}}
         % \frac{\Theta\left(\frac{n}{p_{1}\cdots p_{\ell}}(1+i)\right)+\Theta_{\chi_{-4}}\left(\frac{n}{p_{1}\cdots p_{\ell}}(1+i)/2\right)}{\Theta(1+i)+\Theta_{\chi_{-4}}((1+i)/2)}\right)^{\sigma_{\mathcal{A}}^{-1}}
         =\sum_{\substack{p_{1}\cdots p_{\ell}|{\rm rad}(n')\\ p_{1}\cdots p_{\ell}\equiv r\pmod{4}}}\frac{(-1)^{\ell+\frac{r-1}{2}}}{p_{1}\cdots p_{\ell}}\frac{\Theta\left(\frac{n'}{p_{1}\cdots p_{\ell}}\tau_{\mathcal{A}}/2\right)}{\Theta\left(\tau_{\mathcal{A}}/2\right)}.
         % \frac{\Theta\left(\frac{n}{p_{1}\cdots p_{\ell}}\tau_{\mathcal{A}}\right)+\Theta_{\chi_{-4}}\left(\frac{n}{p_{1}\cdots p_{\ell}}\tau_{\mathcal{A}}/2\right)}{\Theta(\tau_{\mathcal{A}})+\Theta_{\chi_{-4}}(\tau_{\mathcal{A}}/2)}.
    \end{align*} 
    % \begin{align*}
    %      &\left(\sum_{p_{1}\cdots p_{\ell}|{\rm rad}(n')}\frac{(-1)^{\ell}}{p_{1}\cdots p_{\ell}}\frac{\Theta\left(\frac{n'}{p_{1}\cdots p_{\ell}}(1+i)\right)}{\Theta(1+i)}\right)^{\sigma_{\mathcal{A}}^{-1}}\\
    %      &=\sum_{p_{1}\cdots p_{\ell}|{\rm rad}(n')}\frac{(-1)^{\ell}}{p_{1}\cdots p_{\ell}}\frac{\Theta\left(\frac{n'}{p_{1}\cdots p_{\ell}}\tau_{\mathcal{A}}\right)}{\Theta(\tau_{\mathcal{A}})}.
    % \end{align*}
    
\end{lemma}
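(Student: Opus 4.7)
The plan is to exploit the $\Q$-linearity of the Galois action to reduce the asserted identity to a term-by-term equality, and then invoke Lemma~\ref{thetagalois} (for the $b$-even case) or its direct analog at the base point $(1+i)/2$ (for the $b$-odd case). Since $\sigma_{\mathcal{A}}^{-1}$ is a field automorphism fixing $\Q$, it commutes with any finite $\Q$-linear combination of elements in the appropriate ring class field. It therefore suffices to establish, for each divisor $d=p_{1}\cdots p_{\ell}$ of $\mathrm{rad}(n')$, the individual identity
$$
\left(\frac{\Theta((n'/d)\tau_{0}/2)}{\Theta(\tau_{0}/2)}\right)^{\sigma_{\mathcal{A}}^{-1}} = \frac{\Theta((n'/d)\tau_{\mathcal{A}}/2)}{\Theta(\tau_{\mathcal{A}}/2)},
$$
where $\tau_{0}=i$ in the $b$-even case and $\tau_{0}=1+i$ in the $b$-odd case.

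For the $b$-even case, $\tau_{0}/2=i/2$, and the displayed identity is precisely Lemma~\ref{thetagalois} with $n$ replaced by $n'/d$; summing over $d$ with weights $(-1)^{\ell+(r-1)/2}/(p_{1}\cdots p_{\ell})$ gives the second assertion immediately. For the $b$-odd case, I would prove the following analog of Lemma~\ref{thetagalois}: for any positive divisor $N$ of $n'$, the ratio $\Theta(N(1+i)/2)/\Theta((1+i)/2)$ lies in the ring class field $H_{2N}$, and
$$
\left(\frac{\Theta(N(1+i)/2)}{\Theta((1+i)/2)}\right)^{\sigma_{\mathcal{A}}^{-1}} = \frac{\Theta(N\tau_{\mathcal{A}}/2)}{\Theta(\tau_{\mathcal{A}}/2)}.
$$
The argument parallels the proof of Lemma~\ref{thetagalois}: the function $\tau\mapsto \Theta(N\tau/2)/\Theta(\tau/2)$ is a modular function over $\Q$ for a suitable congruence subgroup, expressible as a rational function over $\Q$ in $j(2N\tau)$ and $j(\tau/2)$. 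The key observation is that the ideal $\mathcal{A}_{0}=[1,1+i]=\mathcal{O}_{K}$ represents the trivial class in $J_{\mathfrak{m}}^{*}/J_{\mathfrak{m},\Z}^{*}$ under the $b$-odd normalization, with $\tau_{\mathcal{A}_{0}}=1+i$, so the base point $\tau_{0}=1+i$ plays the role that $i$ played in Lemma~\ref{thetagalois}. Shimura reciprocity at $\tau_{0}$ then yields that $j(2N\tau_{0})$ and $j(\tau_{0}/2)$ transform under $\sigma_{\mathcal{A}}^{-1}$ to $j(2N\tau_{\mathcal{A}})$ and $j(\tau_{\mathcal{A}}/2)$ respectively.

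The main obstacle is the CM-theoretic bookkeeping in the $b$-odd case, specifically verifying that the action of $\sigma_{\mathcal{A}}^{-1}$ really sends $j(\tau_{0}/2)=j((1+i)/2)$ to $j(\tau_{\mathcal{A}}/2)$ for the chosen representatives. Since $(1+i)/2$ is $SL_{2}(\Z)$-equivalent to $i$, we have $j((1+i)/2)=j(i)$; on the other hand the lattice $\Z+(\tau/2)\Z$ at the various points $\tau=\tau_{\mathcal{A}}$ corresponds to proper ideals of different orders in $K$, and one must track how the ideal class $[\mathcal{A}]$ acts on these lattices in the corresponding ring class groups in order to match the Galois conjugate with the claimed value. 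Once this matching is established for the pair $(j(2N\tau),j(\tau/2))$ at $\tau=1+i$ versus $\tau=\tau_{\mathcal{A}}$, the term-by-term identity follows, and summing with the inclusion--exclusion weights completes the proof of the lemma.
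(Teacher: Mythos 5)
Your proposal is correct and follows essentially the same route as the paper, whose proof simply cites Lemma~\ref{thetagalois} together with the Shimura reciprocity law; your term-by-term reduction and the $j$-function/Shimura-reciprocity argument for the base point $(1+i)/2$ are exactly the details the authors leave implicit.
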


\begin{proof}
These follow from Lemma~\ref{thetagalois} together with the Shimura reciprocity law.
% We only prove the first assertion as an illustration. Let
% $$
% \Theta
% $$

% Let $d$ be a divisor of $n$.
%     Note by Lemma~\ref{thetagalois} that
%     $$
%     \left(\frac{\Theta(d(1+i))}{\Theta(1+i)}\right)^{\sigma_{\mathcal{A}}^{-1}}=\frac{\Theta(d\tau_{\mathcal{A}})}{\Theta(\tau_{\mathcal{A}})}.
%     $$
% In addition, by a similar reasoning as Lemma~\ref{thetagalois}, and   that $[[2a,a\tau_{\mathcal{A}}]]$ is a class of $J_{d}^{*}$, one can tell that
% $$
% \left(\frac{\Theta_{\chi_{-4}}(d(1+i)/2)}{\Theta_{\chi_{-4}}((1+i)/2)}\right)^{\sigma_{\mathcal{A}}^{-1}}=\frac{\Theta_{\chi_{-4}}(d\tau_{\mathcal{A}}/2)}{\Theta_{\chi_{-4}}(\tau_{\mathcal{A}}/2)}.
% $$
% Also, by the choice of $\alpha_{\mathcal{A}}\equiv1\pmod{4}$ and Lemmas~\ref{HH2} and~\ref{evalati}, it is clear that $\sigma_{\mathcal{A}}$ act trivially on 
% $$
% \frac{\Theta_{\chi_{-4}}((1+i)/2)}{\Theta(1+i)}=-3+2\sqrt{2}.
% $$
% Finally, the desired conclusion follows these together with noting that
% $$
%     \frac{\Theta(d(1+i))+\Theta_{\chi_{-4}}(d(1+i)/2)}{\Theta(1+i)+\Theta_{\chi_{-4}}((1+i)/2)}= \frac{\frac{\Theta(d(1+i))}{\Theta(1+i)}+\frac{\Theta_{\chi_{-4}}(d(1+i)/2)}{\Theta_{\chi_{-4}}((1+i)/2)}\times \frac{\Theta_{\chi_{-4}}((1+i)/2)}{\Theta(1+i)}}{1+\frac{\Theta_{\chi_{-4}}((1+i)/2)}{\Theta(1+i)}}.
% $$
\end{proof}

% \begin{lemma}
%     If $(\alpha_{\mathcal{A}})=[a,b+i]$ with $\alpha_{\mathcal{A}}\equiv1\pmod{4}$, then one has that
%     \begin{align*}
%       \frac{\alpha_{\mathcal{A}}}{N(\alpha_{\mathcal{A}})}&=\frac{\Theta(i)}{\Theta(\tau_{\mathcal{A}})}.
%     \end{align*}
%     \begin{align*}
%         \frac{\alpha_{\mathcal{A}}}{N(\mathcal{A})}=\frac{\Theta_{\chi_{-4}}(i/2)}{\Theta_{\chi_{-4}}(\tau_{\mathcal{A}}/2)}.
%     \end{align*}
  
%  % $$
%  %    \frac{\Theta(i)}{\Theta_{\chi_{-4}}(i/2)}=(1+\sqrt{2})^{2}.
%  %    $$

%  %    \begin{align*}
%  %        \Theta_{\chi_{-4}}(\tau_{\mathcal{A}}/2)&=(\frac{\sqrt{2}}{2}-(-1)^{\frac{b}{2}}\frac{1}{2})\Theta_{\chi_{-4}}(\tau_{\mathcal{A}}/4)\quad\mbox{if $b$ is even,}\\
%  %        \Theta_{\chi_{-4}}(\tau_{\mathcal{A}}/2)&=i(\frac{\sqrt{2}}{2}-(-1)^{\frac{b-1}{2}}\frac{1}{2})\Theta_{\chi_{-4}}(\tau_{\mathcal{A}}/4)\quad\mbox{if $b$ is odd.}
%  %    \end{align*}
% \end{lemma}

 Now we are ready to relate $L(E_{n},1)$ to a Galois trace of a theta CM value.

\begin{theorem}\label{main2}
Write 
$$
n'=\begin{cases}
    n,&\mbox{if $n$ is odd};\\
    m,&\mbox{if $n=2m$.}
\end{cases}
$$
and denote by ${\rm rad}(n')$ the radical of~$n'$. Then
$$
L(E_{n},1)=\begin{cases}
    \displaystyle{\frac{\pi}{4}\Theta((1+i)/2)\frac{1}{\sqrt{n}}{\rm Tr}_{H_{\mathfrak{m}}/K}(\sqrt{n}\Theta_{n}(1+i))},&\mbox{if $n$ is odd};\\
    \displaystyle{\frac{\pi}{4}\Theta(i/2)\frac{1}{\sqrt{n}}{\rm Tr}_{H_{\mathfrak{m}}/K}(\sqrt{n}\Theta_{n}(i))},&\mbox{if $n=2m$},
\end{cases}
$$
   %     $$
%     \frac{L(1,\tilde{\chi}_{n})}{\frac{\pi}{4}(\Theta(1+i)+\Theta_{\chi_{-4}}((1+i)/2))}=\frac{1}{\sqrt{n}}{\rm Tr}_{H_{\mathfrak{m}}/K}(\sqrt{n}\Theta_{n}(1+i)),
%     $$
    where
    $$
    \Theta_{n}(\tau)=\sum_{\substack{p_{1}\cdots p_{\ell}|{\rm rad}(n')\\ p_{1}\cdots p_{\ell}\equiv r\pmod{4}}}\frac{(-1)^{\ell+\frac{r-1}{2}}}{p_{1}\cdots p_{\ell}}\frac{\Theta\left(\frac{n'}{p_{1}\cdots p_{\ell}}\tau/2\right)}{\Theta(\tau/2)}.
    $$
\end{theorem}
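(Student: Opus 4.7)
The plan is to assemble Theorem~\ref{main2} directly by substituting Lemmas~\ref{lemtheta}, \ref{thetathetagalois}, and~\ref{lemchip} into the formula for $L(E_n,1)$ given in Proposition~\ref{main1}, thereby repackaging the sum over ideal classes as a Galois trace. First I would rewrite the identity of Proposition~\ref{main1} by pulling $\Theta(\tau_{\mathcal{A}}/2)$ out of the inner sum over squarefree divisors of $\mathrm{rad}(n')$, so that the summand attached to each class $[\mathcal{A}]$ takes the clean product form
$$
\frac{\chi'_{n}(\alpha_{\mathcal{A}})\,\alpha_{\mathcal{A}}}{N(\mathcal{A})}\,\Theta(\tau_{\mathcal{A}}/2)\cdot\Theta_{n}(\tau_{\mathcal{A}}).
$$

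Next, by Lemma~\ref{lemtheta}, the leading factor $\frac{\alpha_{\mathcal{A}}}{N(\mathcal{A})}\Theta(\tau_{\mathcal{A}}/2)$ collapses to the constant $\Theta((1+i)/2)$ when $b$ is odd (i.e.\ $n$ odd) and to $\Theta(i/2)$ when $b$ is even (i.e.\ $n=2m$); being independent of $[\mathcal{A}]$, it factors out of the sum. Then Lemma~\ref{thetathetagalois} identifies the remaining factor with the Galois translate $\Theta_n(\tau_{\mathcal{A}}) = (\Theta_n(1+i))^{\sigma_{\mathcal{A}}^{-1}}$ in the odd case (and similarly $(\Theta_n(i))^{\sigma_{\mathcal{A}}^{-1}}$ in the even case), while Lemma~\ref{lemchip} together with the fact that $\chi'_n$ is $\pm 1$--valued gives $\chi'_n(\alpha_{\mathcal{A}}) = (\sqrt{n})^{\sigma_{\mathcal{A}}^{-1}}/\sqrt{n}$. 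Combining these turns the summand into $\frac{1}{\sqrt{n}}\bigl(\sqrt{n}\,\Theta_n(\cdot)\bigr)^{\sigma_{\mathcal{A}}^{-1}}$, and summing over $[\mathcal{A}]$ produces the trace $\frac{1}{\sqrt{n}}\,\mathrm{Tr}_{H_{\mathfrak{m}}/K}(\sqrt{n}\,\Theta_n(\cdot))$, matching the statement.

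The delicate step will be the case $n=2m$. There Proposition~\ref{main1} restricts to those classes having a representative $\alpha_{\mathcal{A}}\equiv 1\pmod{4\mathcal{O}_K}$, which by Lemma~\ref{lemalpha}(2) is only half of $J_{\mathfrak{m}}^{*}/J_{\mathfrak{m},\mathbb{Z}}^{*}$, whereas the target trace runs over all of $\mathrm{Gal}(H_{\mathfrak{m}}/K)$. I expect this to be resolved by showing that the $\delta$-translate described in Lemma~\ref{lemalpha}(2) sends each "allowed" class to one whose Galois translate of $\sqrt{n}\,\Theta_n(i)$ coincides with the original, so that the restricted sum is genuinely half of the full trace and the factor of $2$ is absorbed correctly into the normalizations. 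Verifying this compatibility via Shimura reciprocity and the explicit form of $\chi'_n$ from Section~\ref{charchi}, together with confirming that $\sqrt{n}\,\Theta_n(\cdot)$ lies in $H_{\mathfrak{m}}$ so that Lemma~\ref{thetathetagalois} applies, is the main bookkeeping obstacle; the remaining steps are formal substitution.
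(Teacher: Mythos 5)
Your plan follows the paper's proof essentially verbatim: start from Proposition~\ref{main1}, use Lemma~\ref{lemtheta} to collapse $\frac{\alpha_{\mathcal{A}}}{N(\mathcal{A})}\Theta(\tau_{\mathcal{A}}/2)$ to the constant $\Theta((1+i)/2)$ (resp.\ $\Theta(i/2)$), use Lemma~\ref{lemchip} to rewrite $\chi'_n(\alpha_{\mathcal{A}})$ as $(\sqrt{n})^{\sigma_{\mathcal{A}}^{-1}}/\sqrt{n}$, and then invoke Lemma~\ref{thetathetagalois} to recognize the class sum as a Galois trace. The even-case bookkeeping you single out (the restricted sum over classes with $\alpha_{\mathcal{A}}\equiv 1\pmod{4\mathcal{O}_K}$ versus the full trace over ${\rm Gal}(H_{\mathfrak{m}}/K)$) is a genuine point, but the paper itself only writes out the odd case ``as an illustration'' and leaves exactly that verification to the reader, so your proposal is consistent with, and no less complete than, the paper's own argument.
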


\begin{proof}
We only prove the case of $n$ being odd as an illustration. Recall by Proposition~\ref{main1} that one has
\begin{align*}
    &L(E_{n},1)=\frac{\pi}{4}\Theta((1+i)/2)\sum_{\substack{[\mathcal{A}]\in J_{\mathfrak{m}}^{*}/J_{\mathfrak{m},\mathbb{Z}}^{*}\\\mathcal{A}=[a,b+i]\\b\,\,odd}}\frac{\chi'_{n}(\alpha_{\mathcal{A}})\alpha_{\mathcal{A}}}{N(\mathcal{A})}\sum_{\substack{p_{1}\cdots p_{\ell}|{\rm rad}(n')\\ p_{1}\cdots p_{\ell}\equiv r\pmod{4}}}\frac{(-1)^{\ell+\frac{r-1}{2}}}{p_{1}\cdots p_{\ell}}\frac{\Theta\left(\frac{n}{p_{1}\cdots p_{\ell}}\tau_{\mathcal{A}}/2\right)}{\Theta((1+i)/2)}.
    \end{align*}
Then apply Lemmas~\ref{lemchip},~\ref{thetatheta} and~\ref{lemtheta} to further deduce
\begin{align*}    
&L(E_{n},1)
% &=\sum_{\substack{[\mathcal{A}]\in J_{\mathfrak{m}}^{*}/J_{\mathfrak{m},\mathbb{Z}}^{*}\\\mathcal{A}=[a,b+i]\\b\,\,even}}\chi'_{n}(\alpha_{\mathcal{A}})\sum_{p_{1}\cdots p_{\ell}|{\rm rad}(n')}\frac{(-1)^{\ell}}{p_{1}\cdots p_{\ell}}\left(\frac{\Theta\left(\frac{n}{p_{1}\cdots p_{\ell}}\tau_{\mathcal{A}}\right)-\Theta_{\chi_{-4}}\left(\frac{n}{p_{1}\cdots p_{\ell}}\tau_{\mathcal{A}}/2\right)}{\Theta(\tau_{\mathcal{A}})-\Theta_{\chi_{-4}}(\tau_{\mathcal{A}}/2)}\right)\\
=\frac{\pi\Theta((1+i)/2)}{4\sqrt{n}}\sum_{\substack{[\mathcal{A}]\in J_{\mathfrak{m}}^{*}/J_{\mathfrak{m},\mathbb{Z}}^{*}\\\mathcal{A}=[a,b+i]\\b\,\,odd}}(\sqrt{n})^{\sigma_{\mathcal{A}}^{-1}}\sum_{\substack{p_{1}\cdots p_{\ell}|{\rm rad}(n')\\ p_{1}\cdots p_{\ell}\equiv r\pmod{4}}}\frac{(-1)^{\ell+\frac{r-1}{2}}}{p_{1}\cdots p_{\ell}}\frac{\Theta\left(\frac{n}{p_{1}\cdots p_{\ell}}\tau_{\mathcal{A}}/2\right)}{\Theta(\tau_{\mathcal{A}}/2)}.
\end{align*}
Finally, invoking Lemma~\ref{thetathetagalois} one obtains the desired formula
$$
L(E_{n},1)=
    \displaystyle{\frac{\pi}{4}\Theta((1+i)/2)\frac{1}{\sqrt{n}}{\rm Tr}_{H_{\mathfrak{m}}/K}(\sqrt{n}\Theta_{n}(1+i))},
$$
 % =\sum_{[\mathcal{A}]}\chi'_{n}(\alpha_{\mathcal{A}})\Theta_{n}(\tau_{\mathcal{A}})
  where
    $$
    \Theta_{n}(\tau)=\sum_{\substack{p_{1}\cdots p_{\ell}|{\rm rad}(n')\\ p_{1}\cdots p_{\ell}\equiv r\pmod{4}}}\frac{(-1)^{\ell+\frac{r-1}{2}}}{p_{1}\cdots p_{\ell}}\frac{\Theta\left(\frac{n'}{p_{1}\cdots p_{\ell}}\tau/2\right)}{\Theta(\tau/2)}.
    $$

\end{proof}

\section{Factorization lemma}\label{factorlem}

In this section, we assume $n=2^{e}m$ with $e=0$ or~$1$, and $m$ being odd without prime factors $p\equiv3\pmod{4}$. We aim to decompose a CM value of the modular function $\Theta_{n}(\tau)$ defined as in Theorem~\ref{main2} into a sum of CM values of certain modular functions constructed via theta functions of weight~$\frac{1}{2}$. 

The following lemma is due to Rosu \cite{R} via modifying the  factorization lemma of Rodriguez-Villegas and Zagier \cite{V2}.

\begin{lemma}\label{rosulem}
Let $a,D$ be positive integers, and let $\mu,\nu\in\mathbb{Q}$. Then for a complex number~$z$ with ${\rm Im}(z)=y>0$, one has
    $$
\sqrt{\frac{2ay}{D}}    \sum_{r\in\mathbb{Z}/D\mathbb{Z}}\theta_{[a\mu+\frac{ar}{D},\nu]}(Dz/a)\theta_{[\mu+\frac{r}{D},-a\nu]}(-aD\overline{z})=\sum_{m,n}e^{2\pi i(m\nu+nD\mu)}e^{\pi(mni-\frac{|n-mz|^{2}}{2y})\frac{D}{a}},
    $$
    where
    $$
    \theta_{[\mu,\nu]}(\tau)=\sum_{n\in\mathbb{Z}}e^{\pi i(n+\mu)^{2}\tau+2\pi i\nu (n+\mu)}.
    $$
\end{lemma}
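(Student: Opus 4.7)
The plan is to expand both theta functions on the left-hand side, collapse the sum over $r\in\mathbb{Z}/D\mathbb{Z}$ together with one of the series variables into a single integer sum via a bijective reindexing, and then apply Poisson summation in one variable to recognize the double sum on the right. Concretely, write the summation variable of $\theta_{[a\mu+ar/D,\nu]}(Dz/a)$ as $k\in\mathbb{Z}$ and that of $\theta_{[\mu+r/D,-a\nu]}(-aD\overline{z})$ as $\ell\in\mathbb{Z}$, and set $u=k+a\mu+ar/D$, $v=\ell+\mu+r/D$. Expanding using $z=x+iy$, the joint exponent is $\pi iD(u^2z/a-av^2\overline z)+2\pi i\nu(u-av)$. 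The linear part collapses at once, since $u-av=k-a\ell\in\mathbb{Z}$, so all dependence on $\mu$ and $r$ is confined to the quadratic piece. Setting $m=k-a\ell$ and $S=u+av=m+2a\mu+2a(\ell+r/D)$, the elementary identities $u^2/a-av^2=Sm/a$ and $u^2/a+av^2=(S^2+m^2)/(2a)$ decouple this quadratic into an $m$-only factor and an $S$-only factor.

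Next, I would verify that $(k,\ell,r)\mapsto (m,s):=(k-a\ell,\,D\ell+r)$ is a bijection $\mathbb{Z}\times\mathbb{Z}\times(\mathbb{Z}/D\mathbb{Z})\to\mathbb{Z}^2$: reducing $s$ mod $D$ recovers $r$, then $\ell=(s-r)/D$, then $k=m+a\ell$. Under this substitution $S=m+2a\mu+2as/D$, and for each fixed $m$ the inner sum becomes a one-dimensional complex Gaussian $\sum_{s\in\mathbb{Z}}\exp(-2\pi ays^2/D+(\text{linear in }s))$. I would then apply Poisson summation to this $s$-sum; the Gaussian Fourier transform yields a Jacobian factor $\sqrt{D/(2ay)}$ which is precisely cancelled by the prefactor $\sqrt{2ay/D}$ on the left.

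Completing the square in the Poisson-transformed sum introduces the dual variable $n\in\mathbb{Z}$. A direct computation shows the phase contribution $e^{\pi iD\alpha n/a}$ with $\alpha=m+2a\mu$, which splits as $e^{\pi iDmn/a}\cdot e^{2\pi iDn\mu}$, while the quadratic part reassembles as $e^{-\pi D m^2y/(2a)-\pi D(n-mx)^2/(2ay)}=e^{-\pi D|n-mz|^2/(2ay)}$ since $|n-mz|^2=(n-mx)^2+m^2y^2$. Combined with the surviving $e^{2\pi i m\nu}$ from the original linear term, the total becomes $e^{2\pi i(m\nu+nD\mu)}e^{\pi(mni-|n-mz|^2/(2y))D/a}$, matching the right-hand side.

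The main obstacle is pure bookkeeping: one must track the two independent phases in $\mu$ and $\nu$ separately through the change of variables and Poisson duality and verify that the $\nu$-phase survives untouched as $e^{2\pi i m\nu}$ (because $u-av$ is an integer depending only on $m$), while the $\mu$-phase is converted by the Fourier dual into $e^{2\pi i Dn\mu}$. A secondary subtlety is correctly matching sign conventions from the antiholomorphic argument $-aD\overline z$, which is what produces the cancellation of $x$-dependence in the \emph{imaginary} part of the quadratic form and leaves only the clean expression $|n-mz|^2$ on the decay side. Once these are handled the identity is purely computational, and no deeper input is needed beyond standard Poisson summation.
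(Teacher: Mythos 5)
Your proof is correct: the reindexing $(k,\ell,r)\mapsto(m,s)=(k-a\ell,\,D\ell+r)$ is indeed a bijection, the decoupling identities $u^2/a-av^2=mS/a$ and $u^2/a+av^2=(S^2+m^2)/(2a)$ hold, and carrying out the Poisson summation in $s$ (with Gaussian parameter $2ay/D$, whose transform supplies exactly the factor $\sqrt{D/(2ay)}$ cancelling the prefactor) reproduces the stated exponent $2\pi i(m\nu+nD\mu)+\pi\bigl(mni-|n-mz|^2/(2y)\bigr)D/a$ term by term. Note that the paper itself gives no proof of this lemma, citing it as Rosu's modification of the Rodriguez--Villegas--Zagier factorization lemma; your Poisson-summation computation is precisely the standard argument behind that result, so nothing further is needed.
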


\begin{corollary}\label{cordd}
For positive integers $a,a_{1},D$, where $D$ has no prime factors~$p\equiv3\pmod{4}$, take an integer~$b$ such that $b^{2}+1\equiv0\pmod{Da^{2}a_{1}}$. Then the following equality holds:
    % Take~$D\equiv1\pmod{4}$, $\mu=0$, and $\nu=\frac{1}{2}$. Let $z=\tau_{\mathcal{A}}=\frac{b+i}{Daa_{1}}$ with $b^{2}+1\equiv0\pmod{Da^{2}a_{1}}$ and $a\equiv a_{1}\equiv1\pmod{4}$. Then
    $$
    \sqrt{\frac{2}{D^{2}a_{1}}}\sum_{r=0}^{D-1}\theta_{[ar/D,1/2]}(\tau_{\mathcal{A}^{2}\mathcal{A}_{1}})\overline{\theta_{[r/D,1/2]}(\tau_{\mathcal{A}_{1}})}=\sum_{m,n}(-1)^{m}e^{\pi(mni-\frac{|nDaa_{1}-m(b+i)|^{2}}{2Daa_{1}})\frac{D}{a}},
    $$
    where $\tau_{\mathcal{A}}=\frac{b+i}{a}$ denotes the CM point induced by the quadratic integral ideal $[a,b+i]$ of norm~$a$.
\end{corollary}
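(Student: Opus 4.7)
The plan is to deduce Corollary~\ref{cordd} by a direct specialization of Lemma~\ref{rosulem} with parameters realizing the CM points $\tau_{\mathcal{A}_{1}}=(b+i)/a_{1}$ and $\tau_{\mathcal{A}^{2}\mathcal{A}_{1}}=(b+i)/(a^{2}a_{1})$.

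First I would take $\mu=0$, $\nu=1/2$, and $z=(b+i)/(aDa_{1})$ in Lemma~\ref{rosulem}. The hypothesis $b^{2}+1\equiv0\pmod{Da^{2}a_{1}}$ ensures both that $\mathcal{A}_{1}=[a_{1},b+i]$ and $\mathcal{A}^{2}\mathcal{A}_{1}=[a^{2}a_{1},b+i]$ are primitive integral ideals, and that $a$ must be odd, since otherwise $4\mid a^{2}\mid Da^{2}a_{1}$ would force $b^{2}\equiv-1\pmod{4}$, which has no solution. With these choices, $y=\mathrm{Im}(z)=1/(aDa_{1})$, so a short computation gives $Dz/a=\tau_{\mathcal{A}^{2}\mathcal{A}_{1}}$ and $-aD\overline{z}=-\overline{\tau_{\mathcal{A}_{1}}}$.

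Next I would verify the left-hand side. The prefactor $\sqrt{2ay/D}$ collapses to $\sqrt{2/(D^{2}a_{1})}$, matching the corollary. The first theta becomes $\theta_{[ar/D,1/2]}(\tau_{\mathcal{A}^{2}\mathcal{A}_{1}})$ directly. For the second theta $\theta_{[r/D,-a/2]}(-\overline{\tau_{\mathcal{A}_{1}}})$, I would combine the complex-conjugation identity $\overline{\theta_{[\mu,\nu]}(\tau)}=\theta_{[\mu,-\nu]}(-\overline{\tau})$ (for real $\mu,\nu$) with the quasi-periodicity $\theta_{[\mu,\nu+k]}=e^{2\pi ik\mu}\theta_{[\mu,\nu]}$ for $k\in\mathbb{Z}$ to identify it with $\overline{\theta_{[r/D,1/2]}(\tau_{\mathcal{A}_{1}})}$; the integer shift from $-a/2$ to $-1/2$ is legitimate precisely because $a$ is odd.

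The right-hand side is an elementary substitution: $e^{2\pi i(m\nu+nD\mu)}=(-1)^{m}$ since $\nu=1/2$ and $\mu=0$, while the identity $|n-mz|^{2}/(2y)=|nDaa_{1}-m(b+i)|^{2}/(2Daa_{1})$ follows by plugging in the chosen $z$ and $y$; multiplication by $D/a$ recovers the corollary's exponent. The main obstacle I anticipate is the careful phase bookkeeping in the second theta: the quasi-periodicity shift from $-a/2$ to $-1/2$ introduces a root-of-unity factor $e^{\pi i(1-a)r/D}$ in each summand. One must check that, under the hypothesis $b^{2}+1\equiv0\pmod{Da^{2}a_{1}}$ (which forces $a\equiv1\pmod{4}$ and constrains the interaction between $a$ and $D$), this factor either vanishes pointwise or is absorbed by a reindexing $r\mapsto ar\bmod D$ using the coprimality that follows from the hypothesis. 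This phase reconciliation is the technical heart of the argument.
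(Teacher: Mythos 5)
Your substitution is exactly the paper's proof: the authors' entire argument is ``take $\mu=0$, $\nu=\tfrac12$, $a=N(\mathcal{A})$, and $z=\frac{b+i}{Daa_{1}}$ in Lemma~\ref{rosulem},'' and your identifications of the prefactor $\sqrt{2ay/D}=\sqrt{2/(D^{2}a_{1})}$, of $Dz/a=\tau_{\mathcal{A}^{2}\mathcal{A}_{1}}$ and $-aD\overline{z}=-\overline{\tau_{\mathcal{A}_{1}}}$, and of the right-hand side are all correct. The one point you leave open --- the phase $e^{\pi i(1-a)r/D}$ produced in converting $\theta_{[r/D,-a/2]}(-\overline{\tau_{\mathcal{A}_{1}}})$ into $\overline{\theta_{[r/D,1/2]}(\tau_{\mathcal{A}_{1}})}$ --- is genuinely present: since $a$ is odd the summand ratio $e^{-\pi i(a-1)(n+r/D)}$ is the constant $e^{\pi i(1-a)r/D}$, which is a nontrivial root of unity unless $a\equiv1\pmod{2D}$, so it does \emph{not} vanish pointwise and must be absorbed by a reindexing or by the later Shimura-reciprocity normalization of the $f_{r,D}$. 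The paper's one-line proof is silent on this phase, so your write-up identifies the only real subtlety more explicitly than the source does, even though neither you nor the paper actually carries out the reconciliation.
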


\begin{proof}
    Take~$D\equiv1\pmod{4}$, $a=N(\mathcal{A})$, $\mu=0$, and $\nu=\frac{1}{2}$. Let $z=\frac{b+i}{Daa_{1}}$ in Lemma~\ref{rosulem}.
\end{proof}

The following  technical lemmas will be useful for relating the above double sum on the right hand
side to a CM value of $\Theta_{n}(\tau)$.

\begin{lemma}\label{lemparity}
    Given an integral ideal $[a,b+i]$ with $a\equiv1\pmod{4}$, and $b$ being odd, one has 
        \begin{align*}
            \{\frac{|na-m(b+i)|^{2}}{a}:\,\, n=2\ell,\,m=2k\}&=\{n^{2}+m^{2}:\,\, n=2\ell,\,m=2k\},\\
             \{\frac{|na-m(b+i)|^{2}}{a}:\,\, n=2\ell+1,\,m=2k\}&=\{n^{2}+m^{2}:\,\, n=2\ell,\,m=2k+1\},\\
              \{\frac{|na-m(b+i)|^{2}}{a}:\,\, n=2\ell,\,m=2k+1\}&=\{n^{2}+m^{2}:\,\, n=2\ell+1,\,m=2k+1\},\\
               \{\frac{|na-m(b+i)|^{2}}{a}:\,\, n=2\ell+1,\,m=2k+1\}&=\{n^{2}+m^{2}:\,\, n=2\ell+1,\,m=2k\}.
        \end{align*}
Similarly, for $b$ being even, one has
 \begin{align*}
            \{\frac{|na-m(b+i)|^{2}}{a}:\,\, n=2\ell,\,m=2k\}&=\{n^{2}+m^{2}:\,\, n=2\ell,\,m=2k\},\\
             \{\frac{|na-m(b+i)|^{2}}{a}:\,\, n=2\ell+1,\,m=2k\}&=\{n^{2}+m^{2}:\,\, n=2\ell,\,m=2k+1\},\\
              \{\frac{|na-m(b+i)|^{2}}{a}:\,\, n=2\ell,\,m=2k+1\}&=\{n^{2}+m^{2}:\,\, n=2\ell+1,\,m=2k\},\\
               \{\frac{|na-m(b+i)|^{2}}{a}:\,\, n=2\ell+1,\,m=2k+1\}&=\{n^{2}+m^{2}:\,\, n=2\ell+1,\,m=2k+1\}.
        \end{align*}
    % \begin{enumerate}
    %     \item 
    %     \item if $b$ is odd, then
    %     \begin{align*}
    %         \{\frac{|na-m(b+i)|^{2}}{a}:\,\, n=2\ell,\,m=2k\}&=\{n^{2}+m^{2}:\,\, n=2\ell,\,m=2k\},\\
    %          \{\frac{|na-m(b+i)|^{2}}{a}:\,\, n=2\ell+1,\,m=2k\}&=\{n^{2}+m^{2}:\,\, n=2\ell+1,\,m=2k\,or\,2k+1\},\\
    %           \{\frac{|na-m(b+i)|^{2}}{a}:\,\, n=2\ell,\,m=2k+1\}&=\{n^{2}+m^{2}:\,\, n=2\ell+1,\,m=2k+1\,or\,2k\},\\
    %            \{\frac{|na-m(b+i)|^{2}}{a}:\,\, n=2\ell+1,\,m=2k+1\}&=\{n^{2}+m^{2}:\,\, n=2\ell,\,m=2k+1\},
    %     \end{align*}
    %     where the possibilities for $m$ in the middle two cases are mutually exclusive.
    % \end{enumerate}
\end{lemma}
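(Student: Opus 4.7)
The plan is to view $\frac{|na-m(b+i)|^{2}}{a}$ as the norm form of the primitive integral ideal $[a,b+i]\subset\mathbb{Z}[i]$ evaluated at $(n,m)\in\mathbb{Z}^{2}$. Since $\mathbb{Z}[i]$ is a principal ideal domain and this form has discriminant $-4$, it is properly equivalent to the principal form $X^{2}+Y^{2}$, so all eight equalities in the lemma will follow once an explicit change of variables realizing this equivalence is written down and the induced bijection on the parity classes of $(n,m)$ is compared with those of $(X,Y)$.

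Concretely, I would first write $[a,b+i]=\pi\mathcal{O}_{K}$ with $\pi=r+si$ and $r^{2}+s^{2}=a$, and set $p+qi:=(b+i)/\pi\in\mathcal{O}_{K}$. A short computation then gives
\[
\frac{na-m(b+i)}{\pi}=n\bar{\pi}-m(p+qi)=(nr-mp)-(ns+mq)i,
\]
whence $\frac{|na-m(b+i)|^{2}}{a}=X^{2}+Y^{2}$ with $X=nr-mp$ and $Y=ns+mq$. Comparing real and imaginary parts of $b+i=(r+si)(p+qi)$ yields $b=rp-sq$ and $rq+sp=1$, so the matrix of $(n,m)\mapsto(X,Y)$ has determinant $rq+sp=1$ and lies in $SL_{2}(\mathbb{Z})$. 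In particular $(n,m)\mapsto(X,Y)$ is a bijection of $\mathbb{Z}^{2}$ that descends to a bijection on $(\mathbb{Z}/2\mathbb{Z})^{2}$, reducing the lemma to a purely mod-$2$ comparison of the two induced parity maps.

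For the parity bookkeeping, $a\equiv 1\pmod{4}$ forces exactly one of $r,s$ to be odd. When $b$ is odd one has $c:=(b^{2}+1)/a\equiv 2\pmod{4}$ and hence both $p,q$ are odd; when $b$ is even one has $c\equiv 1\pmod{4}$, so exactly one of $p,q$ is odd, with the relation $b=rp-sq$ pinning down which one in terms of the parities of $r,s$. Substituting these into $X\equiv nr-mp$ and $Y\equiv ns+mq\pmod{2}$ in each of the four resulting subcases, I would verify by direct inspection that the induced map on $(\mathbb{Z}/2\mathbb{Z})^{2}$ agrees with the parity correspondence claimed in the lemma up to possibly interchanging the two coordinates of $(X,Y)$, which leaves $X^{2}+Y^{2}$ unchanged and hence does not affect the represented set. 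The main obstacle is purely organizational: one must tabulate all these parity subcases and confirm that in each the induced map is either the lemma's assertion on the nose or differs from it only by the involution $(X,Y)\leftrightarrow(Y,X)$.
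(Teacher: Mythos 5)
Your proposal is correct and follows essentially the same route as the paper: both arguments use the class number one property of $\mathbb{Z}[i]$ to produce an explicit $SL_{2}(\mathbb{Z})$ change of variables carrying the norm form $an^{2}-2bnm+cm^{2}$ to $x^{2}+y^{2}$, pin down the parities of the matrix entries from $a\equiv1\pmod 4$ and the parity of $b$ (equivalently of $c$), and then read off the induced bijection on $(\mathbb{Z}/2\mathbb{Z})^{2}$, using the isometry $(x,y)\mapsto(y,x)$ of $x^{2}+y^{2}$ to absorb the coordinate-swap ambiguity. The only cosmetic difference is that you build the equivalence matrix explicitly from a generator $\pi$ of the ideal, whereas the paper asserts its existence abstractly and extracts the parity constraints from the resulting identities among its entries.
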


\begin{proof}
We only prove the first assertion as an illustration.    Suppose that $b^{2}-ac=-1$. Then
$\frac{|na-m(b+i)|^{2}}{a}$ corresponds to the quadratic form $aX^{2}-2bXY+cY^{2}$. Since $\mathbb{Z}[i]$ is of class number~1, then $aX^{2}-2bXY+cY^{2}$ is equivalent to $x^{2}+y^{2}$ by some $\gamma=\begin{pmatrix}
    A&B\\C&D
\end{pmatrix}\in {\rm SL}_{2}(\mathbb{Z})$ via 
$$
aX^{2}-2bXY+cY^{2}=(AX+BY)^{2}+(CX+DY)^{2},
$$
 which yields that
\begin{align*}
    AD-BC&=1,\\
    A^{2}+C^{2}&=a,\\
    AB+CD&=-b,\\
    B^{2}+D^{2}&=c.
\end{align*}
% Recall that $b^{2}-ac=-1$. So when $b$ is odd, $c$ must be even. 
Since $a$ is odd, then $A$ and $C$ must have different parities. Since $\begin{pmatrix}
    0&-1\\1&0
\end{pmatrix}$
is an isometry of $x^{2}+y^{2}$, and $\begin{pmatrix}
    0&-1\\1&0
\end{pmatrix}\begin{pmatrix}
    A&B\\C&D
\end{pmatrix}=\begin{pmatrix}
    -C&-D\\A&B
\end{pmatrix}$, then upon this, one can just assume $A$ to be even. So, $C$ is odd, and therefore, both $B$ and $D$ are odd. The conclusions  just follow from these together with the relations
$$
x=AX+BY,\quad\mbox{and}\quad y=CX+DY
$$
and simple analysis for their parities according to that of $X$ and $Y$.
%The justification of part (2) is similar, so we omit the details.
\end{proof}

\begin{lemma}\label{lemaab}
    Let  $\mathcal{A}=(\alpha_{\mathcal{A}})=[a,b_{\mathcal{A}}+i]$ and $\mathcal{A}=(\alpha_{\mathcal{A}_{1}})=[a_{1},b_{\mathcal{A}_{1}}+i]$ be two primitive integral ideals with $a,a_{1}\equiv1\pmod{4}$. Then there is a number $b$ such that $b^{2}\equiv-1\pmod{Da^{2}a_{1}}$ for a given number $D$ that has no prime factors~$p\equiv3\pmod{4}$. In this case, write
    $$
    \mathcal{A}=[a,b+i]\quad{and}\quad \mathcal{A}_{1}=[a_{1},b+i].
    $$
\end{lemma}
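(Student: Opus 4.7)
The lemma has two assertions, and the plan is to treat each with a CRT-plus-Hensel argument. The first is the existence of $b$ with $b^{2}\equiv -1\pmod{Da^{2}a_{1}}$. The key observation is that because $\mathcal{A}=[a,b_{\mathcal{A}}+i]$ is a primitive integral ideal of $\mathcal{O}_K=\mathbb{Z}[i]$ of norm $a$, the congruence $b_{\mathcal{A}}^{2}\equiv -1\pmod{a}$ forces every prime divisor of $a$ to split in $K$, i.e. to be $\equiv 1\pmod 4$; combined with $a\equiv 1\pmod 4$ this also rules out the prime~$2$. The same holds for $a_{1}$. Together with the hypothesis that $D$ has no prime factor $\equiv 3\pmod 4$, every odd prime factor of $Da^{2}a_{1}$ is therefore $\equiv 1\pmod 4$, so $-1$ is a quadratic residue modulo each such prime and, by Hensel's lemma, modulo each of its powers.

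\textbf{Execution.} The steps proceed in order as follows. First, apply Hensel's lemma to lift $b_{\mathcal{A}}$ uniquely to $\tilde b_{\mathcal{A}}\pmod{a^{2}}$ with $\tilde b_{\mathcal{A}}^{2}\equiv -1\pmod{a^{2}}$; this is valid because $a$ is odd and $\gcd(b_{\mathcal{A}},a)=1$. Next, pick any $b_{0}\pmod{D}$ with $b_{0}^{2}\equiv -1\pmod{D}$, provided by the same Hensel/CRT reasoning applied prime by prime to $D$. Then use CRT to solve
\[
b\equiv \tilde b_{\mathcal{A}}\pmod{a^{2}},\qquad b\equiv b_{\mathcal{A}_{1}}\pmod{a_{1}},\qquad b\equiv b_{0}\pmod{D}.
\]
The resulting $b$ automatically satisfies $b^{2}\equiv -1\pmod{Da^{2}a_{1}}$. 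For the second assertion, the congruence $b\equiv b_{\mathcal{A}}\pmod{a}$ implies $b+i\in\mathcal{A}$, hence $[a,b+i]\subseteq\mathcal{A}$, and equality follows by matching norms; an identical argument gives $\mathcal{A}_{1}=[a_{1},b+i]$.

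\textbf{Main obstacle.} The only genuine subtlety is CRT-compatibility on primes $p$ dividing more than one of the three moduli $a^{2}$, $a_{1}$, $D$. At such a $p$, the prescribed residues are all square roots of $-1$ and so agree only up to sign. The sign ambiguity on primes $p\mid\gcd(aa_{1},D)$ can always be absorbed by replacing $b_{0}$ by $-b_{0}$ prime by prime, since $b_{0}$ is chosen freely; on $p\mid\gcd(a,a_{1})$, one must argue separately that the ideal-theoretic choices of $b_{\mathcal{A}}$ and $b_{\mathcal{A}_{1}}$ are compatible (in the intended application inside Corollary~\ref{cordd} the ideals are chosen so that this holds, and in the generic case $\gcd(a,a_{1})=1$ the issue disappears). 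This sign bookkeeping at shared primes is the one place where the argument is not a purely formal Hensel/CRT computation; every other step is automatic.
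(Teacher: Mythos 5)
Your proof is correct and takes essentially the same route as the paper's, whose entire argument is that primitivity of the ideals forces every prime factor of $a$ and $a_{1}$ to be $\equiv 1\pmod{4}$, after which the Chinese Remainder Theorem (with the implicit Hensel lifting you spell out) gives $b$. The compatibility subtlety you flag at primes dividing $\gcd(a,a_{1})$ or $\gcd(aa_{1},D)$ is genuine but is passed over silently in the paper; it is harmless in the intended application since the representatives can be chosen with coprime norms.
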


\begin{proof}
    Since both $\mathcal{A}$ and $\mathcal{A}_{1}$ are primitive, then both $a$ and $a_{1}$ have prime factors that are congruent to~1 modulo~4 only. The conclusion therefore follows from the Chinese Remainder Theorem.
\end{proof}

\begin{lemma}\label{lemtheta1}
Define
$$
{\Theta}_{o}(\tau)=\sum_{m,n\,odd}q^{m^{2}+n^{2}}.
$$
Then
    \begin{align*}
  \Theta(\tau)+ {\Theta}_{o}(\tau/4)&=\Theta(\tau/2).
    \end{align*}
\end{lemma}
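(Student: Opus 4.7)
The identity is essentially a statement about the $\mathbb{Z}^2$ lattice versus its index--$2$ sublattice $\{(j,k):j+k\equiv0\!\pmod{2}\}$, so the natural plan is to split the theta series on the right according to the parity of $j+k$. Write $q=e^{2\pi i\tau}$, so that
$$
\Theta(\tau/2)=\sum_{(j,k)\in\mathbb{Z}^2}q^{(j^2+k^2)/2}=\sum_{\substack{(j,k)\\j+k\,\text{even}}}q^{(j^2+k^2)/2}+\sum_{\substack{(j,k)\\j+k\,\text{odd}}}q^{(j^2+k^2)/2}.
$$
The first sum collects the integer--exponent contributions (where $j^{2}+k^{2}$ is even), and the second collects the half--integer exponent contributions; these will be matched with $\Theta(\tau)$ and $\Theta_{o}(\tau/4)$ respectively.

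For the first sum I would use the change of variables $j'=(j+k)/2,\ k'=(j-k)/2$, which sets up a bijection from $\{(j,k)\in\mathbb{Z}^{2}:j+k\equiv 0\!\pmod{2}\}$ onto $\mathbb{Z}^{2}$ (with inverse $(j,k)=(j'+k',j'-k')$). Since $(j+k)^{2}+(j-k)^{2}=2(j^{2}+k^{2})$, the exponent transforms as $(j^{2}+k^{2})/2=j'^{\,2}+k'^{\,2}$, and hence the first sum equals $\sum_{j',k'}q^{j'^{\,2}+k'^{\,2}}=\Theta(\tau)$.

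For the second sum I would use the analogous substitution $m=j+k,\ n=j-k$. When $j+k$ is odd, so is $j-k$, and conversely any odd pair $(m,n)$ arises from a unique pair $(j,k)=((m+n)/2,(m-n)/2)\in\mathbb{Z}^{2}$ with $j+k=m$ odd; thus this gives a bijection $\{(j,k):j+k\text{ odd}\}\leftrightarrow\{(m,n):m,n\text{ both odd}\}$. Again $m^{2}+n^{2}=2(j^{2}+k^{2})$, so the exponent becomes $(j^{2}+k^{2})/2=(m^{2}+n^{2})/4$, and the sum reduces to $\sum_{m,n\,\text{odd}}q^{(m^{2}+n^{2})/4}=\Theta_{o}(\tau/4)$.

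Adding the two pieces yields $\Theta(\tau/2)=\Theta(\tau)+\Theta_{o}(\tau/4)$. There is no real obstacle here: the proof is a one--line lattice decomposition plus a linear change of variables, and the only thing to verify carefully is that the maps $(j,k)\mapsto(j\pm k)/?$ really are bijections between the relevant subsets of $\mathbb{Z}^{2}$ and have the claimed effect on $j^{2}+k^{2}$, both of which are immediate.
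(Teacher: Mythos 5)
Your proof is correct and complete. The lattice decomposition of $\mathbb{Z}^{2}$ by the parity of $j+k$, combined with the rotation-by-$45^{\circ}$ change of variables $(j,k)\mapsto(j+k,j-k)$ (which doubles the quadratic form and hence halves the exponent appropriately), is exactly the right mechanism, and you check the two bijections carefully. The only point of comparison with the paper is that the paper does not actually prove this lemma: it simply labels the identity as classical and cites Cooper's book on Ramanujan's theta functions. So your argument is a genuinely self-contained replacement for a citation rather than an alternative to an existing argument; it buys the reader an elementary two-line verification in place of a trip to the literature, at no cost in length or generality.
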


\begin{proof}
    This result is classical. See, e.g., \cite[Chapter~3]{C17}.
\end{proof}

\begin{lemma}\label{parity}
Let $b$ be an odd number given by Lemma~\ref{lemaab}. Then
 
     \begin{align}
        \sum_{m,n\,\,even}(-1)^{m}e^{\pi(mni-\frac{|nDaa_{1}-m(b+i)|^{2}}{2Daa_{1}})\frac{D}{a}}&=\sum_{k,\ell}e^{2\pi i\frac{|\ell Daa_{1}-k(b+i)|^{2}}{Daa_{1}}D\frac{b+i}{a}}=\Theta(D\tau_{\mathcal{A}}),\label{odd1}\\
        \sum_{\substack{m\,even\\n\,odd}}(-1)^{m}e^{\pi(mni-\frac{|nDaa_{1}-m(b+i)|^{2}}{2Daa_{1}})\frac{p}{a}}&=e^{-\pi i\frac{b}{2}}\sum_{\substack{m\,even\\n\,odd}}e^{2\pi i\frac{|nDaa_{1}-m(b+i)|^{2}}{Daa_{1}}D\frac{b+i}{4a}},\\
   \sum_{\substack{m\,odd\\n\,even}}(-1)^{m}e^{\pi(mni-\frac{|nDaa_{1}-m(b+i)|^{2}}{2Daa_{1}})\frac{D}{a}}&=\sum_{\substack{m\,odd\\n\,even}}e^{2\pi i\frac{|nDaa_{1}-m(b+i)|^{2}}{Daa_{1}}D\frac{b+i}{4a}}=\Theta_{o}(D\tau_{\mathcal{A}}/4),\label{odd2}\\
%         where
% $$
%     \beta=\begin{cases}
%         \frac{b}{2}&\mbox{if $b$\,even}\\
%         b&\mbox{if $b$\,odd}.
%     \end{cases}
        \sum_{m,n\,\,odd}(-1)^{m}e^{\pi(mni-\frac{|npaa_{1}-m(b+i)|^{2}}{2Daa_{1}})\frac{D}{a}}&=-e^{-\pi i\frac{b}{2}}\sum_{m,n\,odd}e^{2\pi i\frac{|nDaa_{1}-m(b+i)|^{2}}{Daa_{1}}D\frac{b+i}{4a}}.
    %      where
    % $$
    % \beta=\begin{cases}
    %     1-b&\mbox{if $b$\,even}\\
    %     \frac{b}{2}&\mbox{if $b$\,odd}.
    % \end{cases}
    % $$
  \end{align}

   Similarly, let $b$ be an even number given by Lemma~\ref{lemaab}. Then
 \begin{align}
        \sum_{m,n\,\,even}(-1)^{m}e^{\pi(mni-\frac{|nDaa_{1}-m(b+i)|^{2}}{2Daa_{1}})\frac{D}{a}}&=\sum_{k,\ell}e^{2\pi i\frac{|\ell Daa_{1}-k(b+i)|^{2}}{Daa_{1}}D\frac{b+i}{a}}=\Theta(D\tau_{\mathcal{A}}),\label{even1}\\
        \sum_{\substack{m\,even\\n\,odd}}(-1)^{m}e^{\pi(mni-\frac{|nDaa_{1}-m(b+i)|^{2}}{2Daa_{1}})\frac{p}{a}}&=e^{-\pi i\frac{b}{2}}\sum_{\substack{m\,even\\n\,odd}}e^{2\pi i\frac{|nDaa_{1}-m(b+i)|^{2}}{Daa_{1}}D\frac{b+i}{4a}},\\
        \sum_{\substack{m\,odd\\n\,even}}(-1)^{m}e^{\pi(mni-\frac{|nDaa_{1}-m(b+i)|^{2}}{2Daa_{1}})\frac{D}{a}}&=-e^{-\pi i\frac{b}{2}}\sum_{\substack{m\,odd\\n\,even}}e^{2\pi i\frac{|nDaa_{1}-m(b+i)|^{2}}{Daa_{1}}D\frac{b+i}{4a}},\\
%         where
% $$
%     \beta=\begin{cases}
%         \frac{b}{2}&\mbox{if $b$\,even}\\
%         b&\mbox{if $b$\,odd}.
%     \end{cases}
%     $$
        \sum_{m,n\,\,odd}(-1)^{m}e^{\pi(mni-\frac{|npaa_{1}-m(b+i)|^{2}}{2Daa_{1}})\frac{D}{a}}&=\sum_{m,n\,odd}e^{2\pi i\frac{|nDaa_{1}-m(b+i)|^{2}}{Daa_{1}}D\frac{b+i}{4a}}=\Theta_{o}(D\tau_{\mathcal{A}}/4).\label{even2}
    %      where
    % $$
    % \beta=\begin{cases}
    %     1-b&\mbox{if $b$\,even}\\
    %     \frac{b}{2}&\mbox{if $b$\,odd}.
    % \end{cases}
    % $$
    \end{align}
\end{lemma}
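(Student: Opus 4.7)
\textbf{Proposal for the proof of Lemma~\ref{parity}.} The plan is to handle each of the eight claimed identities in two stages. First, I would match the exponentials on the two sides modulo $2\pi i\mathbb{Z}$, absorbing the prefactors $(-1)^{m}$ and $e^{-\pi i b/2}$ into the residue. Second, for the four identities that assert a closed-form evaluation in terms of $\Theta$ or $\Theta_{o}$, I would apply Lemma~\ref{lemparity} together with the principality of the lattice $[Daa_{1},b+i]$ in $\mathbb{Z}[i]$ to identify the resulting sums as theta values.

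For the first stage, use the hypothesis $b^{2}+1\equiv 0\pmod{Da^{2}a_{1}}$ to write $b^{2}+1=Da^{2}a_{1}c$ for an integer~$c$; observe that $c$ is even when $b$ is odd and odd when $b$ is even. Expanding the complex square and then substituting $b^{2}=Da^{2}a_{1}c-1$ to eliminate the non-integral $b^{2}/a$ term, one computes that the difference of the two exponents simplifies to
\[
\pi\Bigl(mni - \frac{|nDaa_{1}-m(b+i)|^{2}}{2Daa_{1}}\Bigr)\frac{D}{a} \;-\; 2\pi i\,\frac{|nDaa_{1}-m(b+i)|^{2}}{Daa_{1}}\cdot\frac{D(b+i)}{4a} \;=\; \pi i\Bigl(-\frac{bn^{2}D^{2}a_{1}}{2} + nmD^{2}aa_{1}c - \frac{m^{2}bDc}{2}\Bigr),
\]
the real parts cancelling identically. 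A direct case check, using $D\equiv 1\pmod 4$ and $a\equiv a_{1}\equiv 1\pmod 4$ (so in particular $(2\ell+1)^{2}D^{2}a_{1}\equiv 1\pmod 4$ for odd $n=2\ell+1$), then verifies in each of the eight parity combinations of $(b,m,n)$ that this residue reduces to $\pi i m+\pi i b/2$, $\pi i m$, or $0$ modulo $2\pi i\mathbb{Z}$, matching the prefactor on the right-hand side of the relevant identity. For instance, the term $-\pi i b n^{2}D^{2}a_{1}/2$ contributes exactly the required $-\pi i b/2$ modulo $2\pi i\mathbb{Z}$ whenever $n$ is odd, via the congruence $b(n^{2}D^{2}a_{1}-1)\equiv 0\pmod 4$.

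For the second stage, note that the ideal $[Daa_{1},b+i]\subset\mathcal{O}_{K}$ has norm $Daa_{1}$ and, because $\mathbb{Z}[i]$ is a PID, is principal, say equal to $(\alpha_{0})$. Writing every lattice element as $\alpha_{0}\beta$ with $\beta=j+pi\in\mathbb{Z}[i]$ gives the identity $|nDaa_{1}-m(b+i)|^{2}/(Daa_{1})=j^{2}+p^{2}$ together with a bijection $\mathbb{Z}^{2}\to\mathbb{Z}^{2}$, $(n,m)\mapsto(j,p)$. Applying Lemma~\ref{lemparity}, with the role of ``$a$'' there played by $Daa_{1}\equiv 1\pmod 4$, converts the parity class of $(n,m)$ into the corresponding parity class of $(j,p)$. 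For $(m,n)$ both even, the pair $(j,p)$ is also both even; pulling out the resulting factor of~$4$ combines with $D(b+i)/(4a)$ to give $D\tau_{\mathcal{A}}$, and the sum collapses to $\Theta(D\tau_{\mathcal{A}})$. For the remaining closed-form identity, the relevant parity class of $(n,m)$ corresponds via Lemma~\ref{lemparity} to $(j,p)$ both odd, and the sum is recognised as $\Theta_{o}(D\tau_{\mathcal{A}}/4)$.

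The main obstacle I anticipate is the bookkeeping in the first stage: there are eight parity sub-cases to work through, and the congruence used to absorb the half-integer contribution into the prefactor $e^{-\pi i b/2}$ is delicate, resting on the combined parity conditions on $b$, $n$, $m$ and on the hypothesis that $a$, $a_{1}$, and $D$ have no prime factors congruent to $3$ modulo $4$ (which forces $(2\ell+1)^{2}D^{2}a_{1}\equiv 1\pmod 4$ and ensures $c$ has the right parity).
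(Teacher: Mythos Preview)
Your proposal is correct and follows essentially the same approach as the paper: compute the difference of exponents and reduce modulo $2\pi i\mathbb{Z}$ (the paper only writes out the $m,n$ both even case explicitly, whereas you give the uniform formula $\pi i\bigl(-\tfrac{bn^{2}D^{2}a_{1}}{2}+nmD^{2}aa_{1}c-\tfrac{m^{2}bDc}{2}\bigr)$ covering all parity cases), and then invoke Lemma~\ref{lemparity} together with the principality of $[Daa_{1},b+i]$ to identify the closed-form theta values. The paper additionally cites Lemma~\ref{lemtheta1}, but that identity is really only needed in the subsequent Corollary~\ref{cortheta} where the parity pieces are recombined; your argument for the second equalities via the explicit bijection $(n,m)\mapsto(j,p)$ is self-contained.
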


\begin{proof}
The verification of the first equality in each part is straightforward; for example, for the first equality of~\eqref{odd1}, note that
 \begin{align*}
        &\pi(mni-\frac{|nDaa_{1}-m(b+i)|^{2}}{2Daa_{1}})\frac{D}{a}-2\pi i\frac{|\ell Daa_{1}-k(b+i)|^{2}}{Daa_{1}}D\frac{b+i}{a}\\
        &=\pi(4k\ell i-\frac{|2\ell Daa_{1}-2k(b+i)|^{2}}{2Daa_{1}})\frac{D}{a}-2\pi i\frac{|\ell Daa_{1}-k(b+i)|^{2}}{Daa_{1}}D\frac{b+i}{a}\\
        &=2\pi i\left(\frac{2k\ell D}{a} +\frac{|\ell Daa_{1}-k(b+i)|^{2}}{a^{2}a_{1}}i-\frac{|\ell Daa_{1}-k(b+i)|^{2}}{a^{2}a_{1}}b-\frac{|\ell Daa_{1}-k(b+i)|^{2}}{a^{2}a_{1}}i\right)\\
        &=2\pi i\left(\frac{2k\ell D(b^{2}+1)}{a}-\ell^{2}D^{2}a_{1}b-\frac{k^{2}(b^{2}+1)b}{a^{2}a_{1}}\right)\\
        &=0\pmod{2\pi i\mathbb{Z}}.
    \end{align*}
We leave the corresponding details of the others to the reader. Finally, the second equalities of~\eqref{odd1},~\eqref{odd2},~\eqref{even1} and~\eqref{even2} follow from Lemmas~\ref{lemparity} and~\ref{lemtheta1}.
% \begin{enumerate}
%     \item For $m=2k$ and $n=2\ell$,
%  \begin{align*}
%         &\pi(mni-\frac{|npaa_{1}-m(b+i)|^{2}}{2paa_{1}})\frac{p}{a}-2\pi i\frac{|\ell paa_{1}-k(b+i)|^{2}}{paa_{1}}p\frac{b+i}{a}\\
%         &=\pi(4k\ell i-\frac{|2\ell paa_{1}-2k(b+i)|^{2}}{2paa_{1}})\frac{p}{a}-2\pi i\frac{|\ell paa_{1}-k(b+i)|^{2}}{paa_{1}}p\frac{b+i}{a}\\
%         &=2\pi i\left(\frac{2k\ell p}{a} +\frac{|\ell paa_{1}-k(b+i)|^{2}}{a^{2}a_{1}}i-\frac{|\ell paa_{1}-k(b+i)|^{2}}{a^{2}a_{1}}b-\frac{|\ell paa_{1}-k(b+i)|^{2}}{a^{2}a_{1}}i\right)\\
%         &=2\pi i\left(\frac{2k\ell p(b^{2}+1)}{a}-\ell^{2}p^{2}a_{1}b-\frac{k^{2}(b^{2}+1)b}{a^{2}a_{1}}\right)\\
%         &=0\pmod{2\pi i\mathbb{Z}}.
%     \end{align*}

%     \item For $m=2k+1$ and $n=2\ell+1$,
%  \begin{align*}
%         &\pi(mni-\frac{|npaa_{1}-m(b+i)|^{2}}{2paa_{1}})\frac{p}{a}-2\pi i\frac{|(2\ell+1) paa_{1}-(2k+1)(b+i)|^{2}}{paa_{1}}p\frac{b+i}{4a}\\
%         &=\pi((2k+1)(2\ell+1) i-\frac{|(2\ell+1) paa_{1}-(2k+1)(b+i)|^{2}}{2paa_{1}})\frac{p}{a}-\pi i\frac{|(2\ell+1) paa_{1}-(2k+1)(b+i)|^{2}}{paa_{1}}p\frac{b+i}{2a}\\
%         &=\pi i\left(\frac{(2k+1)(2\ell+1) p}{a} -\frac{|(2\ell+1) paa_{1}-(2k+1)(b+i)|^{2}}{2a^{2}a_{1}}b\right)\\
%         &=\pi i\left(\frac{(2k+1)(2\ell+1) p(b^{2}+1)}{a}-\frac{(2\ell+1)^{2}p^{2}a_{1}b}{2}-\frac{(2k+1)^{2}(b^{2}+1)b}{2a^{2}a_{1}}\right).
%     \end{align*}
    
% \end{enumerate}
   
\end{proof}

As a consequence of  the preceding lemmas, one can decompose the CM value $\Theta(D\tau_{\mathcal{A}}/2)$ in terms of CM values of $\theta_{[r/D,1/2]}(\tau)$ as follows.

\begin{corollary}\label{cortheta}
    Let $D$ be a positive integer with no prime factors~$p\equiv3\pmod{4}$, and let 
    $$
    \mathcal{A}=[a,b+i]\quad\mbox{and}\quad \mathcal{A}_{1}=[a_{1},b+i]
    $$
    be primitive integral ideals with $a,a_{1}\equiv1\pmod{4}$, and $b$  such that $b^{2}\equiv-1\pmod{Da^{2}a_{1}}$. Then one has 
    $$
     \sqrt{\frac{2}{D^{2}a_{1}}}\sum_{r=0}^{D-1}\theta_{[ar/D,1/2]}(\tau_{\mathcal{A}^{2}\mathcal{A}_{1}})\overline{\theta_{[r/D,1/2]}(\tau_{\mathcal{A}_{1}})}=\Theta(D\tau_{\mathcal{A}}/2).
    $$
    In particular, when $D=1$, 
    $$
    \sqrt{\frac{2}{a_{1}}}\theta_{[0,1/2]}(\tau_{\mathcal{A}^{2}\mathcal{A}_{1}})\overline{\theta_{[0,1/2]}(\tau_{\mathcal{A}_{1}})}=\Theta(\tau_{\mathcal{A}}/2).
    $$
\end{corollary}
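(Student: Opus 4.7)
The plan is to combine Corollary~\ref{cordd} with the parity analysis encoded in Lemmas~\ref{lemparity} and~\ref{parity}, and then conclude via the classical theta identity of Lemma~\ref{lemtheta1}.

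First I would apply Corollary~\ref{cordd} directly, which rewrites the left-hand side as the double sum
$$
\sum_{m,n\in\mathbb{Z}}(-1)^{m}\exp\!\Bigl(\pi\bigl(mn\,i-\tfrac{|nDaa_{1}-m(b+i)|^{2}}{2Daa_{1}}\bigr)\tfrac{D}{a}\Bigr).
$$
Next, I would decompose this sum into the four pieces indexed by the parities of $m$ and $n$, and evaluate each piece using Lemma~\ref{parity}. Two of the four pieces—the even-even piece and one of the odd pieces (the one matching the parity of $b$)—immediately yield the closed forms $\Theta(D\tau_{\mathcal{A}})$ and $\Theta_{o}(D\tau_{\mathcal{A}}/4)$, respectively. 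The remaining two pieces are expressible as sums of the shape $\pm\, e^{-\pi i b/2}\!\sum\exp(\,\cdots\,)$ with opposite signs.

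The main technical step, and where I expect the real work to be, is verifying that these last two ``residual'' pieces cancel. For odd $b$ they correspond to the (even $m$, odd $n$) and (odd $m$, odd $n$) classes, while for even $b$ they correspond to the (even $m$, odd $n$) and (odd $m$, even $n$) classes. The key observation is that Lemma~\ref{lemparity} matches the values of the quadratic form $|na-m(b+i)|^{2}/a$ on these parity classes with the values of $n^{2}+m^{2}$ on two other parity classes which are interchanged by the symmetry $n\leftrightarrow m$. Since this symmetry preserves $n^{2}+m^{2}$ (and the phase factor $e^{2\pi i D(b+i)/(4a)\cdot(n^{2}+m^{2})}$ with it), the two residual sums are equal, and their difference vanishes.

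After this cancellation, the right-hand side reduces to
$$
\Theta(D\tau_{\mathcal{A}})+\Theta_{o}(D\tau_{\mathcal{A}}/4),
$$
which is exactly $\Theta(D\tau_{\mathcal{A}}/2)$ by Lemma~\ref{lemtheta1}, proving the general claim. For the special case $D=1$, only the $r=0$ summand is present and the identity collapses to $\sqrt{2/a_{1}}\,\theta_{[0,1/2]}(\tau_{\mathcal{A}^{2}\mathcal{A}_{1}})\,\overline{\theta_{[0,1/2]}(\tau_{\mathcal{A}_{1}})}=\Theta(\tau_{\mathcal{A}}/2)$, as stated. The hypothesis that $D$ has no prime factors $p\equiv 3\pmod{4}$ is used only to guarantee the existence of the integer $b$ via Lemma~\ref{lemaab}, so that Corollary~\ref{cordd} applies.
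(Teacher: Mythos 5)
Your proposal is correct and follows essentially the same route as the paper: apply Corollary~\ref{cordd}, split the resulting double sum by the parities of $m$ and $n$, use Lemma~\ref{parity} (with the cancellation of the two residual parity classes, which you correctly justify via the $m\leftrightarrow n$ symmetry implicit in Lemma~\ref{lemparity}), and conclude with Lemma~\ref{lemtheta1}. The paper's own proof is just a terse version of this same argument, so no further comparison is needed.
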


\begin{proof}
    Upon Corollary~\ref{cordd}, separate the double sum on the right hand side wherein according to the parities of $m,n$. Making use of Lemmas~\ref{lemtheta1} and~\ref{parity}, one obtains the desired formula.
    % and noting by parts (2) and (4) that the components over $(m,n)=(\mbox{even, odd})$ and $(\mbox{odd, odd})$ cancel with each other
\end{proof}

% \begin{lemma}
%     Let $\mathcal{A}=[a,b+i]$ with $b$ being odd. Then
%     $$
%     \Theta(\tau_{\mathcal{A}})+\Theta_{\chi_{-4}}(\tau_{\mathcal{A}}/2)=\sqrt{2}(\Theta(\tau_{\mathcal{A}})-\Theta_{\chi_{-4}}(\tau_{\mathcal{A}}/2)).
%     $$
% \end{lemma}

\begin{proposition}\label{corthetaf}
      Write 
$$
n'=\begin{cases}
    n,&\mbox{if $n$ is odd};\\
    m,&\mbox{if $n=2m$}.
\end{cases}
$$   
and denote by ${\rm rad}(n')$ the radical of~$n$, and let 
    $$
    \mathcal{A}=[a,b+i]\quad\mbox{and}\quad \mathcal{A}_{1}=[a_{1},b+i]
    $$
    be two primitive integral ideals with $b$ being  such that $b^{2}\equiv-1\pmod{n'a^{2}a_{1}}$. Then
    \begin{align*}
       % & \sum_{p_{1}\cdots p_{\ell}|{\rm rad}(n')}\frac{(-1)^{\ell}}{p_{1}\cdots p_{\ell}}\frac{\Theta\left(\frac{n'}{p_{1}\cdots p_{n}}\tau_{\mathcal{A}}/2\right)}{\Theta(\tau_{\mathcal{A}}/2)}\\
       % &= \frac{1}{n}\sum_{p_{1}\cdots p_{\ell}|{\rm rad}(n')}(-1)^{\ell}\sum_{r=0}^{\frac{n}{p_{1}\cdots p_{n}}-1}f_{ar,\frac{n}{p_{1}\cdots p_{\ell}}}(\tau_{\mathcal{A}^{2}\mathcal{A}_{1}})\overline{f_{r,\frac{n}{p_{1}\cdots p_{\ell}}}(\tau_{\mathcal{A}_{1}})}\\
       % &=\frac{1}{n}\sum_{p_{1}\cdots p_{\ell}|{\rm rad}(n')}(-1)^{\ell}\sum_{r=0}^{\frac{n}{p_{1}\cdots p_{n}}-1}f_{a(p_{1}\cdots p_{\ell}r),n}(\tau_{\mathcal{A}^{2}\mathcal{A}_{1}})\overline{f_{(p_{1}\cdots p_{\ell}r),n}(\tau_{\mathcal{A}_{1}})}\\
       \Theta_{n}(\tau_{\mathcal{A}})&=\frac{1}{n'}\sum_{p_{1}\cdots p_{\ell}|{\rm rad}(n')}(-1)^{\ell}+\frac{1}{n'}\sum_{r\in(\mathbb{Z}/n'\mathbb{Z})^{\times}}f_{ar,n'}(\tau_{\mathcal{A}^{2}\mathcal{A}_{1}})\overline{f_{r,n'}(\tau_{\mathcal{A}_{1}})},
    \end{align*}
    where $\Theta_{n}(\tau)$ is defined as in Theorem~\ref{main2}, and
    $$
    f_{r,d}(\tau)=\frac{\theta_{[r/d,1/2]}(\tau)}{\theta_{[0,1/2]}(\tau)}.
    $$
\end{proposition}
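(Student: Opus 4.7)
The plan is to substitute the identity from Corollary~\ref{cortheta} into the definition of $\Theta_{n}(\tau)$ in Theorem~\ref{main2} and then perform a M\"obius-type reorganization. Since $n$ (and hence $n'$) is square-free, we have ${\rm rad}(n')=n'$, so the ``divisors'' $d=p_{1}\cdots p_{\ell}$ appearing in the definition of $\Theta_{n}$ are precisely the square-free divisors of $n'$. Moreover, by assumption $n'$ has no prime factor congruent to $3\pmod{4}$, so every such $d$ is a product of primes $\equiv 1\pmod{4}$; hence $d\equiv 1\pmod{4}$ and the sign $(-1)^{\ell+(r-1)/2}$ appearing in $\Theta_{n}$ collapses to $(-1)^{\omega(d)}$, where $\omega(d)=\ell$.

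For each $d\mid n'$ set $D=n'/d$. Since $Da^{2}a_{1}\mid n'a^{2}a_{1}$, the hypothesis $b^{2}\equiv -1\pmod{n'a^{2}a_{1}}$ implies $b^{2}\equiv -1\pmod{Da^{2}a_{1}}$, and Corollary~\ref{cortheta} applies. Dividing the identity there by its $D=1$ specialization yields
\begin{equation*}
\frac{\Theta(D\tau_{\mathcal{A}}/2)}{\Theta(\tau_{\mathcal{A}}/2)}=\frac{1}{D}\sum_{r=0}^{D-1}f_{ar,D}(\tau_{\mathcal{A}^{2}\mathcal{A}_{1}})\,\overline{f_{r,D}(\tau_{\mathcal{A}_{1}})}.
\end{equation*}
Substituting $D=n'/d$ into the definition of $\Theta_{n}(\tau_{\mathcal{A}})$ and using the sign simplification above, we obtain
\begin{equation*}
\Theta_{n}(\tau_{\mathcal{A}})=\frac{1}{n'}\sum_{d\mid n'}(-1)^{\omega(d)}\sum_{r=0}^{n'/d-1}f_{ar,n'/d}(\tau_{\mathcal{A}^{2}\mathcal{A}_{1}})\,\overline{f_{r,n'/d}(\tau_{\mathcal{A}_{1}})}.
\end{equation*}

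The last step is a M\"obius-type swap. For each pair $(d,r)$ we set $g=\gcd(r,n'/d)$, $D'=(n'/d)/g$, $s=r/g$, so $\gcd(s,D')=1$; since $f_{t,D}(\tau)$ depends only on the class of $t/D$ in $\mathbb{Q}/\mathbb{Z}$ and $\gcd(a,n')=1$ gives the same reduction for $ar$, we have $f_{r,n'/d}=f_{s,D'}$ and $f_{ar,n'/d}=f_{as,D'}$. Reparametrizing by $D'$ and exchanging orders of summation with $d$ yields
\begin{equation*}
\Theta_{n}(\tau_{\mathcal{A}})=\frac{1}{n'}\sum_{D'\mid n'}S(D')\sum_{d\mid n'/D'}(-1)^{\omega(d)},\qquad S(D'):=\sum_{s\in(\mathbb{Z}/D'\mathbb{Z})^{\times}}f_{as,D'}(\tau_{\mathcal{A}^{2}\mathcal{A}_{1}})\,\overline{f_{s,D'}(\tau_{\mathcal{A}_{1}})}.
\end{equation*}
Because $n'$ is square-free, $\sum_{d\mid n'/D'}(-1)^{\omega(d)}=\prod_{p\mid n'/D'}(1-1)$ vanishes unless $D'=n'$, isolating the single term $\frac{1}{n'}S(n')$; the extra first sum $\frac{1}{n'}\sum_{d\mid{\rm rad}(n')}(-1)^{\omega(d)}$ in the stated formula vanishes by the same identity when $n'>1$ and supplies the cosmetic term needed to accommodate $n'=1$. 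The main subtlety I expect is this reindexing step: one must verify that $(d,r)\mapsto(d,D',s)$ is a bijection onto the triples with $d\mid n'$, $D'\mid n'/d$, $s\in(\mathbb{Z}/D'\mathbb{Z})^{\times}$, and that multiplication by $a$ (coprime to $n'$) commutes with the reduction so $f_{ar,n'/d}=f_{as,D'}$; the remaining ingredients---invoking Corollary~\ref{cortheta}, collapsing the sign, and evaluating the inner M\"obius sum---are routine.
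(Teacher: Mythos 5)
Your proof is correct and follows essentially the same route as the paper: substitute the identity of Corollary~\ref{cortheta} into the definition of $\Theta_{n}$ and carry out an inclusion--exclusion over the square-free divisors of $n'$. The only difference is cosmetic: the paper lifts each $f_{r,n'/d}$ to $f_{dr,n'}$ and sieves for numerators coprime to $n'$, whereas you reduce each fraction $r/(n'/d)$ to lowest terms and group by the reduced denominator; the two reindexings are dual and produce the same M\"obius sum.
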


\begin{proof}
By Corollary~\ref{cortheta},  one has for a positive integer $D|n'$ that
$$
\frac{\Theta(D\tau_{\mathcal{A}}/2)}{\Theta(\tau_{\mathcal{A}}/2)}=\frac{1}{D}\sum_{r=0}^{D-1}f_{ar,D}(\tau_{\mathcal{A}^{2}\mathcal{A}_{1}})f_{r,D}(\tau_{\mathcal{A}_{1}}).
$$
So it is routine to deduce that
     \begin{align*}
       \Theta_{n}(\tau_{\mathcal{A}})&= \sum_{p_{1}\cdots p_{\ell}|{\rm rad}(n')}\frac{(-1)^{\ell}}{p_{1}\cdots p_{\ell}}\frac{\Theta\left(\frac{n'}{p_{1}\cdots p_{\ell}}\tau_{\mathcal{A}}/2\right)}{\Theta(\tau_{\mathcal{A}}/2)}\\
       &= \frac{1}{n'}\sum_{p_{1}\cdots p_{\ell}|{\rm rad}(n')}(-1)^{\ell}\sum_{r=0}^{\frac{n'}{p_{1}\cdots p_{\ell}}-1}f_{ar,\frac{n'}{p_{1}\cdots p_{\ell}}}(\tau_{\mathcal{A}^{2}\mathcal{A}_{1}})\overline{f_{r,\frac{n'}{p_{1}\cdots p_{\ell}}}(\tau_{\mathcal{A}_{1}})}\\
       &=\frac{1}{n'}\sum_{p_{1}\cdots p_{\ell}|{\rm rad}(n')}(-1)^{\ell}\sum_{r=0}^{\frac{n'}{p_{1}\cdots p_{\ell}}-1}f_{a(p_{1}\cdots p_{\ell}r),n'}(\tau_{\mathcal{A}^{2}\mathcal{A}_{1}})\overline{f_{(p_{1}\cdots p_{\ell}r),n'}(\tau_{\mathcal{A}_{1}})}\\
       &=\frac{1}{n'}\sum_{p_{1}\cdots p_{\ell}|{\rm rad}(n')}(-1)^{\ell}+\frac{1}{n'}\sum_{r\in(\mathbb{Z}/n'\mathbb{Z})^{\times}}f_{ar,n'}(\tau_{\mathcal{A}^{2}\mathcal{A}_{1}})\overline{f_{r,n'}(\tau_{\mathcal{A}_{1}})}
    \end{align*}
    as desired.
\end{proof}

% \begin{lemma}\label{lemthetatheta}
%     \begin{align*}
%        &  {\rm Tr}_{H_{\mathfrak{m}}/K}\left(\sqrt{n}\sum_{p_{1}\cdots p_{\ell}|{\rm rad}(n')}\frac{(-1)^{\ell}}{p_{1}\cdots p_{\ell}}\frac{\Theta\left(\frac{n}{p_{1}\cdots p_{n}}\tau_{\mathcal{A}}\right)+\Theta_{\chi_{-4}}\left(\frac{n}{p_{1}\cdots p_{n}}\tau_{\mathcal{A}}/2\right)}{\Theta(\tau_{\mathcal{A}})+\Theta_{\chi_{-4}}(\tau_{\mathcal{A}}/2)}\right)\\
%         &={\rm Tr}_{H_{\mathfrak{m}}/K}\left(\sqrt{n}\sum_{p_{1}\cdots p_{\ell}|{\rm rad}(n')}\frac{(-1)^{\ell}}{p_{1}\cdots p_{\ell}}\frac{\Theta\left(\frac{n}{p_{1}\cdots p_{n}}\tau_{\mathcal{A}}\right)-\Theta_{\chi_{-4}}\left(\frac{n}{p_{1}\cdots p_{n}}\tau_{\mathcal{A}}/2\right)}{\Theta(\tau_{\mathcal{A}})-\Theta_{\chi_{-4}}(\tau_{\mathcal{A}}/2)}\right).
%     \end{align*}

% \end{lemma}

% \begin{proof}
%     On the left hand side, one can replace the basis $[a,b+i]$ of $\mathcal{A}$ with $[a,a+b+i]$, so that $\Theta_{\chi_{-4}}(\tau_{\mathcal{A}}/2)$ is replaced by $\Theta_{\chi_{-4}}(\tau_{\mathcal{A}}/2+\frac{1}{2})=-\Theta_{\chi_{-4}}(\tau_{\mathcal{A}}/2)$.
% \end{proof}

\section{$L(E_{n},1)$ as a square}\label{square}

This section is devoted to expressing $L(E_{n},1)$ as a square that justifies the main result of the present work. To this end, besides the preceding preliminaries, an additional number of technicalities are needed. As  in Section~\ref{factorlem}, we assume~$n=2^{e}m$ with $e=0$ or~$1$, and $m$ being odd with no prime factors~$p\equiv3\pmod{4}$.

% \begin{lemma}\label{evalati}
%     The following evaluations hold:
%     $$
%     \frac{\Theta_{\chi_{-4}}((1+i)/2)}{\Theta(1+i)}=-3+2\sqrt{2}=-\frac{\Theta_{\chi_{-4}}(i/2)}{\Theta(1+i)}.
%     $$
% \end{lemma}

% \begin{proof}
%     This result is classical. See, e.g., \cite{L}.
% \end{proof}

\begin{lemma}\label{lemna}
Given a positive odd integer~$n$.    For a class $[\mathcal{A}]\in J_{\mathfrak{m}}^{*}/J_{\mathfrak{m},\mathbb{Z}}^{*}$,  there is a representative $\mathcal{A}$ such that $\mathcal{A}=\alpha_{\mathcal{A}}\mathcal{O}_{K}=[a,b+i]$, and $n_{a}\equiv1\pmod{n}$, where $n_{a}$ is defined by
$$
n_{a}a+m_{a}(b+i)=\alpha_{\mathcal{A}}.
$$

\end{lemma}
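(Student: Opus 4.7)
My plan is constructive: starting from any primitive representative of $[\mathcal{A}]$, I will first normalize it, then modify the ideal within its class by multiplication by an auxiliary Gaussian integer $\gamma$, and finally adjust the basis element $b$, so as to drive the coefficient $n_a$ to $1$ modulo $n$.

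By Lemma~\ref{lemalpha}(1), I would first fix a primitive representative $\mathcal{A}_0=[a_0,b_0+i]$ of $[\mathcal{A}]$ with generator $\alpha_{\mathcal{A}_0}\equiv 1\pmod{4\mathcal{O}_K}$ and $b_0$ odd (the parity is attainable since $a_0$ is odd, so $b_0$ and $b_0+a_0$ have opposite parities). Writing $\alpha_{\mathcal{A}_0}=A+Bi$, one has $m_{a_0}=B$ and $n_{a_0}=(A-Bb_0)/a_0$. Since $\alpha_{\mathcal{A}_0}$ is coprime to $n$, for every prime $p\mid n$ one has $(A,B)\not\equiv(0,0)\pmod p$; in particular, if $p\mid B$ then $p\nmid A$, and so $A/a_0$ is a well-defined unit modulo $p$.

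Next I would use the Chinese Remainder Theorem to choose $\ell\in(\mathbb{Z}/n\mathbb{Z})^{\times}$ with
$$
\ell\equiv A/a_0\pmod p\quad\text{for every prime }p\mid n\text{ with }p\mid B,
$$
and then pick a Gaussian integer $\gamma\in\mathcal{O}_K$ satisfying
$$
\gamma\equiv 1\pmod{4\mathcal{O}_K},\qquad \gamma\equiv \ell\pmod{n\mathcal{O}_K},\qquad \gcd(N(\gamma),a_0)=1.
$$
Since $\gcd(4,n)=1$, the first two congruences are jointly solvable, and a generic $\gamma$ in the resulting coset modulo $4n\mathcal{O}_K$ satisfies the coprimality condition (which excludes only finitely many residues modulo the primes dividing $a_0$). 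Setting $\mathcal{A}:=(\gamma)\mathcal{A}_0=[a,b+i]$ with $a=N(\gamma)a_0$, the ideal $\mathcal{A}$ is primitive, still lies in $[\mathcal{A}_0]$ (since $(\gamma)\in J_{\mathfrak{m},\mathbb{Z}}^{*}$), and its generator $\alpha_{\mathcal{A}}=\gamma\alpha_{\mathcal{A}_0}\equiv 1\pmod{4\mathcal{O}_K}$. Reducing modulo $n$ via $\gamma\equiv\ell$ one has
$$
\operatorname{Re}(\alpha_{\mathcal{A}})\equiv\ell A,\qquad \operatorname{Im}(\alpha_{\mathcal{A}})\equiv\ell B,\qquad a\equiv\ell^2 a_0\pmod n,
$$
so the new coefficient is
$$
n_a\equiv\frac{\ell A-b\cdot\ell B}{\ell^{2} a_0}\equiv\frac{A-bB}{\ell a_0}\pmod n,
$$
and $n_a\equiv 1\pmod n$ reduces to $A-bB\equiv\ell a_0\pmod n$.

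For each prime $p\mid n$ with $p\mid B$, this congruence is automatic by the choice of $\ell$. For the remaining primes $p\mid n$, it becomes a linear congruence in $b$ that can be solved by shifting $b$ by a suitable multiple of $2a$ (such a shift preserves the ideal $\mathcal{A}$ and the parity of $b$, since $a$ is odd); since $2a$ is a unit modulo each such $p$ (as $n$ is odd and $\gcd(a,n)=1$), the congruences at distinct primes combine via CRT into a single shift. The main obstacle will be the prime-by-prime analysis and, specifically, the handling of the "rigid" primes $p\mid n$ for which $p\mid B$: shifting $b$ alone is powerless to adjust $n_a$ modulo such a $p$, which is exactly why the auxiliary element $\gamma$ is needed — its residue $\ell$ modulo $p$ is forced to equal $A/a_0$, and this forcing propagates consistently to all primes dividing $n$ via CRT. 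Once $\gamma$ and $b$ have been chosen as above, the resulting representative $\mathcal{A}$ has the desired property.
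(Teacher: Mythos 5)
Your proposal is correct and follows essentially the same strategy as the paper: multiply the given representative by an auxiliary principal ideal $(\gamma)\in J_{\mathfrak{m},\mathbb{Z}}^{*}$ with $\gamma\equiv\ell\pmod{n}$ and exploit the resulting relation $n_a\equiv(A-bB)\,(\ell a_0)^{-1}\pmod{n}$ together with the freedom to shift $b$ modulo $a$, which is exactly the paper's computation $N\equiv n_1\ell^{-1}\pmod n$ in slightly different bookkeeping. Your prime-by-prime treatment of the primes $p\mid B$ versus $p\nmid B$ replaces the paper's upfront normalization $\gcd(n_1,n)=1$, and like the paper you leave the primitivity of the product ideal to a routine genericity choice of $\gamma$.
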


\begin{proof}
    Suppose that $\tilde{\mathcal{A}}={\alpha}_{\tilde{\mathcal{A}}}\mathcal{O}_{K}$ is of norm~$\tilde{a}$ and is primitive. Pick a primitive integral ideal $\lambda\mathcal{O}_{K}\in J_{\mathfrak{m},\mathbb{Z}}^{*}$ such that $[A,B+i]=\lambda\equiv \ell\pmod{n}$ with $\gcd(\ell,n)=1$ and $\ell\equiv 1\pmod{4}$. Upon this, one can find by the Chinese Remainder Theorem an odd number $b$ such that
 $$
    \alpha_{\tilde{\mathcal{A}}}\mathcal{O}_{K}=[\tilde{a},b+i],\quad\mbox{and}\quad \lambda\mathcal{O}_{K}=[A,b+i],
    $$
    and 
     $$
     \alpha_{\tilde{\mathcal{A}}}=n_{1}\tilde{a}+m_{1}(b+i)
     $$
     with $\gcd(n_{1},n)=1$.
     So, one has
     $$
     \lambda=n_{2}A+m_{2}(b+i)
     $$
      for some $n_{2},m_{2}$ with $n|m_{2}$, and
    $$
    \lambda\alpha_{\tilde{\mathcal{A}}}\mathcal{O}_{K}=[A\tilde{a},b+i],
    $$
    which is still primitive. Thus, 
    $$
    \lambda\alpha_{\tilde{\mathcal{A}}}=n_{1}n_{2}A\tilde{a}+n_{2}Am_{1}(b+i)\pmod{n}.
    $$
    On the other hand, one can write
    $$
    \lambda\alpha_{\tilde{\mathcal{A}}}=NA\tilde{a}+M(b+i)
    $$
    for some $N,M$.
Comparing this with its last line,    it is not hard to see that
    $$
    NA\tilde{a}\equiv n_{1}n_{2}A\tilde{a}\pmod{n},
    $$
    that is,
    $$
    N\equiv n_{1}n_{2}\pmod{n}.
    $$
    Note that $A\equiv \ell^{2}\pmod{n}$, and that $\lambda\equiv\ell\pmod{n}$ implies that $n_{2}A\equiv\ell\pmod{n}$. Thus, $n_{2}\equiv\ell^{-1}\pmod{n}$, which together with the line above implies that
    $$
    N\equiv n_{1}\ell^{-1}\pmod{n}.
    $$
  Finally, take $\ell\equiv n_{1}\pmod{n}$, so that $\mathcal{A}=\lambda\alpha_{\tilde{\mathcal{A}}}\mathcal{O}_{K}$ is the desired representative.
\end{proof}

%%%%%%%%%%%%%%%%%%%%%%%%%%%%%%%%%%%%%%%%%%%%%%%%%%%%%%%%%%%%%%%%%%%%%%%%%%%%%%%%%

\begin{lemma}\label{shimuraff}
% For $[\mathcal{A}]\in J_{4n}^{*}/J_{\mathfrak{m},\mathbb{Z}}^{*}$ with $\mathcal{A}=\alpha_{\mathcal{A}}=[a,b+i]\equiv1\pmod{4}$ primitive, where $b$ is even and such that $b^{2}=-1\pmod{n^{2}a}$, one has
For a positive odd integer~$D|n$, let $\alpha\mathcal{O}_{K}=[a,b+i]=[a,a\tau_{\mathcal{A}}]$ be a primitive integral ideal with $\alpha\equiv1\pmod{4}$,  $a\equiv1\pmod{4}$, $b$  such that $b^{2}\equiv-1\pmod{D^{2}a}$, and $n_{a}\equiv1\pmod{4D}$ given by Lemma~\ref{lemna}. Then one has
$$
f_{r,D}(\tau_{\mathcal{A}})=f_{r,D}(b+i)
$$
for any~$r$.
% $$
% f_{r}(\tau_{\mathcal{A}})=e^{2\pi i\left(\frac{(t_a-1)n_{a}2r}{4p}\right)}f_{n_{a}r}(b+i)=e^{2\pi i\left(\frac{(1-n_{a})2r}{4p}\right)}f_{n_{a}r}(b+i)=f_{r}(b+i)^{\sigma_{\mathcal{A}}},
% $$
% where $t_{a}$ and $n_{a}$ are defined by
% $$
% t_{a}a+s(b+i)=\alpha_{\mathcal{A}},\quad n_{a}=at_a+2bs\pmod{p}\,\,\mbox{chosen to be odd}
% $$
% which are induced by
% $$
% \alpha_{\mathcal{A}}\begin{pmatrix}
%     b+i\\1
% \end{pmatrix}=\begin{pmatrix}
%     t_{a}a+2bs&-s({b^{2}+1})\\
%     s&t_{a}a
% \end{pmatrix}\begin{pmatrix}
%     b+i\\1
% \end{pmatrix}.
% $$
% So $(t_{a}a+2bs)t_{a}=n_{a}t_{a}\equiv 1\pmod{p^{2}}$.
% One therefore has 
% $$
% e^{-2\pi i\frac{2r}{4p}}f_{r}(\tau_{\mathcal{A}})=e^{-2\pi i\frac{2n_{a}r}{4p}}f_{n_{a}r}(b+i).
% $$
\end{lemma}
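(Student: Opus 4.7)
The strategy is to produce $\gamma\in\mathrm{SL}_{2}(\mathbb{Z})$ in a level-$4D$ congruence subgroup with $\gamma(\tau_{\mathcal{A}})=b+i$ and then conclude via the modular invariance of $f_{r,D}$. From $\alpha\bar\alpha=a$ and $\alpha=n_{a}a+m_{a}(b+i)$, one obtains the identity $n_{a}^{2}a+2n_{a}m_{a}b+m_{a}^{2}c=1$ with $c=(b^{2}+1)/a\in\mathbb{Z}$, so $\gcd(m_{a},n_{a})=1$. Pick $A,B\in\mathbb{Z}$ with $An_{a}-Bm_{a}=1$ and set $\gamma_{0}=\left(\begin{smallmatrix}A&B\\ m_{a}&n_{a}\end{smallmatrix}\right)\in\mathrm{SL}_{2}(\mathbb{Z})$. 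Using $m_{a}\tau_{\mathcal{A}}+n_{a}=\alpha/a$ and $|\alpha|^{2}=a$, a direct computation gives $\mathrm{Im}(\gamma_{0}\tau_{\mathcal{A}})=1$ and $\mathrm{Re}(\gamma_{0}\tau_{\mathcal{A}})=b+k$ for an explicit integer~$k$; then $\gamma:=T^{-k}\gamma_{0}$ has the same bottom row $(m_{a},n_{a})$ and satisfies $\gamma(\tau_{\mathcal{A}})=b+i$.

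Next I would verify $\gamma\in\Gamma_{1}(4D)$. The lower-right entry is $n_{a}\equiv 1\pmod{4D}$ by hypothesis. For the lower-left entry, comparing imaginary parts in $\alpha=(n_{a}a+m_{a}b)+im_{a}\equiv 1\pmod{4\mathcal{O}_{K}}$ yields $m_{a}\equiv 0\pmod{4}$; while membership $\alpha\mathcal{O}_{K}\in J_{\mathfrak{m},\mathbb{Z}}^{*}$ forces $\alpha\equiv\ell\pmod{n\mathcal{O}_{K}}$ for some rational integer~$\ell$, hence $m_{a}\equiv 0\pmod{n}$ and in particular $m_{a}\equiv 0\pmod{D}$. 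Since $D$ is odd, these congruences combine into $m_{a}\equiv 0\pmod{4D}$.

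Finally I would apply the classical theta transformation formula: for $\gamma$ with bottom row $\equiv(0,1)\pmod{4D}$, the characteristic $[r/D,1/2]$ returns to itself modulo $\mathbb{Z}^{2}$ under the induced action, and the eighth-root-of-unity multiplier depends only on $\gamma\bmod 4D$, hence is independent of $r$. Consequently the automorphy factor of $\theta_{[r/D,1/2]}$ coincides with that of $\theta_{[0,1/2]}$, so their ratio $f_{r,D}$ is genuinely $\gamma$-invariant:
\[
f_{r,D}(\tau_{\mathcal{A}})=f_{r,D}(\gamma\tau_{\mathcal{A}})=f_{r,D}(b+i).
\]
The main obstacle is the bookkeeping in this last step, namely verifying that the congruences $m_{a}\equiv 0\pmod{4D}$ and $n_{a}\equiv 1\pmod{4D}$ are strong enough to trivialize both the shift in characteristic and the eighth-root-of-unity multiplier appearing in the transformation of $\theta_{[r/D,1/2]}$. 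A cleaner alternative, which I would fall back on if the direct calculation gets unwieldy, is to invoke Shimura reciprocity for $f_{r,D}\in\mathcal{F}_{4D^{2}}$ directly: the image of $\gamma$ in $\mathrm{GL}_{2}(\mathbb{Z}/4D^{2}\mathbb{Z})$ acts trivially on $f_{r,D}$ precisely because of those same congruences, which yields the equality of CM values without having to track the multiplier explicitly.
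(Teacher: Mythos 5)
There is a genuine gap. Your argument hinges on the congruence $m_{a}\equiv 0\pmod{D}$, which you derive from the assertion that $\alpha\mathcal{O}_{K}\in J_{\mathfrak{m},\mathbb{Z}}^{*}$, i.e.\ that $\alpha\equiv\ell\pmod{n\mathcal{O}_{K}}$ for a rational integer $\ell$. But that membership is exactly the statement that $[\mathcal{A}]$ is the \emph{trivial} class of $J_{\mathfrak{m}}^{*}/J_{\mathfrak{m},\mathbb{Z}}^{*}$; the lemma is applied in the proof of Theorem~\ref{main4} to \emph{all} classes, where $a=N(\mathcal{A})$ runs over all of $(\mathbb{Z}/n\mathbb{Z})^{\times}$. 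In fact the congruence fails precisely when $a\not\equiv1\pmod{D}$: from $N(\alpha)=a$ and $\alpha=n_{a}a+m_{a}(b+i)$ one gets your identity $n_{a}^{2}a+2n_{a}m_{a}b+m_{a}^{2}c=1$ with $c=(b^{2}+1)/a\equiv0\pmod{D^{2}}$, and reducing mod $D$ using $n_{a}\equiv1\pmod{4D}$ gives $a+2m_{a}b\equiv1\pmod{D}$, i.e.\ $m_{a}\equiv(1-a)(2b)^{-1}\pmod{D}$. So your $\gamma$ does \emph{not} lie in $\Gamma_{1}(4D)$ for nontrivial classes (only $m_{a}\equiv0\pmod 4$ and $n_{a}\equiv1\pmod{4D}$ survive), and the final invariance step collapses as written: with a lower-left entry that is a unit times $(1-a)$ mod $D$, the induced action on the characteristic $[r/D,1/2]$ and on the eighth root of unity must be tracked explicitly rather than declared trivial.

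Two further remarks. First, even granting $\gamma\in\Gamma_{1}(4D)$, that group is too large: $f_{r,D}$ has level of size $4D^{2}$ (as you note yourself at the end), e.g.\ it is not invariant under $\tau\mapsto\tau+4D$, and the hypotheses only control $n_{a}$ modulo $4D$, not $4D^{2}$; so the hypothesis $b^{2}\equiv-1\pmod{D^{2}a}$ must actually enter the multiplier computation and cannot be bypassed by a level-$4D$ congruence argument. Second, your construction of $\gamma$ itself (bottom row $(m_{a},n_{a})$, imaginary part computation, translation by $T^{-k}$) is correct and is the natural way to begin filling in the details that the paper omits — the paper's own ``proof'' is a one-line appeal to Shimura reciprocity and the transformation formula of $\theta_{[\mu,\nu]}$ with all details suppressed — but the decisive bookkeeping you defer (which transformation convention applies, how $[r/D,1/2]$ moves under a matrix whose lower-left entry is nonzero mod $D$, and why the resulting root of unity is independent of $r$) is precisely where the content of the lemma lies, and the one congruence you do assert there is false.
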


\begin{proof}
    This is basically a consequence of the Shimura reciprocity law (see, e.g., \cite[Appendix]{R}) and the transformation formula for $\theta_{[\mu,\nu]}(\tau)$ (see, e.g. \cite{FK}). We omit the details.
\end{proof}

% \begin{corollary}
%     $$
%     f_{ar}(\tau_{\mathcal{A}^{2}\mathcal{A}_{1}})\overline{f_{r}(\tau_{\mathcal{A}_{1}})}=e^{2\pi i\left(\frac{(t_{a}^{2}t_{a_{1}}-1)n_{a}^{2}n_{a_{1}}2ar}{4p}-\frac{(t_{a_{1}}-1)n_{a_{1}}2r}{4p}\right)}f_{n_{a}^{2}n_{a_{1}}ar}(b+i)\overline{f_{r}(b+i)}.
%     $$
%     In particular, if $\mathcal{A}_{1}=\mathcal{O}_{K}$, then
%     $$
%     f_{ar}(\tau_{\mathcal{A}^{2}})\overline{f_{r}(\tau_{\mathcal{O}_{K}})}=e^{2\pi i\left(\frac{(t_{a}^{2}-1)n_{a}^{2}2ar}{4p}\right)}f_{n_{a}^{2}ar}(b+i)\overline{f_{r}(b+i)}=e^{2\pi i\left(\frac{(1-n_{a}^{2})2ar}{4p}\right)}f_{n_{a}^{2}ar}(b+i)\overline{f_{r}(b+i)}.
%     $$
%     % In particular, if $\mathcal{A}_{1}=\mathcal{A}^{-1}$, then
%     % $$
%     % f_{ar}(\tau_{\mathcal{A}})\overline{f_{r}(\tau_{\mathcal{A}^{-1}})}=f_{n_{a}ar}(b+i)\overline{f_{r}(b+i)}.
%     % $$
% \end{corollary}

Building upon the preceding preparations,  we are ready to  express $L(E_{n},1)$ as a square, which is the main result of the present work.

\begin{theorem}\label{main4}
\begin{enumerate}
    \item For~$n$ being odd, and an even number~$b$ such that $b^{2}=-1\pmod{n^{2}}$, one has
    \[
L(E_n,1)=\frac{\pi \left|\theta_{\chi_{n}}\left(\frac{b+i}{2
n^2}\right)\right|^2}{4\sqrt{2}n }.
\]

\item For~$n=2m$, and an odd number~$b$ such that $b^{2}=-1\pmod{m^{2}}$, one has
    \[
L(E_n,1)=\frac{\pi \left|\theta_{\chi_{m}}\left(\frac{b+i}{2
m^2}\right)\right|^2}{\sqrt{2}n }.
\]
\end{enumerate}

\end{theorem}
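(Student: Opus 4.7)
The plan is to combine the Galois-trace formula of Theorem~\ref{main2}, the factorization of Proposition~\ref{corthetaf}, and the Shimura reciprocity content of Lemma~\ref{shimuraff}, and then repackage the result as the modulus square of a Kronecker-weighted theta value. I will sketch the odd case; the even case follows the same pattern with the additional parity bookkeeping from Lemmas~\ref{lemparity} and~\ref{parity}.

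First, expand the Galois trace in Theorem~\ref{main2} over the ray class group $J_\mathfrak{m}^*/J_{\mathfrak{m},\mathbb{Z}}^*$. Using Lemma~\ref{lemchip}, one has $(\sqrt n)^{\sigma_\mathcal{A}^{-1}} = \chi'_n(\mathcal{A})\sqrt n$, so
\[
L(E_n,1) = \frac{\pi}{4}\,\Theta\!\left(\frac{1+i}{2}\right)\sum_{[\mathcal{A}]}\chi'_n(\mathcal{A})\,\Theta_n(\tau_\mathcal{A}).
\]
Choose, via Lemmas~\ref{lemaab} and~\ref{lemna}, a common $b$ and an auxiliary primitive ideal $\mathcal{A}_1 = [a_1, b+i]$ such that $b^2 \equiv -1 \pmod{na^2a_1}$ holds uniformly over all chosen class representatives $\mathcal{A} = [a, b+i]$ with $a \equiv 1 \pmod 4$ and $n_a \equiv 1 \pmod n$. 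Apply Proposition~\ref{corthetaf}: the constant term $\frac{1}{n}\sum_{p_1\cdots p_\ell\mid \mathrm{rad}(n)}(-1)^\ell$ is annihilated by orthogonality with the nontrivial character $\chi'_n$, leaving only the bilinear sum
\[
L(E_n,1) = \frac{\pi}{4n}\,\Theta\!\left(\frac{1+i}{2}\right)\sum_{[\mathcal{A}]}\chi'_n(\mathcal{A})\sum_{r\in(\mathbb{Z}/n\mathbb{Z})^\times} f_{ar,n}(\tau_{\mathcal{A}^2\mathcal{A}_1})\,\overline{f_{r,n}(\tau_{\mathcal{A}_1})}.
\]

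The central step is to invoke Shimura reciprocity (Lemma~\ref{shimuraff}) on each factor, reducing $f_{r,n}(\tau_{\mathcal{A}_1})$ and $f_{ar,n}(\tau_{\mathcal{A}^2\mathcal{A}_1})$ to values of $f_{r',n}(b+i)$ with $r'$ a multiplicative permutation of $r$ depending on $n_a \bmod n$. After the change of variables $r \mapsto a^{-1}r$ (permissible since $(a,n)=1$), the $r$-sum decouples from the class-sum. The identity I must verify is that the combination of $\chi'_n(\mathcal{A})$ with the Shimura-reciprocity permutation reassembles, on each factor, into the Kronecker character $\chi_n(r)$ applied to the characteristic index; granted this, summing over $[\mathcal{A}]$ yields a positive constant multiple of
\[
\left|\sum_{r\in(\mathbb{Z}/n\mathbb{Z})^\times}\chi_n(r)\,\theta_{[r/n,\,1/2]}(b+i)\right|^2\Big/\bigl|\theta_{[0,\,1/2]}(b+i)\bigr|^2.
\]

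To match this with $|\theta_{\chi_n}(\tau_n)|^2$, I would split $k \in \mathbb{Z}$ into residues mod $n$ in the definition of $\theta_{\chi_n}(\tau_n)$ at $\tau_n = (b+i)/(2n^2)$ to obtain
\[
\theta_{\chi_n}(\tau_n) = \sum_{r\in\mathbb{Z}/n\mathbb{Z}} \chi_n(r)\,\theta_{[r/n,\,0]}(b+i),
\]
and then convert between $\theta_{[\cdot,0]}$ and $\theta_{[\cdot,1/2]}$ via a half-period shift in $b$ whose effect is absorbed into the character $\chi_n$. The denominator $|\theta_{[0,1/2]}(b+i)|^2$ cancels against $\Theta((1+i)/2)=\theta((1+i)/2)^2$ using the classical Jacobi transformation, and careful tracking of the $\sqrt 2$ factors originating in Lemma~\ref{rosulem} and Corollary~\ref{cortheta} gives the claimed $\pi/(4\sqrt 2\,n)$. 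The main obstacle I anticipate is precisely the Shimura-reciprocity bookkeeping in the central step: verifying that on \emph{both} factors simultaneously, the characteristic permutation composes with $\chi'_n(\mathcal{A})$ to produce exactly $\chi_n(r)$ with no spurious root-of-unity twist, and that this is compatible with the fixed choice of $\mathcal{A}_1$ across all classes. For the even case $n=2m$, an additional subtlety is that the natural CM point produced by the argument is $(b+m^2+i)/(2m^2)=\tau'_m$ rather than $\tau_m$, which is why the theorem must be formulated in terms of $\tau'_{n/2}$ in that case.
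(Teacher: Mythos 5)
Your proposal follows essentially the same route as the paper's own proof: Theorem~\ref{main2} for the Galois trace, orthogonality of $\chi'_n$ to kill the constant term of Proposition~\ref{corthetaf}, Lemma~\ref{shimuraff} to move all arguments to $b+i$, the change of variables $r\mapsto a^{-1}r$ to factor the bilinear sum into $\bigl|\sum_r\chi_n(r)f_{r,n}\bigr|^2$, and the identity $\theta(i/2)^2/\theta((1+i)/2)^2=\sqrt2$ for the final constant. The one step you flag as the main obstacle --- verifying that the Shimura-reciprocity permutation composes with $\chi'_n(\mathcal{A})$ to give exactly $\chi_n(r)$ with no extra root of unity --- is precisely the point the paper itself delegates to Lemma~\ref{shimuraff} with details omitted, so your outline is faithful to the published argument.
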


\begin{proof}
%     Given Lemma~\ref{thetathetagalois}, one has
% \begin{align*}
% &\frac{1}{\sqrt{n}}{\rm Tr}_{H_{\mathfrak{m}}/K}\left(\sqrt{n}\Theta_{n}(1+i)\right)
% % &=\sum_{\substack{[\mathcal{A}]\in J_{\mathfrak{m}}^{*}/J_{\mathfrak{m},\mathbb{Z}}^{*}\\\mathcal{A}=[a,b+i]\\b\,\,odd}}\chi'_{n}({\mathcal{A}})\sum_{p_{1}\cdots p_{\ell}|{\rm rad}(n')}\frac{(-1)^{\ell}}{p_{1}\cdots p_{\ell}}\left(\frac{\Theta\left(\frac{n}{p_{1}\cdots p_{\ell}}\tau_{\mathcal{A}}\right)+\Theta_{\chi_{-4}}\left(\frac{n}{p_{1}\cdots p_{\ell}}\tau_{\mathcal{A}}/2\right)}{\Theta(\tau_{\mathcal{A}})+\Theta_{\chi_{-4}}(\tau_{\mathcal{A}}/2)}\right)\\
%     =\sum_{\substack{[\mathcal{A}]\in J_{\mathfrak{m}}^{*}/J_{\mathfrak{m},\mathbb{Z}}^{*}\\ \mathcal{A}=[a,b+i]\\b\,\,odd\\b^{2}\equiv-1\pmod{n^{2}a^{2}}\\n_{a}\equiv1\pmod{4n}}}\chi_{n}'(\mathcal{A})\Theta_{n}(\tau_{\mathcal{A}}).
% \end{align*}

We only prove part (1) as an illustration.  Note that the proof of Theorem~\ref{main2} together with Lemma~\ref{lemna} yields that
     $$
   L(E_{n},1)=\frac{\pi\Theta((1+i)/2)}{4}\sum_{\substack{[\mathcal{A}]\in J_{\mathfrak{m}}^{*}/J_{\mathfrak{m},\mathbb{Z}}^{*}\\ \mathcal{A}=[a,b+i]\\b\,\,odd\\b^{2}\equiv-1\pmod{n^{2}a^{2}}\\n_{a}\equiv1\pmod{4n}}}\chi_{n}'(\mathcal{A})\Theta_{n}(\tau_{\mathcal{A}}).
    $$
Upon this,    using Proposition~\ref{corthetaf} and the simple fact of
$$
\sum_{[\mathcal{A}]\in J_{\mathfrak{m}}^{*}/J_{\mathfrak{m},\mathbb{Z}}^{*}}\chi'_{n}(\mathcal{A})=0,
$$
as well as Lemma~\ref{shimuraff},  one can deduce from the equality above that
   \begin{align*}
        L(E_{n},1)&=\frac{\pi\Theta((1+i)/2)}{4n}\sum_{\substack{[\mathcal{A}]\in J_{\mathfrak{m}}^{*}/J_{\mathfrak{m},\mathbb{Z}}^{*}\\ \mathcal{A}=[a,b+i]\\b\,\,odd\\b^{2}\equiv-1\pmod{n^{2}a^{2}}\\n_{a}\equiv1\pmod{4n}}}\sum_{r\in(\mathbb{Z}/n\mathbb{Z})^{\times}}\chi_{n}'(\mathcal{A})f_{ar,n}(b+i)\overline{f_{r,n}(b+i)}\\
         &=\frac{\pi\Theta((1+i)/2)}{4n}\sum_{a\in(\mathbb{Z}/n\mathbb{Z})^{\times}}\sum_{r\in(\mathbb{Z}/n\mathbb{Z})^{\times}}\chi_{n}(ar)f_{ar,n}(b+i)\overline{\chi_{n}(r)f_{r,n}(b+i)}\\
         &=\frac{\pi\Theta((1+i)/2)}{4n}\left|\sum_{r=1}^{n-1}\chi_{n}(r)f_{r,n}(\tilde{b}+i)\right|^{2}
        % &=\frac{\left|\theta_{\chi_{n}}\left(\frac{b+i}{2n^{2}}\right)\right|^{2}}{n\theta_{[0,1/2]}\left(b+i\right)^{2}}\\
         %&=\frac{\left|\theta_{\chi_{n}}\left(\frac{b+i}{2n^{2}}\right)\right|^{2}}{n\theta\left(i/2\right)^{2}}
         %        \frac{1}{p}\sum_{s=1}^{p-1}\sum_{r=1}^{p-1}\chi_{n}(s)f_{sr}(b+i)\overline{f_{r}(b+i)}.
    \end{align*}
    for a $\tilde{b}$ being odd such that $\tilde{b}^{2}+1=0\pmod{n^{2}}$. Finally, by the definition of $f_{r,n}(\tau)$, it is routine to check that
    $$
    \sum_{r=1}^{n-1}\chi_{n}(r)f_{r,n}(\tilde{b}+i)=\frac{\theta_{\chi_{n}}\left(\frac{b+i}{2n^{2}}\right)}{\theta(i/2)}
    $$
    for a $b$ being even such that $b^{2}+1=0\pmod{n^{2}}$. Finally, noting that
    $$
    \Theta\left(\frac{1+i}{2}\right)=\theta\left(\frac{1+i}{2}\right)^{2}
    $$
    and making use of the classical result
    $$
    \frac{\theta(i/2)^{2}}{\theta((1+i)/2)^{2}}=\sqrt{2}
    $$
    give the desired formula.

%      For part (2),  one similarly has that
%      $$
%    L(E_{n},1)=\frac{\pi\Theta(i/2)}{4m}\frac{\left|\theta_{\chi_{n}}\left(\frac{b+i}{2m^{2}}\right)\right|^{2}}{\theta((1+i)/2)^{2}}.
%     $$
% Again, using the classical result
%     $$
%     \frac{\theta((1+i)/2)^{2}}{\theta(i/2)^{2}}=\frac{\sqrt{2}}{2}
%     $$
%     and recalling that $2m=n$
%     give the desired formula.
\end{proof}

\begin{remark}\label{rem1}
    Following the development of the formula given in part (2) of Theorem~\ref{main4}, one may also establish the alternative formulation
    $L(E_{n},1)={\left|\theta_{\chi_{2n}}\left(\tau_{n/2}/4\right)\right|}, $
    where $\tau_{n/2}=\frac{b+i}{2m^{2}}$ for some even integer~$b$ such that $b^{2}\equiv-1\pmod{m^{2}}$. This basically follows from taking $\mu=\frac{1}{2}$ instead of~$0$ in Lemma~\ref{rosulem} and accordingly modifying all the details afterwards. We leave the details to the reader.
\end{remark}

\section{Mock Heegner zeros of theta functions}\label{mock}
Let $N  $ be a positive integer, and  $f$  a nonzero meromorphic modular form of level $ N $ with algebraic Fourier coefficients. The zeros and poles of $ f $ define an algebraic divisor on $ X_0(N)$. Such divisors have been studied extensively in connection with algebraic and analytic aspects of modular forms. 

We consider the zeros and   poles of $f$ on the upper half plane. 
 It is well known that such points are either CM (complex multiplication) points  or transcendental. In the case of CM points  of discriminant $D$, if every prime divisor of $N$  splits in $\mathbb{Q}(\sqrt{D})$, then these points are usually  referred to as Heegner points. The hypothesis that every prime divisor of $N$  splits in $\mathbb{Q}(\sqrt{D})$ is usually called the Heegner hypothesis.  
The zeros of modular forms that occur at Heegner points are very important in number theory.  
  Yang \cite{Y2.5}  constructed a family of infinite many theta series over the Hilbert-Blumenthal modular surfaces with a common zero by the method in \cite{V4}. Later  Jimenez-Urroz and Yang \cite{Y1} studied the  Heegner zeros of the theta functions for the family of CM elliptic curves constructed by B. Gross.

For $E_n: y^2=x^3-n^2x$, the conductor
$N=32n^2$ if $n$ is odd and $16n^2$ if $n$ is even. The CM points we considered in this paper are all in $\mathbb{Q}(i)$. Therefore,  the Heegner hypothesis is not satisfied. Following Monsky's notation \cite{Monsky}, we will call such these CM points mock Heegner zeros if the theta function vanishes at them. 

 By  Theorem \ref{Theorem1.1}, 
$\tau_n$ is a  zero of $\theta_{\chi_n}$  if and only if $L(E_n,1)=0$. Hence Theorem \ref{Theorem1.1} can give a lot of information on  zeros of $\theta_{\chi_n}$.

\begin{theorem}\label{Theorem8.3}
Let  $m=\prod_{i=1}^s p_i$ with  each prime $p_i\equiv1\pmod{4}$, $b$  an even integer such that $b^2\equiv -1\pmod{m^2}$, $\tau_m=\frac{b+i}{2
m^2}$, $\tau'_{m}=\frac{b+m^2+i}{2
m^2}$.  
\begin{enumerate}
    \item If $m\equiv 5\pmod{8}$, then $\tau_m$ and  $-\overline{\tau}_m$ are  zeros of $\theta_{\chi_m}$. 
    \item If  $m\equiv 1\pmod{8}$, then $\tau_m$ and  $-\overline{\tau}_m$ are  zeros of $\theta_{\chi_m}$ if and only if $L(E_m,1)$=0.
    \item If $L(E_{2m},1)=0$, then $\tau'_{m}$, $-\overline{\tau'}_{m}$ are  zeros of $\theta_{\chi_m}$ and $\tau_m/4$,  $-\overline{\tau'}_{m}/4$ are  zeros of $\theta_{\chi_{4m}}$.
\end{enumerate}
\end{theorem}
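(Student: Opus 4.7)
My plan is to derive all three parts of Theorem~\ref{Theorem8.3} directly from Theorem~\ref{Theorem1.1} (together with Remark~\ref{rem1}) by combining two inputs: the value of the global root number of the congruent number elliptic curve $E_m$ when $m\equiv 5\pmod{8}$, and the reality of the Kronecker symbols $\chi_m$ and $\chi_{4m}$. The latter yields the symmetry
\[
\overline{\theta_{\chi_m}(\tau)}=\sum_{k\in\mathbb{Z}}\chi_m(k)e^{-2\pi i k^2\overline{\tau}}=\theta_{\chi_m}(-\overline{\tau}),
\]
and the analogous identity for $\chi_{4m}$. Consequently $\tau$ is a zero of the relevant theta series if and only if $-\overline{\tau}$ is, so in each case only the first of the two zeros needs to be produced.

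For part (1), when $m\equiv 5\pmod{8}$ the global root number of $E_m$ equals $-1$ (as noted in the introduction of the paper), so the functional equation forces $L(E_m,1)=0$. The $n=m$ case of Theorem~\ref{Theorem1.1} then gives $\theta_{\chi_m}(\tau_m)=0$, and the conjugation symmetry above produces the zero at $-\overline{\tau}_m$. Part (2) is an immediate consequence of the ``in particular'' clause of Theorem~\ref{Theorem1.1}: for $m\equiv 1\pmod{8}$ the root number does not force vanishing, but the equivalence $L(E_m,1)=0\iff\theta_{\chi_m}(\tau_m)=0$ holds regardless, and the zero at $-\overline{\tau}_m$ again comes from conjugation.

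For part (3), applying the $n=2m$ case of Theorem~\ref{Theorem1.1} to the hypothesis $L(E_{2m},1)=0$ immediately yields $\theta_{\chi_m}(\tau'_m)=0$, and hence also $\theta_{\chi_m}(-\overline{\tau'_m})=0$ by conjugation. The companion identity $|\theta_{\chi_m}(\tau'_m)|=|\theta_{\chi_{4m}}(\tau_m/4)|$ of Remark~\ref{rem1} then gives $\theta_{\chi_{4m}}(\tau_m/4)=0$. To obtain the zero at $-\overline{\tau'_m}/4$, I note that $\chi_{4m}(k)$ is supported on odd $k$, for which $k^2\equiv 1\pmod{8}$. A direct calculation then shows that $\theta_{\chi_{4m}}(\tau+1/8)=e^{\pi i/4}\theta_{\chi_{4m}}(\tau)$, and since $\tau'_m/4-\tau_m/4=1/8$, the zero at $\tau_m/4$ propagates to $\tau'_m/4$; conjugation finally gives the zero at $-\overline{\tau'_m}/4$.

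The arguments are essentially formal once Theorem~\ref{Theorem1.1} and Remark~\ref{rem1} are in hand. The only subtle point is tracking the precise $1/8$-translation relation between $\tau_m/4$ and $\tau'_m/4$ when transferring vanishing between $\theta_{\chi_m}$ and $\theta_{\chi_{4m}}$, together with citing the correct value of the root number of $E_m$ when $m\equiv 5\pmod{8}$.
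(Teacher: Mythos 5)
Your proposal is correct and follows the same route as the paper, which simply asserts that all three parts follow directly from Theorem~\ref{Theorem1.1}; you supply the details the paper leaves implicit (the root number argument for $m\equiv 5\pmod 8$, the conjugation symmetry $\overline{\theta_{\chi_m}(\tau)}=\theta_{\chi_m}(-\overline{\tau})$, Remark~\ref{rem1}, and the $1/8$-translation identity $\theta_{\chi_{4m}}(\tau+1/8)=e^{\pi i/4}\theta_{\chi_{4m}}(\tau)$ needed to reconcile the zeros at $\tau_m/4$ and $-\overline{\tau'_m}/4$). All of these steps check out.
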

\begin{proof}
All of these assertions are directly proved by Theorem \ref{Theorem1.1}.
\end{proof}

\begin{theorem}\label{Theorem8.0}
Let $n=\prod_{k=1}^t p_k$ such that  each $p_k\equiv 1\pmod 4$,  $b$ an even integer satisfying $b^2+1=\lambda {n^2}$ for some $\lambda\in \mathbb{Z}$.  Let $\sigma=\begin{pmatrix}
2b&-\lambda\\
4n^2&-2b
\end{pmatrix}
$ be an Atkin-Lehner involution. Then we have 
\[
\theta_{\chi_n}(\sigma(\tau))=\begin{cases}
(1-i)\sqrt{n^2\tau-{b}/{2}} \theta_{\chi_n}(\tau),&\ \text{if }\ 
n\equiv 1\pmod 8;  \\
(i-1)\sqrt{n^2\tau-{b}/{2}}\theta_{\chi_n}(\tau),&\ \text{if }\ 
n\equiv 5\pmod 8,
\end{cases}
\]
where the square root takes the principal branch of the square root function. 
\end{theorem}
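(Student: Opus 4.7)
The plan is to exploit the classical decomposition of $\theta_{\chi_n}$ into theta functions with characteristics and reduce the action of $\sigma$ on $\tau$ to an explicit element of $SL_2(\mathbb{Z})$ acting on $\tau_1:=2n^2\tau$, where the classical transformation laws apply directly.

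First I would expand, by grouping $k$ modulo $n$ in $\sum_k\chi_n(k)e^{2\pi ik^2\tau}$,
\[
\theta_{\chi_n}(\tau)=\sum_{r\in(\mathbb{Z}/n\mathbb{Z})^\times}\chi_n(r)\,\theta_{[r/n,0]}(2n^2\tau),
\]
with $\theta_{[\mu,\nu]}(\tau)=\sum_{m\in\mathbb{Z}}e^{\pi i(m+\mu)^2\tau+2\pi i(m+\mu)\nu}$. Using $\lambda n^2=b^2+1$, a short algebraic manipulation gives $2n^2\sigma(\tau)=b-\frac{1}{2n^2\tau-b}=(T^{b}ST^{-b})(2n^2\tau)$, so $2n^2\sigma\tau=\gamma\tau_1$ with $\gamma=T^{b}ST^{-b}=\bigl(\begin{smallmatrix} b & -\lambda n^2\\ 1 & -b\end{smallmatrix}\bigr)\in SL_2(\mathbb{Z})$.

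The core step is to compute $\theta_{[r/n,0]}(\gamma\tau_1)$ by composing the elementary transformations $\theta_{[\mu,\nu]}(\tau\pm b)=e^{\pm\pi ib\mu^2}\theta_{[\mu,\nu\pm b\mu]}(\tau)$ (valid for $b$ even, since then $e^{\pi ibm^2}=1$ and $e^{2\pi ibr}=1$ for integers $m,r$) with the Jacobi $S$-formula $\theta_{[\mu,\nu]}(-1/\tau)=\sqrt{-i\tau}\,e^{2\pi i\mu\nu}\theta_{[-\nu,\mu]}(\tau)$ (standard Poisson summation). I expect the several half-integer phases produced along the chain $T^{-b}$, $S$, $T^{b}$—in particular a factor $e^{\pi ib^3r^2/n^2}$—to telescope after writing $b^3=b(\lambda n^2-1)$ and invoking $e^{\pi ib\lambda r^2}=1$, resulting in the clean identity
\[
\theta_{[r/n,0]}(\gamma\tau_1)=\sqrt{-i(2n^2\tau-b)}\,\theta_{[-br/n,0]}(\tau_1).
\]

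Summing this identity against $\chi_n(r)$ and reindexing $r'\equiv-br\pmod n$ (a bijection of $(\mathbb{Z}/n\mathbb{Z})^\times$ since $\gcd(b,n)=1$, with $\chi_n(r)=\chi_n(b)\chi_n(r')$ because $b^{-1}\equiv-b\pmod n$) then yields
\[
\theta_{\chi_n}(\sigma\tau)=\chi_n(b)\,\sqrt{-i(2n^2\tau-b)}\,\theta_{\chi_n}(\tau).
\]
Writing $\sqrt{-i(2n^2\tau-b)}=\sqrt{-2i}\,\sqrt{n^2\tau-b/2}=(1-i)\sqrt{n^2\tau-b/2}$ in the principal branch, and evaluating $\chi_n(b)$ prime-by-prime via Euler's criterion—for each $p_i\mid n$, $\bigl(\tfrac{b}{p_i}\bigr)=(b^2)^{(p_i-1)/4}=(-1)^{(p_i-1)/4}$—gives $\chi_n(b)=(-1)^{\#\{i:\,p_i\equiv 5\pmod 8\}}$, which equals $+1$ iff $n\equiv 1\pmod 8$ and $-1$ iff $n\equiv 5\pmod 8$, producing the two cases of the theorem. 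The main obstacle I anticipate is the careful bookkeeping of the half-integer phases along the triple composition $T^{b}ST^{-b}$; the telescoping to the clean automorphy factor $\sqrt{-i(2n^2\tau-b)}$ depends essentially on both $b$ being even and $b^2+1$ being divisible by $n^2$ (not merely by $n$).
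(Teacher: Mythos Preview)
Your proposal is correct and follows the same overall strategy as the paper: decompose $\theta_{\chi_n}$ into theta constants with characteristics, reduce the action of $\sigma$ to an $SL_2(\mathbb{Z})$ element on a rescaled variable, observe that this element permutes the characteristics by multiplication by $\pm b$, and finish by computing $\chi_n(b)$ prime-by-prime via $(b^2)^{(p_i-1)/4}\equiv(-1)^{(p_i-1)/4}$. The paper rescales by $2n$, works with $\gamma=\bigl(\begin{smallmatrix} b & -\lambda n\\ n & -b\end{smallmatrix}\bigr)$, and invokes Proposition~2.2 of Bengoechea for the transformation law; you instead rescale by $2n^2$ so that the matrix becomes $T^bST^{-b}$, which lets you derive the transformation directly from the elementary $T$- and $S$-formulas rather than citing an external result---a small gain in self-containedness at the cost of the phase bookkeeping you anticipate. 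One minor slip to fix when you write it up: for $b$ even the $T$-shift identity should read $\theta_{[\mu,\nu]}(\tau\pm b)=e^{\mp\pi ib\mu^2}\theta_{[\mu,\nu\pm b\mu]}(\tau)$ (your exponent sign is reversed); with the correct sign the three phases combine to $e^{\pi i b r^2(1+b^2)/n^2}=e^{\pi i b r^2\lambda}=1$ and the clean automorphy factor $\sqrt{-i(2n^2\tau-b)}$ drops out exactly as you claim.
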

\begin{proof}Let  $\gamma=\begin{pmatrix}
b&-\lambda n\\
n&-b
\end{pmatrix}
$.  Then $\sigma(\tau)=\gamma({2n\tau})/2n$. 
Let $$\theta_{n,h}(\tau)=\sum_{m\in\mathbb Z} e^{2\pi i(mn+h)^2\tau}, \quad h=1,2,\cdots, {n-1}.$$
Then $\theta_{\chi_n}(\tau)=\sum_{h=1}^{{n-1}}\chi(h)\theta_{n,h}(\tau)$.  By Proposition 2.2 of \cite{B}, the action of $\gamma$  sends the subscript $h$ in the theta function $\theta_{n,h}$ to $bh$. When $n \equiv 1 \pmod{8}$, we have $\chi_n(b) = 1$; whereas when $n \equiv 5 \pmod{8}$, we have $\chi_n(b) = -1$.
By carefully computing the coefficients, one can prove
the theorem. 
\end{proof}

This theorem was first pointed out, along with an outline of its proof, by Noam Elkies in his MathOverflow answer to the question “CM zeros of unary theta series” posed by Henri Cohen \cite{Cohen}.  However, the subtle distinction between the cases $n \equiv 5\pmod 8$ and $n \equiv 1\pmod 8$  appears to have gone unnoticed.

\begin{corollary}\label{Corollary8.0}
Let $n=\prod_{k=1}^t p_k$ such that  each $p_k\equiv 1\pmod 4$,  $\tau_n=\frac{b+i}{2n^2}$, where $b$ is an even integer satisfying $b^2+1=\lambda {n^2}$ for some $\lambda\in \mathbb{Z}$.  
Then $\tau_n$ is a zero of $\theta_{\chi_n}(\tau)$ if $n\equiv 5\pmod 8 $.  If $n\equiv 1\pmod 8 $, then the vanishing order 
of  $\theta_{\chi_n}(\tau)$  at $\tau_n$ is $0$ or  at least $2$.  
\end{corollary}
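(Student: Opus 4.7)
The plan is to apply Theorem~\ref{Theorem8.0} at the fixed point $\tau_n$ of the Atkin--Lehner involution $\sigma$, and when the resulting functional equation is degenerate, to pass to its derivative. First I would verify that $\sigma(\tau_n)=\tau_n$: a direct substitution gives numerator $2b\tau_n-\lambda=(bi-1)/n^2$ and denominator $4n^2\tau_n-2b=2i$, whose ratio returns $\tau_n$ once one uses the defining relation $b^2+1=\lambda n^2$. In parallel, $n^2\tau_n-b/2=i/2$, so the principal branch yields $\sqrt{n^2\tau_n-b/2}=(1+i)/2$. Consequently the multiplier in Theorem~\ref{Theorem8.0} evaluates at $\tau_n$ to
\begin{equation*}
(1-i)\cdot\frac{1+i}{2}=1 \quad\text{if } n\equiv 1\pmod 8, \qquad (i-1)\cdot\frac{1+i}{2}=-1 \quad\text{if } n\equiv 5\pmod 8.
\end{equation*}

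For part~(1), substituting $\tau=\tau_n$ into Theorem~\ref{Theorem8.0} for $n\equiv 5\pmod 8$ immediately produces $\theta_{\chi_n}(\tau_n)=-\theta_{\chi_n}(\tau_n)$, which forces $\theta_{\chi_n}(\tau_n)=0$.

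For part~(2), with $n\equiv 1\pmod 8$ the identity at $\tau_n$ reduces to a tautology, so the substantive step is to differentiate the functional equation
\begin{equation*}
\theta_{\chi_n}(\sigma\tau)=(1-i)\sqrt{n^2\tau-b/2}\,\theta_{\chi_n}(\tau)
\end{equation*}
in $\tau$ and evaluate at $\tau_n$. The term carrying the derivative of the multiplier is multiplied by $\theta_{\chi_n}(\tau_n)$ and therefore drops out under the assumption that $\theta_{\chi_n}(\tau_n)=0$, leaving only the chain-rule contribution. A short Möbius computation gives $\sigma'(\tau_n)=4/(4n^2\tau_n-2b)^2=4/(2i)^2=-1$, and combined with the multiplier value $1$ at $\tau_n$ the differentiated identity collapses to $-\theta_{\chi_n}'(\tau_n)=\theta_{\chi_n}'(\tau_n)$, so $\theta_{\chi_n}'(\tau_n)=0$. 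Thus the order of vanishing is either $0$ or at least~$2$.

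The only delicate point is the principal-branch computation for $\sqrt{i/2}$ together with the observation that $\sigma'(\tau_n)=-1\neq 1$; this inequality is exactly what allows the derivative argument in part~(2) to produce vanishing of $\theta_{\chi_n}'(\tau_n)$ rather than no information. Once those two elementary calculations are in hand, the corollary follows directly from Theorem~\ref{Theorem8.0} without invoking any further modular-form machinery.
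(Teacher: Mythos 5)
Your proposal is correct and follows essentially the same route as the paper: evaluate the Atkin--Lehner functional equation at the fixed point $\tau_n$, where the multiplier is $\pm 1$ according to $n\bmod 8$, giving part (1); then for part (2) compare first-order terms using $\sigma'(\tau_n)=-1$, which is exactly the paper's Taylor-coefficient computation ($a_1=-a_1$) phrased as differentiating the functional equation.
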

\begin{proof}
Note that $\sigma(\tau_n)=\tau_n$. By Theorem ~\ref{Theorem8.0}, we have 
\[
\theta_{\chi_n}(\tau_n)=\begin{cases}
 \theta_{\chi_n}(\tau_n),&\ \text{if }\ 
n\equiv 1\pmod 8;  \\
-\theta_{\chi_n}(\tau_n),&\ \text{if }\ 
n\equiv 5\pmod 8.
\end{cases}
\]
Hence we have  $\theta_{\chi_n}(\tau_n)=0$ if $n\equiv 5\pmod 8 $.
Assume that  we have  the Taylor expansion $\theta_{\chi_n}(\tau)=a_0+a_1(\tau-\tau_n)+\cdots$. Then 
$\theta_{\chi_n}(\sigma(\tau))=a_0-{a_1}(\tau-\tau_n)+\cdots$ and $ (1-i)\sqrt{n^2\tau-{b}/{2}} \theta_{\chi_n}(\tau) =a_0+(a_1-in^2a_0)(\tau-\tau_n)+\cdots$. 
 If $n\equiv 1\pmod 8 $ and $a_0=0$, then we have $a_1=-a_1$
  which implies that $a_1=0$, i.e., 
  the vanishing order 
of  $\theta_{\chi_n}(\tau)$  at $\tau_n$ is  at least $2$.  
\end{proof}

We also did   numerical computation on the order of $\theta_{\chi_n}(\tau)$ vanishing at $\tau_n$.  It seems that  $\tau_n$ is always a simple zero of $\theta_{\chi_n}$ if $n\equiv 5\pmod 8 $, and a double zero if $n\equiv 1\pmod 8 $ and $n$ is congruent number.    We have verified that for  all prime number $n<10^4$ such that $n\equiv 1\pmod 8$ and $n$ is a congruent number and found that $\tau_n$ is always a double zero of $\theta_{\chi_n}$.

By applying Theorem \ref{Theorem1.1}, we obtain an effective method to compute the order of the Tate–Shafarevich group and locate the zeros of $\theta_{\chi_n}$ via the following proposition.
\begin{proposition}\label{prop8.4}
    Let $n$ and $\tau_n$ be as in Theorem \ref{Theorem1.1}.  
If $L(E_n,1)\neq 0$, then 
\[
{\#\Sha(E_n)}=\begin{cases}\displaystyle
\frac{\pi|\theta_{\chi_{n}}\left(\tau_n\right)|^2}{\sqrt{2n}\varpi\sigma_0^2(n)},&\ \text{if}\ n \ \text{is odd};\\
\displaystyle\frac{\sqrt{2}\pi\left|\theta_{\chi_{n/2}}\left(\tau'_{n/2}\right)\right|^2}{\sqrt{n}\varpi\sigma_0^2(n/2)},&\ \text{if}\ n \ \text{is even},
\end{cases}
\]
where $\sigma_0(n)$ denotes the number of divisors of $n$. In particular, if $L(E_n,1)\neq 0$, then  
\[
\begin{aligned}
    |\theta_{\chi_{n}}\left(\tau_n\right)|&\geq {{(2n)^{\frac{1}{4 }}}\varpi^{\frac{1}{2}}\pi^{-\frac{1}{2}}}\sigma_0(n),& \ &\text{if}\ n\ \text{is odd};\\
|\theta_{\chi_{n}}\left(\tau'_{n/2}\right)|&\geq {{(n/2)^{\frac{1}{4 }}}\varpi^{\frac{1}{2}}\pi^{-\frac{1}{2}}}\sigma_0(n/2),& \ &\text{if}\ n\ \text{is even}. 
\end{aligned}
\]
\end{proposition}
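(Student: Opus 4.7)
The plan is to combine Theorem~\ref{Theorem1.1} with the Birch--Swinnerton-Dyer (BSD) formula at analytic rank~$0$, which for the CM elliptic curves $E_{n}$ is established (in the $L(E_{n},1)\neq0$ case) by the classical work of Coates--Wiles and Rubin, supplemented by more recent refinements giving the full $2$-part in the congruent number setting. Thus, granting BSD, one has
$$
L(E_{n},1)=\frac{\Omega_{E_{n}}\cdot\#\Sha(E_{n})\cdot\prod_{p}c_{p}}{|E_{n}(\mathbb{Q})_{\mathrm{tors}}|^{2}}
$$
whenever $L(E_{n},1)\neq0$, so that $\#\Sha(E_{n})$ is determined once the remaining invariants are computed.

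Next, I would collect the standard arithmetic data for $E_{n}:y^{2}=x^{3}-n^{2}x$. The rational torsion is $E_{n}(\mathbb{Q})_{\mathrm{tors}}=\{O,(0,0),(\pm n,0)\}\cong(\mathbb{Z}/2\mathbb{Z})^{2}$, contributing a factor of $16$. The real period is $\Omega_{E_{n}}=2\varpi/\sqrt{n}$, which follows from $\Omega_{E_{1}}=2\varpi$ (a direct integration, using the very definition of $\varpi$) combined with the change of variables $(x,y)\mapsto(nx,n^{3/2}y)$ that identifies $E_{n}$ with $E_{1}$ and scales the period by $1/\sqrt{n}$. Running Tate's algorithm at each bad prime produces the local Tamagawa numbers: at the dyadic prime, $c_{2}=2$ for odd $n$ and $c_{2}=4$ for $n=2m$; at an odd prime $p\mid n$ (necessarily $p\equiv1\pmod{4}$ under our hypothesis), the reduction is of Kodaira type $I_{0}^{*}$ and the auxiliary cubic splits completely over $\mathbb{F}_{p}$, giving $c_{p}=4$. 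Since $n$ is square-free with $\sigma_{0}(n)=2^{\omega(n)}$, these assemble into $\prod_{p}c_{p}=2\sigma_{0}(n)^{2}$ for odd $n$, and $\prod_{p}c_{p}=4\sigma_{0}(n/2)^{2}$ for $n=2m$.

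Substituting the formula from Theorem~\ref{Theorem1.1} for $L(E_{n},1)$, together with the values above, into the BSD formula and solving for $\#\Sha(E_{n})$, a short algebraic simplification produces the two asserted expressions. For the lower bounds, the key observation is that $\#\Sha(E_{n})$, being a finite positive integer whenever $L(E_{n},1)\neq 0$ (finiteness for CM elliptic curves in the analytic rank~$0$ case is due to Rubin), satisfies $\#\Sha(E_{n})\geq 1$; rearranging the $\#\Sha$ formula and taking square roots then delivers the claimed inequalities on $|\theta_{\chi_{n}}(\tau_{n})|$ and $|\theta_{\chi_{n/2}}(\tau_{n/2}')|$.

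The main obstacle in this plan is essentially bookkeeping rather than a conceptual difficulty: verifying the Tamagawa numbers requires care at the dyadic prime, where the reduction behavior depends subtly on the parity of $n$ and the residue of $n$ modulo~$8$, and one must ensure that the $2$-part of BSD holds precisely (not merely up to a $2$-adic unit), since the exact integer value of $\#\Sha(E_{n})$ is what is being asserted. This is why the proposition presents numerical identities rather than identities up to a $2$-power unit.
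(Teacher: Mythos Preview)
Your approach is correct and essentially the same as the paper's. The paper's proof is a single line invoking Theorem~\ref{Theorem1.1} together with Theorem~3 of Tunnell~\cite{Tunnell}, and Tunnell's Theorem~3 is precisely the packaged BSD computation for $E_{n}$ that you unpack by hand: the torsion $(\mathbb{Z}/2\mathbb{Z})^{2}$, the real period $2\varpi/\sqrt{n}$, and the Tamagawa product $2\sigma_{0}(n)^{2}$ (odd~$n$) or $4\sigma_{0}(n/2)^{2}$ (even~$n$), after which solving for $\#\Sha(E_{n})$ and using $\#\Sha(E_{n})\geq1$ gives the stated inequalities. Your caveat about the exact $2$-part of BSD is well taken; the paper handles this simply by citing Tunnell (and, implicitly, the subsequent literature such as Rubin's work and \cite{Qin}) rather than reproving it.
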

\begin{proof}
This follows directly from  Theorem \ref{Theorem1.1} and Theorem 3 of \cite{Tunnell}.
\end{proof}
Therefore, if for some odd \(n\) one computes $$|\theta_{\chi_{n}}\left(\tau_n\right)|<{{(2n)^{\frac{1}{4 }}}\varpi^{\frac{1}{2}}\pi^{-\frac{1}{2}}}\sigma_0(n),$$  then necessarily $L(E_n,1)=0$. This provides an effective and computationally accessible criterion for finding congruent numbers and  mock Heegner zeros of theta functions.

\end{document}